\DeclareSymbolFont{rsfs}{U}{rsfs}{m}{n}
\DeclareSymbolFontAlphabet{\mathscrsfs}{rsfs}
\newtheorem{theorem}{Theorem}[section]
\newtheorem{lemma}[theorem]{Lemma}
\newtheorem{proposition}[theorem]{Proposition}
\newtheorem{corollary}[theorem]{Corollary}
\theoremstyle{definition}
\newtheorem{definition}{Definition}
\newtheorem{remark}[theorem]{Remark}
\numberwithin{equation}{section}
\definecolor{bluegray}{rgb}{0.1, 0.1, 0.7}
\newcommand{\revv}[1]{{#1}}
\newcommand{\bea}{\begin{eqnarray}}
\newcommand{\eea}{\end{eqnarray}}
\newcommand{\<}{\langle}
\renewcommand{\>}{\rangle}
\newcommand{\wt}{\widetilde}
\newcommand{\op}{\text{op}}
\newcommand{\wh}{\widehat}
\def\bU{{\boldsymbol U}}
\def\Ann{{\rm Ann}}
\def\iid{\text{i.i.d.~}}
\def\sTV{\mbox{\tiny \rm TV}}
\def\Proj{{\sf P}}
\def\eps{{\varepsilon}}
\def\id{{\boldsymbol{I}}}
\def\cuP{\mathscrsfs{P}}
\def\ubeta{\underline{\beta}}
\def\bsigma{{\boldsymbol{\sigma}}}
\def\bs{{\boldsymbol{s}}}
\def\hS{\widehat{S}}
\def\hbs{{\boldsymbol{\widehat{s}}}}
\def\bW{{\boldsymbol{W}}}
\def\bx{{\boldsymbol{x}}}
\def\cF{{\mathcal F}}
\def\cG{{\mathcal G}}
\def\cC{{\mathcal C}}
\def\cE{{\mathcal E}}
\def\op{\mbox{\tiny\rm op}}
\newcommand{\ubq}{\bar{q}}
\newcommand{\lbq}{\underline{q}}
\def\wtH{\wt{H}}
\def\wtZ{\wt{Z}}
\def\bsig{{\boldsymbol {\sigma}}}
\def\brho{{\boldsymbol \rho}}
\def\by{{\boldsymbol y}}
\def\reals{{\mathbb R}}
\def\sT{{\sf T}}
\def\bv{{\boldsymbol{v}}}
\def\bx{{\boldsymbol{x}}}
\def\bB{\boldsymbol{B}}
\def\hbx{\boldsymbol{\hat{x}}}
\def\hby{\boldsymbol{\hat{y}}}
\def\Par{{\sf P}}
\def\de{{\rm d}}
\def\bX{\boldsymbol{X}}
\def\bY{\boldsymbol{Y}}
\def\bW{\boldsymbol{W}}
\def\prob{{\mathbb P}}
\def\<{\langle}
\def\>{\rangle}
\def\Tr{{\sf Tr}}
\def\Ball{{\sf B}}
\def\diam{{\rm diam}}
\def\cN{{\cal N}}
\def\cL{{\cal L}}
\def\by{{\boldsymbol{y}}}
\def\P{\mathbb{P}}
\def\cE{{\mathcal E}}
\def\cD{{\cal D}}
\def\bu{{\boldsymbol{u}}}
\def\b0{{\boldsymbol{0}}}
\def\sRS{\mbox{\tiny\rm RS}}
\def\sreg{\mbox{\tiny\rm reg}}
\def\obW{\overline{\boldsymbol W}}
\newcommand{\pl}{{\mbox{\rm\footnotesize pl}}}
\newcommand{\rd}{{\mbox{\rm\footnotesize rd}}}
\def\bfone{{\boldsymbol 1}}
\def\bG{{\boldsymbol G}}
\DeclareMathOperator*{\plim}{p-lim}
\def\cD{{{\mathcal D}}}
\def\cI{{\mathcal I}}
\def\cS{{\mathcal S}}
\def\sep{\mathrm{sep}}
\renewcommand{\b}{\mathbf{b}}
\def\lt{\left}
\def\rt{\right}
\def\la{\langle}
\def\ra{\rangle}
\def\eps{\varepsilon}
\def\bbE{{\mathbb{E}}}
\def\bbP{{\mathbb{P}}}
\def\bbR{{\mathbb{R}}}
\def\cF{{\mathcal{F}}}
\def\cN{{\mathcal{N}}}
\def\cP{{\mathcal{P}}}
\def\same{{\mathrm{same}}}
\def\Leb{{\mathrm{Leb}}}
\DeclareMathOperator*{\E}{\bbE}
\def\sph{\mathrm{sp}}
\def\bfone{{\boldsymbol 1}}
\author{Ahmed El Alaoui \and Andrea Montanari \and Mark Sellke}
\title{Shattering in Pure Spherical Spin Glasses}
\date{}
\begin{document}

\maketitle

\begin{abstract}
    We prove the existence of a shattered phase within the replica-symmetric phase of the pure spherical $p$-spin models for $p$ sufficiently large.
    In this phase, we construct a decomposition of the sphere into well-separated small clusters, each of which has exponentially small Gibbs mass, yet which together carry all but an exponentially small fraction of the Gibbs mass.
    We achieve this via quantitative estimates on the derivative of the Franz--Parisi potential, which measures the Gibbs mass profile around a typical sample. Corollaries on dynamics are derived, in particular we show the two-times correlation function of stationary Langevin dynamics must have an exponentially long plateau. We further show that shattering implies disorder chaos for the Gibbs measure in the optimal transport sense; this is known to imply failure of sampling algorithms which are stable under perturbation in the same metric.  
\end{abstract}



{\small
\tableofcontents
}

\section{Introduction}

Fix an integer $p\geq 2$ and let $\bG\in \lt(\bbR^N\rt)^{\otimes p}$ be an order $p$ tensor with \iid standard Gaussian entries $g_{i_1,\dots,i_p}\sim \cN(0,1)$. The pure $p$-spin Hamiltonian $H_N:\bbR^N\to\bbR$ is the random homogeneous polynomial
\begin{equation}
    \label{eq:def-hamiltonian}
    H_N(\bsig) = \frac{1}{N^{(p-1)/2}} \la \bG, \bsig^{\otimes p} \ra
    =
    \frac{1}{N^{(p-1)/2}}
    \sum_{i_1,\dots,i_p=1}^N g_{i_1,\dots,i_p}\sigma_{i_1}\dots\sigma_{i_p}.
\end{equation}

Consider the spherical Gibbs measure $\de \mu_{\beta}(\bx)\propto e^{\beta H_N(\bx)}~\,\de \mu_0(\bx)$, 
where $\de \mu_0(\bx)$ denotes the uniform measure on 
$\cS_N \equiv \{\bsig \in \bbR^N : \sum_{i=1}^N \bsig_i^2 = N\}$. It is now mathematically 
well established that $\mu_{\beta}$ undergoes a \emph{static} phase transition
 from replica symmetric to $1$-RSB at a value $\beta_c(p)$ given by
\begin{align}
 \beta_c^2(p)
    =
    \inf_{q\in [0,1]}
    \lt(\frac{1}{q^p}\log\lt(\frac{1}{1-q}\rt)-\frac{1}{q^{p-1}}\rt)\, .
    \label{eq:PspinExplicit}
\end{align}
(We refer to \cite{chen2013aizenman} which establishes 
the Parisi formula for a general mixed p-spin spherical model, and
to Appendix \ref{app:StaticTemperature} which derives the explicit formula \eqref{eq:PspinExplicit}.)

However another \emph{dynamical} (ergodicity breaking) transition has long been expected
at
\begin{equation}
\beta_d(p)=\sqrt{\frac{(p-1)^{p-1}}{p(p-2)^{p-2}}}<\beta_c(p)\, .\label{eq:FormulaBetaD}
\end{equation}
\revv{ Crisanti, Horner, Sommers  \cite{crisanti1993spherical} were the first to identify
this phase transition by studying the Langevin dynamics for this model (Langevin
dynamics is a diffusion on $\cS_N$ that is reversible with respect to $\mu_{\beta}$,
see definitions below). 
Define the correlation function $C(t) = \lim_{N \to \infty} \langle\bsig_0 , \bsig_t\rangle /N$ (assuming the limit exists) where $\bsig_0 \sim \mu_{\beta}$ and $\bsig_t$ follows Langevin dynamics initialized at $\bsig_0$ and let $q = \lim_{t \to \infty} C(t)$. 
These authors derived heuristically an integral-differential equation for the function
$C(t)$ (`dynamical mean-field theory' equation). By a heuristic
analysis of this equation, they showed that $q>0$ for $\beta>\beta_d(p)$
and  $q=0$ for $\beta<\beta_d(p)$. 
In other words, for $\beta>\beta_d(p)$, $\bsig_t$ retains long term memory of the initialization (meanwhile for $\beta \in (\beta_c(d),\beta_d(d))$,
$\<\bsig,\bsig'\>/N =o_N(1)$ for two independent configurations $(\bsig,\bsig')\sim 
\mu_{\beta}^{\otimes 2}$). 
We refer to \cite{cugliandolo1993analytical,cugliandolo1994out,cugliandolo2004course,ben2006cugliandolo}
for further physics background on this phase transition.
(Appendix~\ref{app:StaticTemperature} provides further details on 
formula \eqref{eq:FormulaBetaD}, and its generalization 
to mixed $p$-spin models.)}

For $\beta\in (\beta_d,\beta_c)$, $\mu_{\beta}$ is expected to exhibit \emph{shattering}: the measure  puts nearly all of its mass on well separated ``clusters'', each carrying an exponentially small fraction of the total mass. 
\revv{The connection between the dynamical phase transition discussed
above and the shattering phase transition was identified early on in the physics
literature, via the analysis of the solutions of TAP equations \cite{kurchan1993barriers,crisanti1995thouless},
and the introduction of the Franz-Parisi potential \cite{franz1995recipes}.
When shattering takes place, the heuristic argument goes, equilibrium dynamics is
initialized within one of the clusters, and remains confined to it for an exponentially 
large time. The plateau in the correlation function at $q$ corresponds to the typical 
overlap between two configurations in the same cluster being $\<\bsig,\bsig'\>/N=q+o_N(1)$.}

Important progress towards making rigorous this scenario was achieved in \cite{arous2021shattering}. For $p\geq 4$
 and $\beta\in (\beta_d,\beta_c)$ close enough to $\beta_c$, \cite[Theorem 2.4]{arous2021shattering} 
 gives a decomposition where the clusters are centered around the TAP-states
  (critical points of the TAP free energy), and  which attains the full free energy of the model, 
  i.e., the decomposition covers a $e^{-o(N)}$ fraction of the Gibbs mass.
  \revv{While such a shattering result is used to prove exponentially large relaxation
  time, it is not strong enough to imply a plateau in the equilibrium correlation function
  (because the initial configuration $\bsig_0$ is not guaranteed to be in one of the clusters.)}

Note that for all $p\geq 3$ one easily verifies 
\begin{equation}
\label{eq:beta-d-bounded}
    1 < \beta_d(p) < 2\, .
\end{equation}
Further for $p$ large we have
\begin{equation}
\beta_d(p) =\sqrt{e}+O(1/p)\, ,\;\;\;\;\; \beta_c(p) = \sqrt{\log p}\cdot
(1+o_p(1))\, ,
\end{equation}
so that the shattered phase is conjectured to occupy a diverging
interval of temperatures for $p$ large. 

In this paper we prove that 
shattering indeed occurs for all $p$ large enough in a interval $\beta\in (\ubeta(p),\beta_c(p))$,
where $\ubeta(p)=\Theta(1)$ for large $p$. Further, we establish that shattering holds in a stronger sense
than \cite{arous2021shattering}: our decomposition covers at least $1-\exp(-\Theta(N))$
of the Gibbs measure. This has immediate `observable' consequences on the behavior
of Langevin dynamics at equilibrium. \revv{Crucially, it implies the plateau in the correlation function $C(t)$ conjectured in the physics literature.}

Our approach to shattering is direct and inspired by
a technique developed to prove  shattering in
random constraint satisfaction problems
in theoretical computer science 
\cite{achlioptas2008algorithmic}. 
We show that for a typical Gibbs sample $\bsig\sim\mu_{\beta}$, there exist radii 
$\Delta_1\ll \Delta_2\ll 1$ such that nearly all of the Gibbs mass in
 $B_{\Delta_2}(\bsig)$ lies in $B_{\Delta_1}(\bsig)$ (the balls are with respect 
 to the normalized $\ell_2$ distance). 
A technical challenge lies in the fact 
that the centers of these balls have a distribution 
that depends on $\bG$ in a complex manner.
We circumvent this difficulty via a contiguity argument which allows us to switch to a
 simpler \emph{planted distribution} in which $\bsig$ is drawn from the uniform measure on the
  sphere. This technique is relatively standard and has been used in~\cite{achlioptas2008algorithmic} 
  in a zero temperature setting for random constraint satisfaction problems. 
Then, precise control of certain free energy derivatives is obtained via the Parisi formula.

\section{Main results}

\subsection{Shattering theorems}

We will present two theorems establishing shattering in the spherical 
$p$-spin model. The first holds under an explicit condition of the form
$\beta\in (\ubeta(p),\beta_c(p))$ and implies a decomposition of the Gibbs measure into states that take
the form of spherical caps. The second theorem holds under a certain condition 
on the so-called Franz--Parisi potential \cite{franz1995recipes}, which we expect to be sharp although not always easy to verify. 
The latter condition is actually more general (i.e., the second theorem encompasses the first), except that the states may become somewhat more complicated than spherical caps.

We begin by introducing some useful terminology. We endow $\cS_{N}$ with the Euclidean distance
$d(\bsig_1,\bsig_2):=\|\bsig_1-\bsig_2\|_2$, and will occasionally use
$\|\bx\|_N:=\|\bx\|_2/\sqrt{N}$ for the normalized $\ell_2$ norm of $\bx\in\reals^N$. 
As usual, for sets $S_1,S_2\subset \cS_N$, we write
$d(S_1,S_2)=\inf\limits_{\bsig_1\in S_1,\bsig_2\in S_2}\|\bsig_1-\bsig_2\|_2$,
as well as $d(\bsig_1,S_2):= d(\{\bsig_1\},S_2)$ for $\bsig_1\in\cS_N$.
Finally, we define the $r$-neighborhood of $S\subseteq \cS_{N} $ by
$\Ball_{r}(S) = \{\bsig \in \cS_N \,:\, d(\bsig,S) \le r\sqrt{N}\}$ and write 
$\Ball_{r}(\bsig)=\Ball_r(\{\bsig\})$.
%
\begin{definition}\label{def:Shattering}
We say the sets (clusters) $\{\cC_1,\dots,\cC_M\}$ with $\cC_m\subseteq\cS_N$ are a \textbf{shattering decomposition} 
of the probability measure $\mu_{\beta}$, with parameters $(c,r,r_b,s)$, if the following hold:
 \begin{enumerate}[label={\sf S\arabic*},ref={\sf S\arabic*}]
        \item
        \label{it:small-prob}
        Each cluster has exponentially small probability:
        \[
        \max_{1\leq m\leq M}\,\, \mu_{\beta}(\cC_m)\leq e^{-cN}\, .
        \]
        \item 
        \label{it:small-diam}
        Each cluster is geometrically small:
        \[
        \diam(\cC_m)\leq r \sqrt{N} \quad\forall~m\in [M]\, .
        \]
         \item 
        \label{it:clusters-isolated}
        Each cluster is surrounded by a bottleneck:
        \[
        \mu_{\beta}(\cC_m)\geq (1-e^{-cN})\mu_{\beta}\big(\Ball_{r_b}(\cC_m)\big) \, .
        \]
        \item 
        \label{it:clusters-separate}
        Distinct clusters are well-separated: 
        \[
        \min_{1\leq m_1<m_2\leq M} d\big(\cC_{m_1},\cC_{m_2}\big)\geq s \sqrt{N} \, .
        \]
        \item 
        \label{it:clusters-cover}
        Collectively, the clusters carry nearly all of the Gibbs mass:
        \[
        \mu_{\beta}\lt(
        \bigcup\limits_{m=1}^M \cC_m
        \rt)
        \geq 1-e^{-cN}\, .
        \] 
        \end{enumerate} 
\end{definition}
We are now in position to state the first theorem.
\begin{theorem}
\label{thm:main}
    For $p$ large enough, there exists a non-empty interval 
    $I_p=(\ubeta(p),\beta_c(p))\subseteq (\beta_d(p),\beta_c(p))$, with $\ubeta(p) = C$
 independent of $p$, such that for any $\beta\in I_p=(\ubeta(p),\beta_c(p))$, there exist $\Delta_1,\Delta_2,c>0$ with 
    $\Delta_2\geq \Omega(\sqrt{\beta}\Delta_1)$ such that the following holds.
    
    With probability $1-e^{-cN}$, there exists a finite collection
     $\lt\{\cC_1,\dots,\cC_M\rt\}$ of spherical caps  $\cC_m\subseteq\cS_N$ depending on $\bG$ 
      that form a shattering decomposition 
of the probability  measure $\mu_{\beta}$, with parameters $(c,\Delta_1,\Delta_2,\Delta_2)$.

        Furthermore, if $p$ is even, we may assume the clustering is origin-symmetric, i.e., the
         antipodal set $-\cC_i$ of any cluster is also a cluster.
\end{theorem}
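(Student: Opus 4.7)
The plan is to follow the Achlioptas--Coja-Oghlan two-scale paradigm, adapted to the spherical model, and build the clusters as spherical caps $\cC_m=\Ball_{\Delta_1}(\bsig^{(m)})$ centered at typical Gibbs samples. The heart of the argument is an asymmetric concentration estimate: for $\mu_\beta$-typical $\bsig$ and suitable radii $\Delta_1\ll \Delta_2=\Omega(\sqrt{\beta})\Delta_1$, one has
\[
\mu_\beta\bigl(\Ball_{\Delta_2}(\bsig)\bigr) \;\le\; (1+e^{-cN})\,\mu_\beta\bigl(\Ball_{\Delta_1}(\bsig)\bigr).
\]
Equivalently, the Franz--Parisi potential $\Phi(q)=\lim_N N^{-1}\bbE_{\bsig\sim\mu_\beta}\log\mu_\beta\{\bsig':\la\bsig,\bsig'\ra/N\approx q\}$ should have a strict local maximum at some overlap $q_1$ (corresponding to distance $\Delta_1$), with value strictly above $\Phi$ at a smaller overlap $q_2$ (corresponding to $\Delta_2$), so the mass at scale $\Delta_2$ is entirely explained by mass at scale $\Delta_1$ up to an exponentially small factor.

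\textbf{Reduction to a planted model.} Because $\bsig\sim\mu_\beta$ depends on the disorder $\bG$ in a complicated way, I would pass to the planted ensemble in which $\bsig$ is drawn uniformly on $\cS_N$ and $\de\bbP_{\rm pl}(\bG\mid\bsig)\propto e^{\beta H_N(\bsig)}\de\bbP(\bG)$. Since $\bbE[e^{\beta H_N(\bsig)}]$ is independent of $\bsig$, this law is the joint Gibbs law of $(\bG,\bsig)$, and by Gaussian concentration of the free energy in the RS regime (which holds for $\beta<\beta_c$), any joint event of $\bbP_{\rm pl}$-probability $1-e^{-cN}$ transfers back to the Gibbs-sampling event with only sub-exponential loss. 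Under $\bbP_{\rm pl}$ the posterior law of $\bG$ given $\bsig$ is a Gaussian shifted by a rank-one-in-$\bsig^{\otimes p}$ mean, so conditional quantities like $\mu_\beta(\Ball_\Delta(\bsig))$ become constrained partition functions that are directly amenable to the Parisi machinery.

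\textbf{Parisi computation.} In the planted setting, the log-mass $N^{-1}\log \mu_\beta(\Ball_\Delta(\bsig))$ concentrates on the value of a constrained Parisi functional with overlap $\la\bsig,\bsig'\ra/N\ge 1-\Delta^2/2$ enforced between the fixed reference $\bsig$ and the dummy spin $\bsig'$. Since the unconstrained model is RS in the regime of interest, I would analyze the RS (and at worst 1-RSB) variational principle as an explicit function of the anchor overlap $q$, reducing to an elementary calculus problem of the form
\[
F(q)\;=\;\tfrac12\log(1-q^2)+\tfrac{\beta^2}{2}\bigl((1+q)^p - pq^{p-1}(1-q)-1\bigr)+\text{const},
\]
(schematically, up to Lagrange multipliers from the constraint). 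I would verify that for $p$ large and $\beta$ in a non-empty window $(\ubeta,\beta_c)$ with $\ubeta=O(1)$, $F$ has a strict local maximum at some $q_1=1-O(1/\beta)$ and a strictly smaller value at some $q_2$ bounded away from $q_1$, with gap $\Omega(1)$ uniformly. For large $p$ the monomials $q^p, q^{p-1}$ are negligible unless $q$ is very close to $1$, which cleanly separates the ``cluster interior'' scale from the ``inter-cluster'' scale.

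\textbf{From the two-scale estimate to the decomposition and main obstacle.} I would choose cluster centers greedily, repeatedly picking $\bsig^{(m)}$ maximizing $\mu_\beta(\Ball_{\Delta_1}(\cdot))$ among points not in $\bigcup_{m'<m}\Ball_{\Delta_2}(\bsig^{(m')})$, stopping when the uncovered mass drops below $e^{-cN}$. Property S2 is immediate from the radius; S4 follows from the greedy separation and $\Delta_1\ll\Delta_2$; S3 is exactly the two-scale estimate; S1 follows because each single ball mass $\mu_\beta(\Ball_{\Delta_1}(\bsig))$ is itself exponentially small (the constrained free energy at overlap $q_1$ is bounded away from the unconstrained one by $\Omega(1)$); and S5 follows from the stopping rule combined with the fact that $\mu_\beta$-typical $\bsig$ lie within $\Delta_1$ of some chosen center by the very two-scale estimate. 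The even-$p$ symmetry is handled by closing the family under $\cC_m\mapsto -\cC_m$, using $H_N(-\bsig)=H_N(\bsig)$ to see that $-\cC_m$ satisfies the same estimates. The main technical obstacle is the quantitative Parisi analysis in step three: one must show that the shattered window $(\ubeta(p),\beta_c(p))$ is non-empty with $\ubeta=O(1)$, uniformly in $p$, by tracking the RS critical-point equations and their behavior as $\beta\to\beta_c^-$, where a competing secondary maximum of the constrained potential appears. A secondary difficulty is controlling the contiguity-type transfer with enough precision that the $e^{-cN}$ fluctuation bounds survive the change of measure.
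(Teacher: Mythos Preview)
Your overall architecture---planted model contiguity, Franz--Parisi potential analysis, then a cluster construction---matches the paper. But there are two genuine gaps.

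First, the Parisi step is the heart of the argument and your plan underestimates it. The Franz--Parisi potential takes the form $\cF_\beta(q)=F_\beta(\xi_q)+\beta^2 q^p+\tfrac12\log(1-q^2)$, where $\xi_q(x)=(q^2+(1-q^2)x)^p-q^{2p}$ is a genuine \emph{mixed} model on the band orthogonal to $\bsig$. There is no reason this band model is replica-symmetric (or even 1-RSB) for the overlaps $q$ near $1$ that matter, so writing down an RS/1-RSB formula like the one you sketch and locating its critical points does not control the actual quenched quantity; an RS \emph{upper} bound on $\cF_\beta$ is useless for the lower bound on the derivative you need. What is actually required is $\frac{d}{dq}\cF_\beta(q)>0$ on an interval $[1-\tfrac{1}{2p},\,1-\tfrac{C}{\beta p}]$, and this comes from differentiating the Crisanti--Sommers functional in $q$ via the envelope theorem, yielding $\frac{d}{dq}F_\beta(\xi_q)=-\beta^2 pq\,\bbE_{x\sim\zeta_{\xi_q}}\big[(1-x)(q^2+(1-q^2)x)^{p-1}\big]$ with the \emph{unknown} Parisi measure $\zeta_{\xi_q}$ inside. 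The paper bounds this expectation indirectly: by convexity of $\xi_q$ it is dominated by $(1-q^2)^{-1}\beta^{-1}\frac{d}{d\beta}F_\beta(\xi_q)$, and the latter is at most the ground-state energy of $\xi_q$, which is $O(1)$ uniformly in $p$ once $q\ge 1-\tfrac{1}{2p}$ because the coefficients of $\xi_q$ then decay like $1/j!$. Without this chain you cannot get the uniform-in-$p$ window with $\ubeta(p)=O(1)$.

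Second, your greedy construction does not deliver \ref{it:clusters-isolated}. The two-scale estimate holds for $\mu_\beta$-typical $\bsig$, but your centers are chosen to \emph{maximize} $\mu_\beta(\Ball_{\Delta_1}(\cdot))$ over uncovered points, which need not be typical; nothing prevents a greedy center from lacking the bottleneck. The paper instead defines the deterministic set $S_{\sreg}$ of points satisfying both the annulus estimate and the small-cap-mass estimate, shows $\mu_\beta(S_{\sreg})\ge 1-e^{-cN}$ via contiguity, and proves a deterministic overlap-gap lemma: any two points of $S_{\sreg}$ are at normalized distance either $\le 3r$ or $\ge R/3$ (otherwise the annulus of one contains the inner ball of the other, contradicting regularity). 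This forces $S_{\sreg}$ to split into well-separated equivalence classes, and any representative of a class serves as a cap center---so \ref{it:clusters-isolated} is inherited directly from membership in $S_{\sreg}$, and \ref{it:clusters-cover} is immediate since the caps cover $S_{\sreg}$. Your argument for \ref{it:clusters-cover} (``typical $\bsig$ lie within $\Delta_1$ of some chosen center by the very two-scale estimate'') is circular as written: the two-scale estimate around $\bsig$ says nothing about proximity to a previously chosen center.
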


\begin{remark}
\label{rem:Delta-shrinks-with-p}
    The distances in the above construction scale as $\Delta_1=\Theta\lt(1/\sqrt{\beta p}\rt)$ and $\Delta_2=\Theta\lt(1/\sqrt{p}\rt)$. The fact that $\Delta_1\ll \Delta_2$ for large $\beta$ strengthens the notion of shattering, as it means that different clusters are at a much larger distance apart than their diameters.
\end{remark}

\begin{remark}
    It is useful to compare our result to previously used notions of shattering.
    As previously mentioned, the influential work \cite{achlioptas2008algorithmic} proved a similar statement for constraint satisfaction problems also via contiguity with a planted model
    (see also \cite{biroli2000variational,mezard2002analytic,mezard2005clustering,
    achlioptas2006solution} for earlier results).
    As their work effectively takes place at zero temperature, instead of Assertion~\ref{it:clusters-isolated} they  require that all paths connecting distinct clusters must incur a energy cost linear in $N$  along the way. The definition of shattering we use here is adapted to the positive temperature setting and is similar to a definition given in~\cite{krzakala2007gibbs}
    (which in turns formalizes ideas from a long line of work, e.g.,
    \cite{franz1995recipes,monasson1995structural}). In the latter paper, the term \emph{clustering} is used to refer to the same phenomenon.
    From a technical perspective, \cite{achlioptas2008algorithmic} essentially proceeds via first and second moment computations, while our proof requires quantitative control of a certain Parisi measure in the planted model (see Proposition~\ref{prop:Franz-Parisi-deriv-positive}).

    The authors of \cite{arous2021shattering} consider the same model and show for $\beta$ sufficiently close to $\beta_c(p)$, the onset of a weaker version 
    of shattering where the decomposition carries a part of the Gibbs measure which is not exponentially small.
    They do this by computing the free energy of
    Thouless-Anderson-Palmer (TAP) states, and count them via the Kac-Rice formula.
    On the other hand, their work has the advantage of applying for all $p\geq 4$.

    Shortly after the initial posting of this paper, \cite{gamarnik2023shattering} independently showed the Ising pure $p$-spin model exhibits similar shattering behavior when $\beta\in (\sqrt{\log 2},\sqrt{2\log 2})$ and $p\geq P(\beta)$, by estimating certain first and second moments.
\end{remark}

As anticipated, our second theorem holds under certain assumptions on the behavior
of the so called Franz--Parisi potential.
With $\bsig\sim \mu_{\beta}$ a configuration distributed 
according to the Gibbs measure, the Franz--Parisi potential is defined by the following restricted free energy, for $q\in (-1,1)$:
\begin{equation}
\label{eq:FP-def}
    \cF_{\beta}(q) :=
    \lim_{\eps\downarrow 0}
    \plim_{N\to\infty} 
    \frac{1}{N}
    \log 
    \int e^{\beta H_N(\bsig')}
    1_{(\bsig,\bsig')\in \cS_N(q,\eps)}
    \de\mu_0(\bsig')\, ,
\end{equation}
where $\cS_N(q,\eps)$ is the set of pairs $(\bsig,\bsig')$ with overlap approximately $q$: $|\langle \bsig,\bsig'\rangle/N-q|\le \eps$.
For $\beta<\beta_c(p)$, existence of the limits and a Parisi-type variational 
formula for $\cF_{\beta}(q)$ are stated in Proposition \ref{prop:Franz-Parisi-formula} below.
\begin{theorem}
\label{thm:improved-clustering}
Assume $p\ge 3$ and $\beta<\beta_c(p)$ satisfy the non-monotonicity condition:
\begin{enumerate}[label={\sf FP},ref={\sf FP}]
  \item 
  \label{it:nonmonotone} 
  $\cF_{\beta}(q)$ is strictly increasing on some non-empty interval
   $(\lbq,\ubq)\subseteq [0,1]$, with $\lbq\geq 0.9999$.
\end{enumerate}

 Then for sufficiently small $\delta,c>0$, with probability $1-e^{-cN}$, there exist path-connected 
clusters $\cC_1,\dots,\cC_M\subseteq\cS_N$   that form a shattering decomposition 
of the probability  measure $\mu_{\beta}$, with parameters $(c,7\sqrt{1-\ubq},\delta,\delta)$.

   Moreover if $p$ is even then the clustering may be taken symmetric, so that the antipodal set $-\cC_i$ of any cluster is also a cluster.
\end{theorem}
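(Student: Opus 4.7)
The plan is to convert assumption \ref{it:nonmonotone} into a quantitative Gibbs-mass deficit in an annular shell around a $\mu_\beta$-typical configuration, and then carve out clusters greedily as spherical caps. First, using continuity and the variational representation of $\cF_\beta$ (Proposition~\ref{prop:Franz-Parisi-formula}) together with strict monotonicity on $(\lbq,\ubq)$, I would extract constants $\eta>0$ and $\delta_1>0$ (with $\delta_1$ a small multiple of $1-\ubq$) such that
$$\cF_\beta(\ubq-\delta_1/8)-\cF_\beta(q)\ \ge\ 2\eta,\quad\forall q\in[\lbq,\ubq-\delta_1].$$
Combining this with the definition \eqref{eq:FP-def} and a union bound over a fine $q$-net (using the $\plim$) should yield a set $G\subseteq\cS_N$ of \emph{regular} $\bsig$ with $\mu_\beta(G)\ge 1-e^{-\eta N/4}$ (with probability $1-e^{-cN}$ over $\bG$), such that for every $\bsig\in G$,
$$\mu_\beta\!\lt(\lt\{\bsig':\tfrac1N\la\bsig,\bsig'\ra\in[\lbq,\ubq-\delta_1]\rt\}\rt)\ \le\ e^{-\eta N}\,\mu_\beta\!\lt(\lt\{\bsig':\tfrac1N\la\bsig,\bsig'\ra\ge \ubq-\delta_1/8\rt\}\rt).$$

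With this deficit in hand, I would construct clusters by greedily selecting regular points $\bsig_1,\bsig_2,\dots$ in order of decreasing local cap mass, subject to the pairwise overlap constraint $\la\bsig_i,\bsig_j\ra/N\le \lbq$, and setting
$$\cC_i\ :=\ \lt\{\bsig'\in\cS_N:\ \tfrac1N\la\bsig_i,\bsig'\ra\ \ge\ \ubq-\delta_1/8\rt\}.$$
Each $\cC_i$ is then a path-connected spherical cap of normalized diameter $2\sqrt{1-(\ubq-\delta_1/8)^2}\le 7\sqrt{1-\ubq}$ (for $\delta_1$ small relative to $1-\ubq$), delivering \ref{it:small-diam}. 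Condition \ref{it:clusters-isolated} follows by choosing the buffer radius $\delta>0$ small enough that $\Ball_\delta(\cC_i)\setminus\cC_i$ lies inside the annular shell, whose mass is exponentially smaller than $\mu_\beta(\cC_i)$ by the deficit; a triangle-inequality computation on the sphere using the greedy overlap constraint yields \ref{it:clusters-separate} for $\delta$ small enough. Property \ref{it:small-prob} uses $\beta<\beta_c(p)$: under $\mu_\beta^{\otimes 2}$ the overlap concentrates near $0$, so any cap at overlap $\ge\ubq-\delta_1/8$ has $\mu_\beta$-mass at most $e^{-cN}$. Finally, \ref{it:clusters-cover} follows from greedy maximality, since any uncovered regular $\bsig$ would either violate maximality or lie inside the annular shell of some previously chosen $\bsig_i$, whose mass is exponentially small by construction.

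The main technical obstacle lies in Step~1, namely upgrading the pointwise-in-$q$ definition of $\cF_\beta$ (which involves averaging over $\bsig\sim\mu_\beta$, a measure that depends on $\bG$ in a complex way) into a uniform exponential Gibbs-mass estimate valid simultaneously for $\mu_\beta$-most $\bsig$ and uniformly over $q$ in the relevant window. I would handle this by a quantitative analysis of the planted Parisi formula for $\cF_\beta$ combined with a contiguity argument between $\mu_\beta$ and the planted distribution in which $\bsig$ is uniform on $\cS_N$ and $\bG$ has the corresponding conditional law, paralleling the route outlined in the introduction and used for Theorem~\ref{thm:main}. For even $p$, the identity $H_N(-\bsig)=H_N(\bsig)$ makes $G$ antipodally symmetric, so enforcing antipodal symmetry throughout the greedy selection (processing pairs $\{\bsig,-\bsig\}$ together) yields the required origin-symmetric clustering.
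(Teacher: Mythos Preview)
Your approach has a genuine gap in establishing \ref{it:clusters-separate} (and even disjointness of the caps). You select centers with pairwise overlap $\le \lbq$, i.e., at normalized distance $\ge R:=\sqrt{2(1-\lbq)}$, and take caps of normalized radius $\approx r:=\sqrt{2(1-\ubq)}$. For two such caps to be even disjoint (let alone $\delta$-separated), the triangle inequality requires $R>2r$, equivalently $1-\lbq>4(1-\ubq)$. But hypothesis \ref{it:nonmonotone} makes no such assumption: the interval $(\lbq,\ubq)$ may be arbitrarily short, so $(1-\lbq)/(1-\ubq)$ may be as close to $1$ as one likes. Your ``triangle-inequality computation'' therefore cannot yield separation. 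This is exactly the regime that distinguishes Theorem~\ref{thm:improved-clustering} from Theorem~\ref{thm:main}, and why the paper explicitly abandons spherical caps for this result.

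The paper's actual proof circumvents this by defining the regular set $S$ similarly to yours, but then working in the $\delta$-adjacency graph $\cG_{S,\delta}$ on $S$. The key structural fact (Proposition~\ref{prop:S-long-paths}) is that any $(S,\delta)$-path traveling normalized distance $\ge 3R$ must have exponentially many steps: one shows via a counting argument that along such a path, the annular shells $\Ball_R(\hbs_i)\setminus\Ball_r(\hbs_i)$ collectively cover the inner balls $\Ball_r(\hbs_i)$ many times, contradicting the mass deficit. A pigeonhole then locates a graph-radius $i\le e^{\alpha N/2}$ at which $\mu_\beta(\Ball_{i}(\bs;\cG_{S,\delta}))$ nearly equals $\mu_\beta(\Ball_{i+1}(\bs;\cG_{S,\delta}))$, producing the bottleneck. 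Clusters are these graph-balls (thickened to path-connected sets via short geodesics), extracted greedily; separation comes for free from the graph structure, not from comparing $r$ and $R$. A final deletion step removes the few clusters violating \ref{it:clusters-isolated}. Your ingredients for \ref{it:small-prob} and the contiguity/planted-model upgrade of the Franz--Parisi estimate are correct and are used in the paper as well.
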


\revv{
\begin{remark}
Note that the radius parameter $r$ in the shattering decomposition 
established on the last theorem is $r=7\sqrt{1-\ubq}$.
We require the lower bound condition $\lbq\ge 0.9999$ 
to be very close to $1$,  so that  $\sqrt{1-\lbq}$ is also sufficiently small.
This bound will also be used in proving Theorem~\ref{thm:disorder-chaos}.
We note that some lower bound on $\lbq$ is needed to ensure disjointness of clusters (e.g., clusters with orthogonal centers).
\end{remark}
}

\revv{
\begin{remark}
While our results concern the pure $p$-spin spherical model of 
Eq.~\eqref{eq:def-hamiltonian}, we believe our techniques extend
to a suitable subset of mixed models as well.
In these models, the Hamiltonian has covariance
$\E[H_N(\bsig_1)H_N(\bsig_2)] = N\xi(\<\bsig_1,\bsig_2\>/N)$ for a 
polynomial $\xi$ (pure $p$-spin models correspond to 
$\xi(q) = q^p$). The main property of $\xi(q)$ that we use in our proofs 
is the fact that it has a large derivative in a neighborhood of $q=1$.
\end{remark}
}

\subsection{Consequences of shattering: I. Langevin dynamics}

As already discussed in \cite{ben2018spectral}, shattering has immediate
consequences for the Langevin dynamics for $\mu_{\beta}$. Langevin dynamics
is defined by the following diffusion on $\cS_N$:
\begin{align}
&\bsig_0\sim \mu_{\beta}(\,\cdot\,)\, ,\\
&\de\bsig_t = \left(\beta \nabla_{\sph} H_N(\bsig_t)-\frac{N-1}{N}\bsigma_t\right)\de t
+\Proj^{\perp}_{\bsig_t}\sqrt{2}\de\bB_t\, ,
\end{align}
where 
\[
\nabla_{\sph} \phi(\bsig) := \Proj^{\perp}_{\bsig} \nabla \phi(\bsig)  \, 
\]
denotes the spherical gradient of the function $\phi$, i.e. the usual gradient projected onto the space orthogonal to $\bsig$: $\Proj^{\perp}_{\bsig}=\id-\bsig\bsig^{\sT}/N$. It is well known, and easy to verify, that this
 is a Markov process, ergodic and reversible with respect to $\mu_{\beta}$.
 \revv{
 Its infinitesimal generator $\cL_{\beta}$ is given by 
 \begin{equation}
 \label{eq:L-beta}
\cL_{\beta}
=
\Delta_{\sph} +\beta\la \nabla_{\sph}\,\cdot\,,\nabla_{\sph}H_N\ra \, ,
 \end{equation}
 where $\Delta_{\sph}$ is the spherical Laplacian.
 }

The first consequence we state was already established in  \cite{ben2018spectral}: Langevin
dynamics fails to be rapidly mixing, and has exponentially many eigenvalues which are exponentially small.
Here we extend this corollary to the domain $(\beta,p$) covered by 
Theorems~\ref{thm:main} and \ref{thm:improved-clustering}.
The following standard estimate will be useful; we note that the scaling in $p$ appears in Proposition~\ref{prop:Franz-Parisi-deriv-positive}.

\revv{
\begin{proposition}
\label{prop:norm-bound}
Let $H_N$ be the spherical $p$-spin Hamiltonian.
Then there exists a universal constant $C$ independent of $p$
such that with probability $1-e^{-cN}$, 
    \begin{equation}
    \label{eq:C-bounded}
    \sup_{\bsig\in\cS_N}\|\nabla^j H_N(\bsig)\|\leq \frac{C N^{1-\frac{j}{2}}}{10} p^j\sqrt{\log p}\, .
    \end{equation}
Here we use the injective tensor norm given by (c.f. \cite{banach1938homogene})
\[
    \|\nabla^j H_N(\bsig)\|
    :=
    \sup_{\|\bx^1\|=\dots=\|\bx^j\|=1}
    \la \nabla^j H_N(\bsig),\bx^1\otimes\dots\otimes \bx^j\ra
    =
    \sup_{\|\bx\|=1}
    \la \nabla^j H_N(\bsig),\bx^{\otimes j}\ra \, .
\]
\end{proposition}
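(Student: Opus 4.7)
The plan is to reduce the sup-norm estimate to a bound on the injective norm of the symmetrized Gaussian tensor $\Sym(\bG)$, then control the latter by a standard $\varepsilon$-net plus Gaussian concentration argument.

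First, since $\bsig^{\otimes p}$ is symmetric,
$H_N(\bsig) = N^{-(p-1)/2}\la\Sym(\bG),\bsig^{\otimes p}\ra$,
and differentiating $j$ times at $\bsig$ in the direction $\bx$ gives by the chain rule
\[
\la\nabla^j H_N(\bsig),\bx^{\otimes j}\ra
=
\frac{p(p-1)\cdots(p-j+1)}{N^{(p-1)/2}}
\,\la\Sym(\bG),\,\bsig^{\otimes(p-j)}\otimes\bx^{\otimes j}\ra.
\]
Rescaling $\bsig=\sqrt{N}\,\bu$ with $\|\bu\|=1$, taking the supremum over $\|\bx\|=1$ and $\|\bu\|=1$, and invoking Banach's theorem (which identifies the injective norm of a symmetric tensor with its norm against a single symmetric power $\bw^{\otimes p}$), I obtain
\[
\sup_{\bsig\in\cS_N}\|\nabla^j H_N(\bsig)\|
\leq
p^j\,N^{(1-j)/2}\,\|\Sym(\bG)\|,
\qquad
\|\Sym(\bG)\|:=\sup_{\|\bw\|=1}|\la\bG,\bw^{\otimes p}\ra|.
\]

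Next I bound $\|\Sym(\bG)\|$ by a discretization of the sphere. For each fixed unit $\bw$, $\la\bG,\bw^{\otimes p}\ra$ is a centered Gaussian of variance $1$. Taking an $\varepsilon$-net of $\bbS^{N-1}$ of cardinality $\leq(3/\varepsilon)^N$ and applying Borell--TIS with a union bound yields, with probability $1-e^{-cN}$,
\[
\max_{\text{net}}|\la\bG,\bw^{\otimes p}\ra|\leq C\sqrt{N\log(1/\varepsilon)+cN}.
\]
To pass from the net to the full sphere, I use the telescoping identity $\bw^{\otimes p}-\bw'^{\otimes p}=\sum_{k=0}^{p-1}\bw^{\otimes k}\otimes(\bw-\bw')\otimes\bw'^{\otimes(p-1-k)}$, which shows that $\bw\mapsto\la\bG,\bw^{\otimes p}\ra$ is Lipschitz on $\bbS^{N-1}$ with constant at most $p\,\|\Sym(\bG)\|$. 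Choosing $\varepsilon=1/(4p)$ bounds the Lipschitz error by $\tfrac12\|\Sym(\bG)\|$, which I absorb into the left-hand side to conclude $\|\Sym(\bG)\|\leq C'\sqrt{N\log p}$ with probability $1-e^{-cN}$. Combining with the first display delivers the claim with a universal constant.

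The main point to check carefully is the chain-rule identity in the algebraic reduction, where one must track the falling factorial $p(p-1)\cdots(p-j+1)$ and use symmetry of $\Sym(\bG)$; the net-and-Lipschitz step is otherwise entirely standard. The extra $\sqrt{\log p}$ factor in the final bound is produced precisely by the choice $\varepsilon\sim 1/p$, which is needed to absorb the $p$-Lipschitz contribution of $\bw\mapsto\bw^{\otimes p}$. I expect the whole proof to fit into a short appendix.
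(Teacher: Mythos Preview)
Your proposal is correct and is essentially the paper's approach made explicit. The paper's own proof is two sentences: it cites \cite[Lemma 2.1]{richard2014statistical} together with Borell--TIS for the case $j=0$, and then says the extension to $j\geq 1$ is ``immediate by homogeneity.'' Your chain-rule reduction and invocation of Banach's theorem are exactly what ``homogeneity'' means here, and your $\varepsilon$-net argument with $\varepsilon\sim 1/p$ is precisely what the cited lemma does. The only cosmetic difference is that the paper controls the supremum via Borell--TIS concentration around the mean (with the mean bounded by the cited lemma), whereas you use a direct union bound over the net; both routes are standard and yield the same conclusion. Your write-up is more self-contained, at the cost of being longer than the paper's two-line citation.
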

}

\begin{proof}
This follows by \cite[Lemma 2.1]{richard2014statistical} and Borell-TIS when $j=0$.
The extension to larger $j$ is immediate by homogeneity.
\end{proof}

\begin{corollary}
\label{cor:spectral-gap}
    For $(\beta,p)$ either as in Theorem~\ref{thm:main} 
    or in Theorem \ref{thm:improved-clustering}, with probability $1-e^{-cN}$ 
    the first $e^{cN}$ eigenvalues of $\cL_{\beta}$ are at most $e^{-cN}$.
\end{corollary}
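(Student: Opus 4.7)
The plan is to use the variational (Courant--Fischer) characterization of eigenvalues of the reversible Markov generator $\cL_\beta$. Since $-\cL_\beta$ is a non-negative self-adjoint operator on $L^2(\mu_\beta)$ with Dirichlet form $\cE_\beta(f) = \langle f, -\cL_\beta f\rangle_{L^2(\mu_\beta)} = \int \|\nabla_{\sph} f\|^2\,\de\mu_\beta$, it suffices to exhibit, on the high-probability event of Theorem~\ref{thm:main} or Theorem~\ref{thm:improved-clustering}, an $L^2(\mu_\beta)$-orthogonal family of at least $e^{cN}/2$ functions whose Rayleigh quotients are bounded by $e^{-c'N}$ for some $c'>0$. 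The min-max principle will then yield the desired bound on the low-lying spectrum of $-\cL_\beta$.

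To construct such a family, I would attach to each cluster $\cC_m$ of the shattering decomposition a Lipschitz bump
\[
\phi_m(\bsig) \;=\; \max\!\Big(0,\; 1 - \tfrac{2\,d(\bsig,\cC_m)}{r_b\sqrt{N}}\Big),
\]
which equals $1$ on $\cC_m$, vanishes outside $\Ball_{r_b/2}(\cC_m)$, and has spherical gradient of norm at most $2/(r_b\sqrt{N})$. The bottleneck assertion~\ref{it:clusters-isolated} (which controls $\Ball_{r_b}(\cC_m)$ and hence, a fortiori, the smaller annulus $\Ball_{r_b/2}(\cC_m)\setminus \cC_m$) then gives
\[
\cE_\beta(\phi_m) \;\leq\; \tfrac{4}{r_b^2 N}\,\mu_\beta\!\big(\Ball_{r_b/2}(\cC_m)\setminus \cC_m\big)
\;\leq\; \tfrac{4}{r_b^2 N}\cdot\tfrac{e^{-cN}}{1-e^{-cN}}\,\mu_\beta(\cC_m),
\]
while $\|\phi_m\|_{L^2(\mu_\beta)}^2 \geq \mu_\beta(\cC_m)$. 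In both theorems $r_b$ is a fixed positive constant (independent of $N$, though possibly depending on $p,\beta,\delta$), so each single-cluster Rayleigh quotient is bounded by $e^{-c'N}$ for some $c'>0$.

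Since the clusters satisfy $s\geq r_b$ in both shattering constructions (assertion~\ref{it:clusters-separate}), the open supports $\Ball_{r_b/2}(\cC_m)$ can intersect only in the codimension-one set of points equidistant from two distinct clusters, which has zero $\mu_\beta$-measure (as $\mu_\beta$ has a bounded density with respect to the surface measure on $\cS_N$). Hence $\{\phi_m\}$ is orthogonal in $L^2(\mu_\beta)$, and both $\|\cdot\|_{L^2(\mu_\beta)}^2$ and $\cE_\beta(\cdot)$ decouple as diagonal sums on its span, so the Rayleigh bound $\leq e^{-c'N}$ is inherited by every element of the span. Combining~\ref{it:small-prob} and~\ref{it:clusters-cover} furnishes the count $M \geq (1-e^{-cN})e^{cN}\geq e^{cN}/2$, and Courant--Fischer concludes after a harmless relabeling of the constant. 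The only mildly delicate point is the borderline case $s = r_b$ (rather than $s > r_b$) of the separation parameter: halving the cutoff radius to $r_b/2$ is what makes the supports essentially disjoint without degrading the exponential bottleneck estimate. Otherwise the argument is a routine min-max calculation, closely paralleling the spectral-gap consequence noted in~\cite{ben2018spectral}.
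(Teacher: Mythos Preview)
Your approach is essentially the same as the paper's: build a bump function on each cluster, use the bottleneck property~\ref{it:clusters-isolated} to bound each Rayleigh quotient, use the separation~\ref{it:clusters-separate} to make the family orthogonal, and apply min--max. Two small remarks. First, your direct use of the Dirichlet form $\cE_\beta(f)=\int\|\nabla_{\sph}f\|^2\,\de\mu_\beta$ is a mild simplification over the paper, which instead bounds $\langle\phi_m,(-\cL_\beta)\phi_m\rangle_{\mu_\beta}$ via the explicit expression~\eqref{eq:L-beta} and hence needs Proposition~\ref{prop:norm-bound} to control $\nabla_{\sph}H_N$; your route avoids that input. Second, your treatment of the borderline case $s=r_b$ is slightly shaky: the set where two closed balls $\Ball_{r_b/2}(\cC_{m_1})$ and $\Ball_{r_b/2}(\cC_{m_2})$ meet need not be codimension one for general $\cC_m$, so the measure-zero claim is not justified as written (though you correctly observe the $\phi_m$ themselves vanish there). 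The paper sidesteps this by cutting off at radius $r_b/3$ rather than $r_b/2$, giving strictly disjoint supports; making the same trivial change in your argument removes the issue without affecting the exponential bound.
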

\begin{proof}
Let $\Delta=\Delta_2$ under the conditions of 
Theorem \ref{thm:main} or $\Delta=\delta$ under
Theorem \ref{thm:improved-clustering}.
Let $\gamma:\bbR\to [0,1]$ be an increasing smooth function with $\gamma(0)=0$ and $\gamma(1)=1$.
For each cluster $\cC_{m}$ in the decomposition of these theorems, define the test function
\begin{align}
\phi_{m}(\bsig) :=\begin{cases}
1 & \mbox{ if $\bsig\in\cC_m$,}\\
\gamma\left(1-\frac{3d(\bsig,\cC_m)}{\Delta\sqrt{N}}\right)& \mbox{ otherwise.}
\end{cases}
\end{align}
Now using Theorem \ref{thm:main} \revv{or \ref{thm:improved-clustering}, and Proposition~\ref{prop:norm-bound},} it is easy to see that for each $m$, and some constant $c>0$,
\begin{align}
\frac{\<\phi_m,(-\cL_{\beta})\phi_m\>_{\mu_{\beta}}}
{\|\phi_m\|_{\mu_\beta}^2} \le e^{-cN}\, ,\label{eq:Dir}
\end{align}
where the norm and scalar product are in $L^2(\cS_N,\mu_{\beta})$.
\revv{
Indeed we have $\|\phi_m\|_{\mu_\beta}^2\geq \Omega(\mu_{\beta}(\cC_m))$ for the denominator. 
Meanwhile using \eqref{eq:L-beta} and Proposition~\ref{prop:norm-bound} shows that with probability $1-e^{-cN}$, the numerator is 
\[
\<\phi_m,(-\cL_{\beta})\phi_m\>_{\mu_{\beta}}
\leq 
O\big(\mu_{\beta}(\Ball_{\Delta/3}(\cC_m))\big).
\]
Namely the integral defining the numerator is nonzero only on $\mu_{\beta}(\Ball_{\Delta/3}(\cC_m))$, and is uniformly bounded on this set under the event of Proposition~\ref{prop:norm-bound}.
Assertion~\ref{it:clusters-isolated} of Definition~\ref{def:Shattering} thus implies \eqref{eq:Dir} as claimed.
}

Finally $\phi_m,\phi_{m'}$ have disjoint support for $m\neq m'$, so
Eq.~\eqref{eq:Dir} holds for all $f\in {\rm span}(\phi_m: 1\le m\le e^{cN})$. The claim follows by the variational characterization of 
eigenvalues.
\end{proof}

In spin glass physics, the two-time correlation function is more often considered a signature of slow dynamics (for the simple reason that eigenvalues
of the generator cannot be observed in a physical system). The next corollary shows that,
in the shattered phase, this correlation indeed exhibits an exponentially 
long plateau. We note that, for proving this fact, it is crucial
that we establish that the shattering decomposition covers all but a vanishing fraction of the Gibbs measure, i.e. Assertion \ref{it:clusters-cover} of Definition \ref{def:Shattering}. The
shattering result of \cite{arous2021shattering} is not sufficient for this purpose.
(On the other hand, a similar result was proven in \cite{montanari2006rigorous} for
a spin glass model defined on a sparse hypergraph.)
\begin{corollary}
\label{cor:CorrelationFunction}
Let $(\bsigma_t)_{t\ge 0}$ be a stationary trajectory 
for the Langevin dynamics.
Let $\Delta = \Delta_1+\Delta_2$ for $(\beta,p)$  under the assumptions of  Theorem~\ref{thm:main},
or $\Delta = 7\sqrt{1-\ubq}+\delta$ under the assumptions of Theorem \ref{thm:improved-clustering}.
Then there exists $c>0$ such that
%
\begin{align}
  \prob\left(
    \inf_{0\leq t\leq e^{cN}} 
    \frac{1}{N}\<\bsigma_0,\bsigma_t\>\ge 1-\Delta^2\right)
    \ge 1-e^{-cN}\, .
\end{align}
In particular, letting $C_N(t):=\E\<\bsigma_0,\bsigma_t\>/N$
denote the correlation function, for all $N\ge N_0(\beta,p)$
we have $C_N(t)\ge 1-2\Delta^2$
for all $t\leq e^{cN}$.
\end{corollary}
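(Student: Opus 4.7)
The plan is to deduce the corollary from the shattering theorems by a trapping argument: if $\bsigma_0$ lies in a cluster $\cC_{m_0}$ and $\bsigma_t$ never leaves the $r_b$-neighborhood of that cluster, then the correlation stays close to $1$ by the triangle inequality. Concretely, by stationarity $\bsigma_0\sim\mu_\beta$, and Assertion~\ref{it:clusters-cover} of Definition~\ref{def:Shattering} gives $\bsigma_0\in\cC_{m_0}$ for a (random) cluster index $m_0$ with probability at least $1-e^{-cN}$. Writing $A_{m_0}:=\Ball_{r_b}(\cC_{m_0})$, with $r_b=\Delta_2$ (respectively $r_b=\delta$) under the hypotheses of Theorem~\ref{thm:main} (respectively Theorem~\ref{thm:improved-clustering}), I reduce the corollary to the \emph{trapping claim}
\[
\prob\big[\,\bsigma_t\in A_{m_0}\ \forall\,t\in[0,e^{cN}]\ \big|\ \bsigma_0\in\cC_{m_0}\,\big]\geq 1-e^{-cN}.
\]
Indeed, on this event, Assertion~\ref{it:small-diam} and the triangle inequality yield $\|\bsigma_0-\bsigma_t\|_2\leq (r+r_b)\sqrt{N}=\Delta\sqrt{N}$, so $\tfrac{1}{N}\langle\bsigma_0,\bsigma_t\rangle=1-\tfrac{\|\bsigma_0-\bsigma_t\|_2^2}{2N}\geq 1-\Delta^2/2\geq 1-\Delta^2$. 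The expectation-level statement $C_N(t)\geq 1-2\Delta^2$ then follows from $|\langle\bsigma_0,\bsigma_t\rangle/N|\leq 1$ on the complementary event of probability $\leq e^{-cN}$.

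To prove the trapping claim, I plan to reuse the smooth test functions $\phi_m:\cS_N\to[0,1]$ from the proof of Corollary~\ref{cor:spectral-gap}, which equal $1$ on $\cC_m$, vanish outside $A_m$, and satisfy the exponentially small Dirichlet form estimate
\[
\cE(\phi_m,\phi_m):=\int_{\cS_N}|\nabla_{\sph}\phi_m|^2\,\de\mu_\beta\leq e^{-cN}\|\phi_m\|_{\mu_\beta}^2,
\]
driven by Assertion~\ref{it:clusters-isolated}. By reversibility of $\cL_\beta$ with respect to $\mu_\beta$, for each fixed $t\geq 0$ the spectral identity
\[
\bbE\!\big[(\phi_{m_0}(\bsigma_t)-\phi_{m_0}(\bsigma_0))^2\big]=2\big\langle\phi_{m_0},(I-e^{t\cL_\beta})\phi_{m_0}\big\rangle_{\mu_\beta}\leq 2t\,\cE(\phi_{m_0},\phi_{m_0}),
\]
together with Assertion~\ref{it:clusters-isolated} (which gives $\|\phi_m\|_{\mu_\beta}^2\leq 2\mu_\beta(\cC_m)$) and Markov's inequality conditional on $\bsigma_0\in\cC_{m_0}$ (on which $\phi_{m_0}=1$), yields a \emph{fixed-$t$} trapping probability $\geq 1-e^{-c'N}$ valid for all $t\leq e^{c''N}$.

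In my view, the main obstacle is lifting this fixed-$t$ bound to a supremum over the whole interval $[0,e^{cN}]$ without losing the exponent to a naive union bound. I expect to handle this by applying It\^o's formula to $\phi_{m_0}(\bsigma_t)$, whose martingale part has quadratic variation $2\int_0^t|\nabla_{\sph}\phi_{m_0}(\bsigma_s)|^2\,\de s$ and whose drift $\cL_\beta\phi_{m_0}$ is pointwise $O(1)$ on the high-probability event of Proposition~\ref{prop:norm-bound} (using the controlled $C^2$-norms of $\phi_m$ together with $\|\nabla H_N\|\leq O(\sqrt{N})$). Doob's maximal inequality applied to this semimartingale on a sufficiently fine time grid, combined with the short-time displacement estimate $\|\bsigma_{t+\tau}-\bsigma_t\|_N=O(\sqrt{\tau})$ valid for $\tau\ll r_b^2$, should allow interpolation between grid times and absorb the union-bound factor into a strictly smaller effective constant $c$. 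In the symmetric even-$p$ case, the separation $s=\Delta_2$ from Assertion~\ref{it:clusters-separate} makes it routine to check that the trapping argument does not spuriously connect antipodal clusters.
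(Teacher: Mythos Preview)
Your approach differs from the paper's. The paper does not use the bottleneck assertion \ref{it:clusters-isolated} or the test functions $\phi_m$ at all for this corollary. Instead it argues directly via stationarity and the short-time continuity lemma you allude to: since $\bsigma_t\sim\mu_\beta$ for each fixed $t$, assertion \ref{it:clusters-cover} gives that the expected Lebesgue measure of $\{t\in[0,2T]:\bsigma_t\notin\overline\cC\}$ is at most $2Te^{-cN}$, where $\overline\cC=\bigcup_m\cC_m$. On the other hand, if the first exit time $\tau$ from the $(s/3)$-neighborhood of $\overline\cC$ satisfies $\tau\leq T$, the short-time continuity estimate forces $\bsigma_t\notin\overline\cC$ for a time interval of length at least some constant $\rho_0>0$. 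Comparing yields $\prob(\tau\leq T)\leq O(Te^{-cN}/\rho_0)$. On $\{\tau>T\}$, path-continuity and \ref{it:clusters-separate} confine $(\bsigma_t)_{t\le T}$ to the neighborhood of a \emph{single} cluster, and \ref{it:small-diam} finishes.

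Your route via the Dirichlet form and It\^o's formula can be made to work, with two caveats. First, the claim that $\cL_\beta\phi_{m_0}$ is pointwise $O(1)$ requires $\phi_{m_0}\in C^2$; this is fine when $\cC_{m_0}$ is a spherical cap (Theorem~\ref{thm:main}) but is not obvious for the more intricate clusters of Theorem~\ref{thm:improved-clustering}, where $d(\cdot,\cC_m)$ need not even be $C^1$ --- you would need a mollification step. Second, once you invoke the short-time displacement estimate on a grid you are using exactly the tool that drives the paper's argument, and at that point the Dirichlet-form layer is redundant: stationarity plus \ref{it:clusters-cover} alone already bound the exit time from $\overline\cC$, and \ref{it:clusters-separate} then localizes to one cluster. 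The upside of your approach is that it traps the dynamics near a \emph{specific} cluster from the outset (via \ref{it:clusters-isolated}), whereas the paper first traps in the union and only then localizes; the paper's route is shorter and sidesteps the regularity issue.
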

\begin{proof}
Under the conditions of Theorem~\ref{thm:main}, we let $r=\Delta_1$, $s=\Delta_2$,
and under the conditions of Theorem \ref{thm:improved-clustering}, let 
$r=7\sqrt{1-\ubq}$, $s=\delta$,
    
    Let $\cG_t:= \sigma(\bB_s:s\le t)$ be the filtration of Brownian
    motion.
    By e.g., \cite[Lemma 2.1]{sellke2023threshold}, for any $\eps>0$ there exists 
    $\rho=\rho(\eps,p,C)$ (playing the role of $s$ therein) such that the following 
    continuity property holds whenever \eqref{eq:C-bounded} holds.
    If $\tau$ is any stopping time for the Langevin dynamics then
    \begin{equation}
    \label{eq:langevin-continuous}
    \bbP\lt(
    \sup_{t\in [\tau,\tau+\rho]}
    \|\bsig_t-\bsig_{\tau}\|\leq \eps\sqrt{N}
    ~\Big|~\big(\bG,\cG_{\tau}\big)
    \rt)
    \geq 1-e^{-cN}.
    \end{equation}

    We may restrict our attention to the exponentially likely $\bG$-measurable event $\cE$
    that the shattering decomposition of Definition \ref{def:Shattering} exists and \eqref{eq:C-bounded} holds.
    Let $\overline{\cC}=\cup_{i=1}^M \cC_i$, $T=e^{cN/3}$ and denote by $\mu_{\Leb}$
     standard Lebesgue measure on $\bbR$.
    Since $\bsig_t\sim \mu_{\beta}$ for each $t\in [0,2T]$ by stationarity, we obtain,
    on $\cE$,
    \begin{equation}
    \label{eq:few-exceptional-times}
        \bbE \lt[\mu_{\Leb}\lt(
        \lt\{
        t\in [0,2T]~:~\bsig_t\notin \overline{\cC}
        \rt\}
        \rt)\big|\bG
        \rt]
        \leq 
        2Te^{-cN}\leq e^{-cN/2}\, .
    \end{equation}
        
    Let $\tau\geq 0$ be the first time at which $d(\bsig_{\tau},\overline{\cC})\geq s\sqrt{N}/3$. 
    Using \eqref{eq:langevin-continuous}, on $\cE$ we have, for a positive constant $\rho_0$,
    \begin{equation}
    \label{eq:many-exceptional-times}
        \bbE \lt[\mu_{\Leb}\lt(
        \lt\{
        t\in [0,2T]~:~\bsig_t\notin \overline{\cC}
        \rt\}
        \rt)
        ~\big|~
        \big(\bG,\cG_{\tau}\big)
        \rt]
        \geq 
       \rho_0\bfone_{\tau\leq T} \, .
    \end{equation}
    Taking expectations and combining with \eqref{eq:few-exceptional-times}, it follows that,
    for a constant $c'>0$,
    \begin{equation}
    \label{eq:tau-lage}
    \bbP(\tau\leq T)\leq e^{-2c'N}/\rho_0\leq e^{-c'N}.
    \end{equation}
    
    Finally on the event $\tau>T$, continuity of $\bsig_t$ implies that $(\bsig_t)_{t\in [0,T]}$ 
    stays within a single connected component of the $ s\sqrt{N}/3$-neighborhood of
     $\overline{\cC}$.
     Applying Theorem \ref{thm:main} or Theorem \ref{thm:improved-clustering}
    Part~\ref{it:clusters-separate} of Definition \ref{def:Shattering} 
    then implies 
    that 
    $\bsig_t$ stays inside the $s\sqrt{N}/3$-neighborhood of a single cluster $\cC_i$. 
    This neighborhood has diameter at most $(r+s)\sqrt{N}$ by Part~\ref{it:small-diam}.
    Therefore if $\tau>T$ and the decomposition Definition \ref{def:Shattering}  both hold, then
    \[
   \frac{1}{N} \la \bsig_0,\bsig_t\ra
    =
    1-\frac{\|\bsig_0-\bsig_t\|^2}{2N}
    \geq 
    1-(r+s)^2
    ,\quad\forall~t\in [0,T]\, .
    \]
    This completes the proof (modulo adjusting the value of $c$).
\end{proof}

\subsection{Consequences of shattering: II. Disorder chaos}

We also prove that a transport notion of disorder chaos follows from either Theorem~\ref{thm:main} or Theorem~\ref{thm:improved-clustering}.
Define the normalized Wasserstein distance, given for probability measures $\mu,\nu$ on $\cS_N$ by
\[
    W_{2,N}(\mu,\nu)^2 =  \inf_{\pi \in \Pi(\mu,\nu)} \frac{1}{N} 
    \E_{\pi} \Big[\big\|\bX - \bY\big\|_2^2\Big],
\]
where the infimum is over all couplings $(\bX,\bY) \sim \pi$ with marginals $\bX \sim \mu$ and 
$\bY \sim \nu$.  
Moreover, define the $(1-\eps_N)$-correlated disorder
\begin{equation}
\label{eq:correlated-disorder}
\bG^{\eps_N}
=
(1-\eps_N)\bG+\sqrt{2\eps_N-\eps_N^2}\bW
\end{equation}
where $\bW$ is a i.i.d. copy of $\bG$, and let $\mu_{\beta}^{\eps_N}$ be the corresponding Gibbs measure.  
We say that \emph{transport disorder chaos} holds for the random pair $(\mu_{\beta},\mu_{\beta}^{\eps_N})$ of Gibbs measures if
\begin{equation}
\label{eq:def-disorder-chaos}
\liminf_{N\to\infty}\,\,
    \bbE
    \lt[
    W_{2,N}\lt(\mu_{\beta},\mu_{\beta}^{\eps_N}\rt)
    \rt]
    >0\, .
\end{equation}
As discussed below, this is a somewhat stronger notion of chaos than the overlap-based one used in  \cite{chatterjee2009disorder}.
We say that \emph{transport disorder stability} holds if 
$\lim_{N\to\infty}
    \bbE
    \lt[
    W_{2,N}\lt(\mu_{\beta},\mu_{\beta}^{\eps_N}\rt)
    \rt]
    =0$.

\begin{theorem}
\label{thm:disorder-chaos}
    Let $(\beta,p)$ be as in Theorem~\ref{thm:main} or Theorem~\ref{thm:improved-clustering}. Let $\mu_{\beta}$ and $\mu_{\beta}^{\eps_N}$ be two Gibbs measures for correlated Gaussian disorders $\bG$, $\bG^{\eps_N}$
    as in Eq.~\eqref{eq:correlated-disorder}. 
    If $\eps_N = \Omega(1/N)$, then transport disorder chaos holds for $(\mu_{\beta},\mu_{\beta}^{\eps_N})$.
\end{theorem}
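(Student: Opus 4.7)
The overall plan is to use the shattered structure to show that any transport plan between $\mu_\beta$ and $\mu_\beta^{\eps_N}$ must move a positive fraction of mass across inter-cluster distances of order $\sqrt{N}$. The driving mechanism is that a Gaussian perturbation with $\eps_N=\Omega(1/N)$ induces log-weight shifts of order $\Theta(1)$ across the exponentially many clusters, producing a macroscopic reshuffling of Gibbs mass that no short-range coupling can repair.

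First I would invoke Theorem~\ref{thm:main} (or Theorem~\ref{thm:improved-clustering}) on $\mu_\beta$ to obtain a shattering decomposition $\{\cC_m\}_{m=1}^M$ with inter-cluster separation at least $s\sqrt{N}$. Define $p_m := \mu_\beta(\cC_m)$ and $\tilde q_m := \mu_\beta^{\eps_N}\!\big(\Ball_{s/3}(\cC_m)\big)$; since the $s/3$-neighborhoods of distinct $\cC_m$ are disjoint, any coupling $\pi$ of $(\mu_\beta,\mu_\beta^{\eps_N})$ satisfies
\[
    \E_\pi\!\bigl[\|\bX-\bY\|^2/N\bigr] \;\geq\; \frac{s^2}{9}\sum_{m=1}^M (p_m - \tilde q_m)_+,
\]
because mass originating in $\cC_m$ but landing outside $\Ball_{s/3}(\cC_m)$ must travel at least $s\sqrt{N}/3$. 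Thus the theorem reduces to proving $\bbE\!\left[\sum_m (p_m - \tilde q_m)_+\right] \geq c' > 0$.

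For this TV-type lower bound, write $R_m := \log(\tilde q_m / p_m)$. Using smoothness of $H_N$ (Proposition~\ref{prop:norm-bound}) together with the small cluster diameter (Remark~\ref{rem:Delta-shrinks-with-p}), the perturbation field $D_N(\bsig) := H_N^{\bG^{\eps_N}}(\bsig)-H_N^{\bG}(\bsig)$ is approximately constant on each $\cC_m$ up to $o(1)$, and the mass that $\mu_\beta^{\eps_N}$ places in $\Ball_{s/3}(\cC_m)\setminus\cC_m$ is controlled by the bottleneck axiom~\ref{it:clusters-isolated} applied to $\mu_\beta^{\eps_N}$. Hence $R_m \approx \beta D_N(\bsig_m^\star) - \log(Z_\beta^{\eps_N}/Z_\beta)$ for a representative $\bsig_m^\star\in \cC_m$. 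Since $\bG^{\eps_N}-\bG$ has i.i.d.\ Gaussian entries of variance $\Theta(\eps_N)=\Theta(1/N)$, the field $D_N$ is centered Gaussian with covariance $\Theta(1)\cdot(\langle\bsig,\bsig'\rangle/N)^p$. Each $R_m$ therefore has variance $\Omega(\beta^2)$, and separation~\ref{it:clusters-separate} gives $|\langle\bsig_m^\star,\bsig_{m'}^\star\rangle|/N\leq 1-s^2/2$, so the covariances $\Cov(R_m,R_{m'})$ decay like $(1-s^2/2)^p$, making the $\{R_m\}$ essentially uncorrelated for $p$ large.

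The principal obstacle is to convert this pointwise variability of $\{R_m\}$ into an $\Omega(1)$ reshuffling of cluster masses. Variance alone does not suffice unless $\{p_m\}$ is spread out, but Assertion~\ref{it:small-prob} yields $\max_m p_m\leq e^{-cN}$, so there are at least $e^{cN}$ effective atoms. Given this spread and the near-independence of $\{R_m\}$, a chi-square computation comparing the reweighted distribution $p_m e^{R_m}/\sum_k p_k e^{R_k}$ against $p_m$ yields an $\Omega(1)$ total variation distance with positive disorder-probability; passing to the one-sided quantity $\sum_m (p_m-\tilde q_m)_+$ then gives the required bound. The most delicate technical points are (a) justifying the approximation $R_m\approx \beta D_N(\bsig_m^\star)+\text{const}$ uniformly over $m\in[M]$, which requires carefully truncating the integrals defining $p_m$ and $\tilde q_m$ to the bulk of each cluster using the bottleneck property, and (b) controlling the residual correlations between $R_m$ and $R_{m'}$ when two cluster centers happen to be atypically aligned, which should be handled by a union bound over the at most $e^{O(N)}$ cluster pairs combined with the exponential-in-$p$ decay of covariances.
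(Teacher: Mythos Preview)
Your reduction from $W_{2,N}$ to a total-variation quantity on cluster weights is sound and matches the paper's first step. The gap is in how you extract macroscopic fluctuation from the perturbation.

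The approximation $R_m\approx\beta D_N(\bsig_m^\star)+\text{const}$ fails: writing $D_N=-\eps_N H_N^{\bG}+\eta_N H_N^{\bW}$, the random part $\beta\eta_N H_N^{\bW}$ varies by $\Theta(1)$ across each cluster, not $o(1)$. Indeed for $\bsig,\bsig'\in\cC_m$ at overlap $q=1-\Theta(1/(\beta p))$ one has
\[
\Var\!\big(\beta\eta_N H_N^{\bW}(\bsig)-\beta\eta_N H_N^{\bW}(\bsig')\big)=2\beta^2\eta_N^2 N\,(1-q^p)=\Theta(\beta^2)\cdot\big(1-e^{-\Theta(1/\beta)}\big)=\Omega(1),
\]
since $\eta_N^2 N=\Theta(1)$. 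This is the same order as the inter-cluster signal you wish to isolate, so the cluster integrals defining $\tilde q_m$ cannot be replaced by point evaluations; your step~(a) is not merely delicate but does not go through. (Invoking Proposition~\ref{prop:norm-bound} makes matters worse: it gives Lipschitz constant $\Theta(p)$ for $\beta\eta_N H_N^{\bW}$, hence worst-case variation $\Theta(\sqrt{N})$ across a cluster.) Your step~(b) also fails as written: at the minimal separation $s=\Delta_2=\Theta(1/\sqrt{p})$ the correlation $(1-s^2/2)^p$ is a constant in $(0,1)$, and in any case a union bound over $e^{O(N)}$ pairs against a factor $e^{-cp}$ with $p$ fixed is vacuous as $N\to\infty$.

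The paper bypasses both issues by never approximating cluster integrals and never invoking near-independence. For each well-separated pair $(i,j)$ it selects a single tensor direction $\bv^{\otimes p}$ (built from representatives $\bsig_i\in\cC_i,\bsig_j\in\cC_j$) and shows that the \emph{exact} log-ratio $L_{ij}=\log\big(\mu_\beta^{\eps}(\cC_i)/\mu_\beta^{\eps}(\cC_j)\big)$ is strictly monotone in the corresponding scalar Gaussian coordinate of $\bW$, with derivative $\Omega(r^p\eta_N\sqrt{N})$ uniformly in all other randomness; this yields $(c_0\eta_N\sqrt{N})^{-1}$-anti-concentration of each $L_{ij}$ given $\bG$. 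A short Markov/averaging argument over $(i,j)\sim\wh\mu_{-0}^{\otimes 2}$ then shows that if the discretized TV distance were small, most $L_{ij}$ would fall in a fixed $\bG$-measurable interval of length $O(\gamma)$, contradicting anti-concentration. No pointwise approximation of $D_N$ on clusters and no decorrelation between different $L_{ij}$ is needed.
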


 As explained in \cite{alaoui2022sampling,alaoui2023sampling}, transport disorder chaos poses a barrier to 
natural sampling algorithms which are stable in an appropriate Wasserstein sense.
These include the Lipschitz algorithms studied in \cite{gamarnik2019overlap,huang2021tight} as well as low-degree polynomials (via e.g. \cite[Lemma 3.4]{gamarnik2020optimization}).
More generally, it seems to be a natural candidate property to characterize 
hardness of sampling from random high-dimensional probability measures.
In the case of the spherical $p$-spin model, we expect a (possibly weaker) version of Theorem~\ref{thm:disorder-chaos} 
to hold for all $\beta\in (\beta_d,\beta_c)$. 
Analogous properties which are tailored to optimization instead of sampling can be found in
\cite{gamarnik2019overlap,gamarnik2020optimization,huang2021tight,huang2023algorithmic}. 

We also observe that the scaling of $\eps_N$ in Theorem~\ref{thm:disorder-chaos} is best possible in some generality. Namely for $\eps_N= o(1/N)$, the perturbed Gibbs measure is close to the original in total variation, which is a very strong form of disorder stability.
\begin{proposition}
\label{prop:chaos-converse}
    For any spherical or Ising mixed $p$-spin model, if \revv{$\beta_N\leq O(1)$ and} $\eps_N = o(1/N)$ then 
    \[
    \lim_{N\to\infty}
    \bbE
    \lt[
    \|\mu_{\beta_N}-\mu_{\beta_N}^{\eps_N}\|_{\sTV}
    \rt]
    =0\, .
    \]
    As a consequence, transport disorder stability holds for $(\mu_{\beta_N},\mu_{\beta_N}^{\eps_N})$.
\end{proposition}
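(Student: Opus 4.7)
The plan is to control $\bbE\|\mu_{\beta_N} - \mu_{\beta_N}^{\eps_N}\|_{\sTV}$ via Pinsker's inequality after showing that the expected Kullback--Leibler divergence is $o(1)$. Writing $H_N^{\bW}$ for the independent Hamiltonian built from $\bW$, the Hamiltonian associated to $\bG^{\eps_N}$ is $\tilde H_N = (1-\eps_N) H_N + \sqrt{2\eps_N - \eps_N^2}\, H_N^{\bW}$. Denoting the two partition functions by $Z$ and $Z'$, one writes
\[
    D_{\mathrm{KL}}(\mu_{\beta_N} \,\|\, \mu_{\beta_N}^{\eps_N}) = \beta_N\, \bbE_{\mu_{\beta_N}}\!\left[H_N - \tilde H_N\right] + \log(Z'/Z).
\]

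Upon taking expectation over the disorder, two cancellations eliminate the dangerous terms. First, since $\bG^{\eps_N}$ has the same Gaussian distribution as $\bG$, we have $\bbE[\log Z] = \bbE[\log Z']$, so the log-partition-function term contributes zero. Second, expanding $H_N - \tilde H_N = \eps_N H_N - \sqrt{2\eps_N - \eps_N^2}\, H_N^{\bW}$, the term involving $H_N^{\bW}$ vanishes in expectation: since $\mu_{\beta_N}$ is a measurable function of $\bG$ alone, conditioning on $\bG$ gives $\bbE_{\bW}\!\left[\int H_N^{\bW}(\bsig)\, d\mu_{\beta_N}(\bsig)\right] = \int \bbE_{\bW}[H_N^{\bW}(\bsig)]\, d\mu_{\beta_N}(\bsig) = 0$. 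The surviving term is the expected internal energy,
\[
    \bbE\!\left[D_{\mathrm{KL}}(\mu_{\beta_N} \,\|\, \mu_{\beta_N}^{\eps_N})\right] = \eps_N\, \beta_N\, \bbE\!\left[\bbE_{\mu_{\beta_N}}[H_N]\right] = O(\beta_N\, \eps_N\, N) = o(1),
\]
where the $O(N)$ bound on $\bbE[\bbE_{\mu_{\beta_N}}[H_N]]$ follows from $|\bbE_{\mu_{\beta_N}}[H_N]| \leq \|H_N\|_\infty = O(N)$ via Proposition~\ref{prop:norm-bound} (equivalently, from $\partial_\beta \bbE[\log Z] = O(N)$ on compact $\beta$-intervals). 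Combining Pinsker's inequality and Jensen's inequality then yields $\bbE\|\mu_{\beta_N} - \mu_{\beta_N}^{\eps_N}\|_{\sTV} \leq \sqrt{\tfrac{1}{2}\bbE[D_{\mathrm{KL}}]} = o(1)$, which is the first claim.

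For the transport consequence, a maximal coupling of $\mu$ and $\nu$ on $\cS_N$ (or $\{\pm 1\}^N$ in the Ising case) gives $W_{2,N}(\mu,\nu)^2 \leq 4\|\mu-\nu\|_{\sTV}$, with the factor $4$ being the squared $\|\cdot\|_N$-diameter of the configuration space. Taking expectations and applying Jensen again yields $\bbE[W_{2,N}(\mu_{\beta_N}, \mu_{\beta_N}^{\eps_N})] \to 0$. The only subtlety worth flagging is that a deterministic uniform bound $\|H_N - \tilde H_N\|_\infty = O(\sqrt{\eps_N}\, N)$ is \emph{not} $o(1)$ under the assumed scaling, so a pointwise comparison of the two densities is too weak to close the argument; the proof instead relies essentially on the two averaged cancellations above, which is why $\eps_N = o(1/N)$ (rather than a stronger condition) is the correct threshold.
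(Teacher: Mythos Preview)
Your proof is correct and genuinely different from the paper's. The paper argues pointwise rather than through Pinsker: it introduces the intermediate measure $\bar\mu_{\beta}\propto e^{\beta(1-\eps_N)H_N}\de\mu_0$ and the un-normalized measure $\mu_{\beta}^{\dagger}(\de\bsig)=e^{\beta\eta_N\wtH_N(\bsig)}\bar\mu_{\beta}(\de\bsig)$, then shows (i) $\|\mu_{\beta}-\bar\mu_{\beta}\|_{\sTV}=o(1)$ because the Radon--Nikodym derivative deviates from $1$ by $O(\eps_N N)=o(1)$ on the high-probability event $\sup|H_N|\leq C N$, and (ii) $\bbE\|\mu_{\beta}^{\dagger}-\bar\mu_{\beta}\|_{\sTV}=o(1)$ because $\eta_N\wtH_N(\bsig)$ is a centered Gaussian of standard deviation $\eta_N\sqrt{N}=o(1)$, so $\bbE|e^{\beta\eta_N\wtH_N}-1|=o(1)$ pointwise in $\bsig$. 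Your route instead collapses everything into the single identity $\bbE[D_{\mathrm{KL}}(\mu_{\beta}\|\mu_{\beta}^{\eps_N})]=\eps_N\beta\,\bbE[\bbE_{\mu_{\beta}}H_N]$, using the two cancellations you name (equality in law of $\bG$ and $\bG^{\eps_N}$ to kill $\bbE\log(Z'/Z)$; independence of $\bW$ from $\mu_{\beta}$ to kill the $H_N^{\bW}$ term). This is shorter and makes the threshold $\eps_N=o(1/N)$ transparent via the $O(N)$ internal energy; the paper's decomposition, on the other hand, separates the two sources of smallness ($\eps_N N\to 0$ for the drift, $\eta_N\sqrt{N}\to 0$ for the noise) and does not rely on the exact distributional symmetry $\bG\stackrel{d}{=}\bG^{\eps_N}$, so it would adapt more readily to perturbations that are not exactly distribution-preserving. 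Your closing remark that the naive bound $\|H_N-\tilde H_N\|_\infty=O(\sqrt{\eps_N}N)$ is insufficient is apt; the paper circumvents this precisely by splitting the perturbation as above rather than bounding it as a whole.
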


The onset of disorder chaos at $\eps_N\asymp 1/N$ was previously obtained in \cite{subag2017geometry} for pure spherical spin glasses at \emph{sufficiently low} temperature, which admit a different $1$-RSB decomposition into clusters of macroscopic Gibbs weight.
Proposition~\ref{prop:chaos-converse} simplifies and strengthens the analogous result \cite[Proposition 43]{subag2017geometry}.

As pointed out above, our notion of chaos is stronger than the one of \cite{subag2017geometry}, which we will refer to as ``overlap disorder chaos''. According to the latter,
the correlated measures $\mu_{\beta}$, $\mu_{\beta}^{\eps_N}$ 
exhibit disorder chaos if the overlap $\<\bsig,\bsig'\>/N$ concentrates
around a non random value $q_0$ 
when $(\bsig,\bsig')\sim \mu_{\beta}\otimes\mu_{\beta}^{\eps_N}$.
\revv{(Here and below we write $\mu\otimes\mu'$ for the product of probability measures $\mu$ and $\mu'$.)}
This overlap-based definition of chaos has been standard in the probability and statistical physics literatures, e.g., see \cite{bray1987chaotic,krzkakala2005disorder,chatterjee2009disorder} and is technically
convenient. However, it can be misleading: for instance
$\mu_{\beta}=\mu_{\beta}^{\eps_N}=\mu_0$ (the uniform measure)
exhibits overlap disorder chaos.

An important advantage of our transport definition is that it behaves nontrivially even when overlaps converge in probability to $0$.
In \cite{alaoui2022sampling}, we gave an explicit sampling algorithm proving that the Sherrington--Kirkpatrick model is transport disorder stable in the entire replica-symmetric phase $\beta<1$ (the sharp threshold following being proven in subsequent work by Celentano \cite{celentano2022sudakov}).
For completeness, Appendix~\ref{sec:trivial-overlaps} shows that
overlap disorder chaos holds in the entire replica-symmetric phase, for all mixed $p$-spin models without external field. This further justifies our transport-based definition of chaos.

\section{Partition function estimates}

We define the limiting annealed and quenched free energies,
 which agree for all $\beta\leq \beta_c$: 
%
\begin{equation}
\label{eq:free-energy-def}
\begin{aligned}
    F_{\beta} :=  \lim_{N\to\infty}
    \frac{1}{N}
    \bbE
    \log  
    \int e^{\beta H_N(\bsig)}
    &\de \mu_0(\bsig)  \, , ~~~~ F^{\Ann}_{\beta}
    :=
    \lim_{N\to\infty}
    \frac{1}{N}
    \log \bbE 
    \int e^{\beta H_N(\bsig)}
    \de \mu_0(\bsig) \, ,
    \\
    F_{\beta} &= F^{\Ann}_{\beta} = \frac{\beta^2}{2} \,, ~~~ \forall \beta \le\beta_c(p)\, .
\end{aligned}
\end{equation}
The identity of annealed
and quenched free energy for $\beta \le\beta_c(p)$ follows from Parisi's formula
\cite{talagrand2006free,chen2013aizenman} which we recall below in the form given by Crisanti-Sommers 
\cite{crisanti1992spherical} and recently studied in
\cite{jagannath2018bounds}. 

This formula is a strictly convex lower semicontinuous (in the weak$^*$ topology) functional 
\revv{$\Par:\cuP([0,1))\to \reals$
on the space of probability measures with support contained inside $[0,1)$}:
\begin{align}
\Par_{\beta}(\zeta;\xi) 
&=
\frac{1}{2}\left(\int_{0}^1 \beta^2\xi'(t)
\zeta([0,t])
\de t 
+
\int_0^{\hat{q}} \frac{\de t}{\phi_{\zeta}(t)}  +\log(1-\hat{q})\right)\, ,
\label{eq:CrisantiSommers}
\\
\phi_{\zeta}(t) & = \int_{t}^1\zeta([0,s])\de s\, ,~~~ \hat{q} = \inf\{s\in [0,1) \, :\, \zeta([0,s])=1\}\revv{\,<1}\, .
\nonumber
\end{align}
Here $\xi:\bbR\to\bbR$ is an analytic function with non-negative Maclaurin coefficients and radius of convergence strictly larger than $1$, and $\xi(0)=0$.
For the case of interest (pure $p$-spin) $\xi(t)= t^p$.

The relation between $\Par_{\beta}(\zeta;\xi)$
and the free energy is given by the following variational principle \cite{talagrand2006free,chen2013aizenman}.
Let $F_{\beta}(\xi)$ be the asymptotic quenched free energy 
defined as in Eq.~\eqref{eq:free-energy-def}, except that now $H_N$ is replaced by 
$H^{(\xi)}_N$, that is
a general centered Gaussian process  with covariance 
\begin{align}
\E\big[H^{(\xi)}_N(\bsig_1) H^{(\xi)}_N(\bsig_2)\big]
=N\xi(\<\bsig_1,\bsig_2\>/N)\, .\label{eq:Hxi}
\end{align}
Explicitly
\begin{equation}\label{eq:_mixed}
F_{\beta}(\xi) := \lim_{N\to \infty}\frac{1}{N}
    \E\log 
    \int e^{\beta H_N^{(\xi)}(\bsig)}
    \de\mu_0(\bsig)\, .
    \end{equation}
Then we have
\begin{align}
F_{\beta}(\xi) = \min_{\zeta\in\cuP([0,1])}\Par(\zeta;\xi)\, .\label{eq:GeneralVariational}
\end{align}
Below we will make use of this general variational principle.
\revv{Recall from just below \eqref{eq:FP-def} that $\cS_N(q,\eps)$ is} the set of pairs of point with overlap $q$: 
\[
    \cS_N(q,\eps)
    =
    \lt\{
    (\bsig,\bsig')\in\cS_N^2~:~
    \lt|\frac{\la\bsig,\bsig'\ra}{N}-q\rt|\leq \eps.
    \rt\}.
\]

The following upper bound will be useful in the \revv{sequel}.
\begin{lemma}\label{lemma:RS-Formula}
For any $t\in [0,1]$, we have
\begin{align}
F_{\beta}(\xi)\le \Par_{\beta,\sRS}(t;\xi):= \frac{\beta^2}{2}
\big[\xi(1)-\xi(t)\big]+\frac{1}{2}\frac{t}{1-t}+\frac{1}{2}\log(1-t)\,.
\end{align}
\end{lemma}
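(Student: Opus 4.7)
The plan is to apply the Parisi variational principle \eqref{eq:GeneralVariational} to a one-atom (replica-symmetric) test measure and read off the claimed identity. Since $F_{\beta}(\xi) = \min_{\zeta\in\cuP([0,1))}\Par_{\beta}(\zeta;\xi)$, any specific choice of $\zeta$ yields an upper bound, so the entire argument reduces to evaluating the Crisanti--Sommers functional on a single delta measure.

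For $t\in [0,1)$, I would take $\zeta=\delta_t$, so that $\zeta([0,s]) = \bone\{s\ge t\}$ and $\hat{q}=\inf\{s:\zeta([0,s])=1\}=t$. Plugging this into \eqref{eq:CrisantiSommers}, the three integrals become elementary: the first term is
\[
\int_0^1\beta^2\xi'(s)\,\zeta([0,s])\,\de s
=\beta^2\int_t^1\xi'(s)\,\de s
=\beta^2\bigl(\xi(1)-\xi(t)\bigr).
\]
Next, $\phi_{\zeta}(s)=\int_s^1\bone\{u\ge t\}\,\de u = 1-t$ for all $s\in[0,t]$, so
\[
\int_0^{\hat{q}}\frac{\de s}{\phi_{\zeta}(s)}
=\int_0^{t}\frac{\de s}{1-t}
=\frac{t}{1-t},
\]
and the last term is simply $\log(1-\hat{q})=\log(1-t)$. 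Summing and multiplying by $\tfrac12$ gives exactly $\Par_{\beta,\sRS}(t;\xi)$, and the Parisi bound $F_{\beta}(\xi)\le\Par_{\beta}(\delta_t;\xi)$ yields the lemma.

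For the endpoint $t=1$ the measure $\delta_1$ is not admissible in $\cuP([0,1))$, but the right-hand side diverges to $+\infty$ as $t\uparrow 1$ (the $\tfrac{t}{2(1-t)}$ term dominates the logarithmic singularity), so the claim is vacuous there. No real obstacle arises — this is a bookkeeping computation whose only mild subtlety is matching the Crisanti--Sommers conventions (support in $[0,1)$, the $\hat q$ cutoff in the integral of $1/\phi_\zeta$) to the formula as written; once $\zeta=\delta_t$ is substituted, everything reduces to one-line integrals.
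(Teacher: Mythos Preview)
Your proof is correct and follows exactly the same approach as the paper, which simply says ``Substitute $\zeta=\delta_t$ in Eq.~\eqref{eq:GeneralVariational}.'' You have merely spelled out the three elementary integrals and handled the $t=1$ endpoint explicitly, which the paper leaves to the reader.
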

\begin{proof}
Substitute $\zeta=\delta_t$ in Eq.~\eqref{eq:GeneralVariational}.
\end{proof}

\subsection{Planted model and contiguity}

\revv{
Below we define a ``planted'' model (i.e.\ distribution) $\nu_{\pl}$ on pairs $(\bx,\bG) \in \cS_N \times (\bbR^{N})^{\otimes p}$ and show that for $\beta\leq \beta_c(p)$, it enjoys contiguity at exponential scale with the original ``random'' model $\nu_{\rd}$ in which $\bx$ is drawn from the spherical spin glass Gibbs measure corresponding to disorder $\bG$.
First, the law of the planted model is given by:
}
\begin{equation}
    \nu_{\pl}(\de \bx, \de \bG) 
    = 
    \frac{1}{Z_{\pl}}\, 
    \exp\Big(-\frac{1}{2}
    \lt\|\bG - \frac{\beta \bx^{\otimes p}}{N^{\frac{p-1}{2}}} \rt\|_{F}^2 \, 
    \Big)  \, \mu_0(\de \bx) \, \de \bG \, ,
\end{equation}
where $\de \bG$ is Lebesgue measure on $ (\bbR^{N})^{\otimes p}$. Notice that the normalizing constant
\begin{align}\label{eq:Zpl}
    Z_{\pl} := \int\! \exp\Big(- \frac{1}{2}
    \lt\|\bG - \frac{\beta \bx^{\otimes p}}{N^{\frac{p-1}{2}}} \rt\|_{F}^2 \, 
    \Big)\, \de \bG 
\end{align}
is independent of $\bx \in \cS_N$. 
It is easy to see by symmetry that the marginal distribution of $\bx$ under $\nu_{\pl}$ is $\mu_0$. Meanwhile, under the conditional law $\nu_{\pl}( \, \cdot \, | \bx)$ the tensor $\bG$ has a rank-one spike $\beta \bx^{\otimes p}/N^{\frac{p-1}{2}}$. Namely, under $\nu_{\pl}( \, \cdot \, | \bx)$, we have
 \begin{align}
     \bG = \frac{\beta}{N^{\frac{p-1}{2}}}\bx^{\otimes p}
     +
     \bW\, ,\label{eq:PlantedDef}
 \end{align}
 for $\bW$ a tensor with i.i.d.\ standard normal entries independent of
 $\bx$.
Meanwhile, the conditional law $\nu_{\pl}( \, \cdot \, | \bG)$ of $\bx$ given $\bG$ is the Gibbs measure $\mu_{\beta}$ (since $\|\bx\|_2^2$ is constant for $\bx\in \cS_N$).  

Let $\nu_{\rd}$ be the distribution  of $(\bx,\bG) \in \cS_N \times (\bbR^{N})^{\otimes p}$, such that
$\bG\in (\reals^{N})^{\otimes p}$ has i.i.d. standard normal entries
and, conditionally on $\bG$, $\nu_{\rd}(\,\cdot\, |\bG)=\mu_{\beta}(\,\cdot\,)$.
Namely, this is the joint distribution of the disorder under our original
model and a configuration from the corresponding Gibbs measure.
Then we have
\[
    \nu_{\pl}(\de\bsig,\de \bG) 
    =
    \nu_{\rd}(\de\bsig,\de \bG)\, \wtZ_{\beta}(\bG)
\]
where $\wtZ_{\beta}(\bG)$ is the rescaled partition function for the pure $p$-spin model,
\begin{equation}
    \wtZ_{\beta}(\bG)
    :=
    \frac{\int
    e^{\beta H_N(\bx)}\de \mu_0(\bx)}
    {\bbE\int
    e^{\beta H_N(\bx)}\de \mu_0(\bx)}
    \, .
\end{equation}

To transfer results on the planted model, we require a contiguity result, or equivalently control on the fluctuations of the 
partition function $\wtZ_{\beta}(\bG)$.
A sharp result of this type was proven in 
\cite{jagannath2020statistical} for $p\ge 6$ even.  
\begin{theorem}[Theorem 1.1 in \cite{jagannath2020statistical}]
Suppose $p\ge 6$ is even and $\beta<\beta_c(p)$. Then we have
\begin{align}
\lim_{N\to\infty}\big\|\nu_{\pl}-\nu_{\rd}\big\|_{\sTV} = 0\, .
\end{align}
\end{theorem}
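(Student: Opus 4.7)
The plan is to reduce the claim to $L^1$ convergence of the normalized partition function $\wtZ_{\beta}(\bG)$ to $1$. Since $\nu_{\pl}$ and $\nu_{\rd}$ are mutually absolutely continuous with Radon--Nikodym derivative $\de\nu_{\pl}/\de\nu_{\rd}=\wtZ_{\beta}(\bG)$ (independent of $\bsig$) and $\E[\wtZ_{\beta}]=1$ by construction, one has the identity
\[
\big\|\nu_{\pl}-\nu_{\rd}\big\|_{\sTV}
=\tfrac12\,\E\big|\wtZ_{\beta}(\bG)-1\big|,
\]
so it suffices to show $\wtZ_{\beta}\to 1$ in $L^1(\mathbb{P})$. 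Because the mean is already $1$, this reduces to convergence in probability together with uniform integrability, both of which I would extract from a (possibly truncated) second-moment computation.

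A direct annealed calculation using the Gaussian covariance of $H_N$ yields
\[
\E\!\big[\wtZ_{\beta}^2\big]
=\int_{-1}^{1}\rho_N(q)\,\exp\!\big(\beta^2 N q^p\big)\,\de q,
\]
where $\rho_N(q)\propto (1-q^2)^{(N-3)/2}$ is the overlap density of two independent uniform points on $\cS_N$. By Laplace's method, the large-$N$ asymptotics are controlled by the critical points of $\Phi(q):=\beta^2 q^p+\tfrac12\log(1-q^2)$ on $(-1,1)$. One always has the symmetric saddle $\Phi(0)=0$ (which contributes $1$ to the limit), and the criterion for the naive second moment to yield exactly $1$ is that $\Phi(q)\le 0$ for all $q\in(-1,1)$. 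A short calculation shows that there exists an intermediate threshold $\beta_{2M}(p)<\beta_c(p)$ above which a nontrivial pair of saddles $\pm q_\star\in(0,1)$ with $\Phi(q_\star)>0$ appears, so the naive second moment diverges in part of the RS phase and some truncation is unavoidable.

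To circumvent this, I would employ a conditional second moment: restrict the partition function to a typicality set $A_N\subseteq\cS_N$ on which $H_N(\bsig)$ is close to its typical value under $\mu_{\beta}$. Writing $Z_N^{\mathrm{trunc}}=\int_{A_N}e^{\beta H_N(\bsig)}\,\de\mu_0(\bsig)$ and its rescaling $\wtZ_{\beta}^{\mathrm{trunc}}$, the argument proceeds in two steps. The first is to show $\E[Z_N^{\mathrm{trunc}}]/\E[Z_N]\to 1$, via Gaussian concentration of $H_N$ combined with a first-order analysis through the Crisanti--Sommers formula~\eqref{eq:CrisantiSommers}. The second is to show $\E[(\wtZ_{\beta}^{\mathrm{trunc}})^2]\to 1$, by arguing that the typicality constraint imposed on both coordinates $(\bsig,\bsig')\in A_N\times A_N$ exponentially suppresses pairs with overlap near $\pm q_\star$. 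Together these imply $\wtZ_{\beta}^{\mathrm{trunc}}\to 1$ in $L^2$; combined with the sandwich $0\le Z_N-Z_N^{\mathrm{trunc}}\le Z_N$ and $\E[Z_N-Z_N^{\mathrm{trunc}}]=o(\E[Z_N])$, this upgrades to $\wtZ_{\beta}\to 1$ in $L^1$.

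The hardest step will be the truncated second moment: making the overlap suppression quantitative all the way up to $\beta_c(p)$. This is where the hypotheses $p\ge 6$ and $p$ even enter. Parity forces $\Phi$ to be symmetric, so saddles come in matched pairs $\pm q_\star$ arising from the symmetry $H_N(-\bsig)=H_N(\bsig)$; these must be handled simultaneously via a $\bbZ_2$ quotient. The bound $p\ge 6$ provides the separation between the typical energy band of $\mu_{\beta}$ and the energy level of pairs at overlap $q_\star$ that is needed for a two-replica Parisi-type variational argument (an extension of Lemma~\ref{lemma:RS-Formula} to two coupled replicas on the constrained overlap manifold) to certify the required exponential suppression. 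Carrying out this quantitative two-replica Parisi analysis, and verifying that the typicality window in $A_N$ can be chosen both narrow enough to kill the $\pm q_\star$ contribution and wide enough to capture full first-moment mass, is in my view the technically heaviest ingredient of the proof.
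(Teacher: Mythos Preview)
The paper does not prove this theorem at all: it is quoted verbatim as ``Theorem 1.1 in \cite{jagannath2020statistical}'' and used as a black box, with no accompanying proof or sketch. There is therefore no ``paper's own proof'' to compare your proposal against. The paper's actual needs are served by the much weaker Lemmas~\ref{lem:exp-contiguity-abstract} and~\ref{lem:exp-contiguity-concrete} (exponential-scale contiguity, which follows from equality of quenched and annealed free energies plus Lipschitz concentration), and it explicitly remarks that sharp contiguity for odd $p$ was open at the time of writing.

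That said, your outline does track the strategy of the cited reference: reduce TV distance to $L^1$ convergence of $\wtZ_{\beta}$ via the Radon--Nikodym identity, observe the naive second moment fails above some $\beta_{2M}(p)<\beta_c(p)$, and repair this with a conditional second moment restricted to a typicality event. Your description of the hard step---a two-replica constrained free energy estimate showing the typicality restriction kills the spurious saddle at $q_\star$---is also accurate in spirit. The one place where your sketch is vague enough to be risky is the claim that ``$p\ge 6$ provides the separation\ldots needed for a two-replica Parisi-type variational argument'': the actual mechanism in the cited work is more delicate and model-specific than a generic extension of Lemma~\ref{lemma:RS-Formula}, and you have not indicated how you would choose $A_N$ concretely or why the window can be tuned to satisfy both requirements simultaneously. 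But since the paper under review does not attempt any of this, there is nothing further to compare.
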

\begin{remark}
For all other values of $p$   \cite[Theorem 2]{montanari2015limitation}
yields the same result via a simple second moment argument. However, this requires
$\beta<\overline{\beta}(p)$, where $\overline{\beta}(p)$ is the point at which the 
second moment of $Z_N$ becomes exponentially larger than the square of the first moment.
This condition is not sharp.

We point out that similar results are available for other prior
distributions of $\bx$ (different from the uniform distribution over the sphere) from
\cite{chen2019phase,chen2021phase}.
\end{remark}

To the best of our knowledge, no sharp result is 
available for $p$ odd (the condition $\beta<\overline{\beta}(p)$ being stronger
than needed)\footnote{Subsequent to the initial posting of this paper, \cite[Section 8]{huang2024sampling} established contiguity (with log-normal fluctuations) for all $\beta<\beta_c(p)$.}. For our purposes, however, a weaker notion of
contiguity is already sufficient.
Given a sequence of events $(E_N)_{N\geq 1}$ in a sequence of probability spaces, we say that $E_N$ is \emph{exponentially likely} \revv{(at speed $N$)} if $\bbP(E_N)\geq 1-c^{-1}e^{-cN}$ for some $c>0$.
\begin{lemma}
\label{lem:exp-contiguity-abstract}
   For $p\ge 3$, $\beta\leq \beta_c(p)$ and $\eps>0$, let $\bG \sim \nu_{\rd}$. Then the event 
    \[
    \lt|\frac{1}{N}\log \wtZ_{\beta}(\bG)\rt|\leq \eps
    \]
    is exponentially likely.
\end{lemma}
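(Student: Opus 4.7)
The plan is to reduce the statement to classical Gaussian concentration for $\log Z_{\beta}(\bG)$ combined with the annealed-quenched equivalence \eqref{eq:free-energy-def} for $\beta\leq \beta_c(p)$. Observe first that under $\nu_{\rd}$, the marginal of $\bG$ is simply an i.i.d.\ standard Gaussian tensor, so we may work with Gaussian $\bG$. Since
\[
\frac{1}{N}\log \wtZ_{\beta}(\bG)
=
\frac{1}{N}\log Z_{\beta}(\bG) - \frac{1}{N}\log \bbE\, Z_{\beta}(\bG),
\]
and $\bbE\, Z_{\beta}(\bG) = \exp(N\beta^2/2)$ by Fubini and Gaussian moments, it suffices to show that $\frac{1}{N}\log Z_{\beta}(\bG)$ concentrates exponentially around $\beta^2/2$.

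The first step is the concentration inequality. For any fixed $\bsig \in \cS_N$, the map $\bG \mapsto H_N(\bsig) = N^{-(p-1)/2}\la\bG,\bsig^{\otimes p}\ra$ is linear with gradient of Frobenius norm exactly $\sqrt{N}$. Consequently, $\bG\mapsto \log Z_{\beta}(\bG)$ has gradient at $\bG$ equal to $\beta\,\bbE_{\mu_{\beta}}[N^{-(p-1)/2}\bsig^{\otimes p}]$, which by Jensen has Frobenius norm at most $\beta\sqrt{N}$. So $\log Z_{\beta}$ is $\beta\sqrt{N}$-Lipschitz in $\bG$ in Frobenius norm, and the Gaussian concentration inequality (e.g.\ Borell--TIS) yields
\[
\bbP\!\left(\left|\log Z_{\beta}(\bG) - \bbE\log Z_{\beta}(\bG)\right| > \tfrac{\eps}{2}N\right)
\leq 2\exp\!\left(-\tfrac{\eps^2 N}{8\beta^2}\right).
\]

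The second step is to show that $\frac{1}{N}\bbE\log Z_{\beta}(\bG)$ is within $\eps/2$ of $\beta^2/2$ for all large $N$. This is exactly the statement that the quenched free energy equals the annealed free energy in the replica-symmetric phase, which is \eqref{eq:free-energy-def}: by the Parisi formula of \cite{talagrand2006free,chen2013aizenman} one has $\lim_{N\to\infty}\tfrac{1}{N}\bbE\log Z_{\beta}=F_\beta = \beta^2/2 = F^{\Ann}_\beta$ for all $\beta\leq \beta_c(p)$. Hence for all $N$ large enough (depending on $\beta,\eps,p$),
\[
\left|\tfrac{1}{N}\bbE\log Z_{\beta}(\bG) - \tfrac{\beta^2}{2}\right|\leq \tfrac{\eps}{2}.
\]

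Combining these two estimates via the triangle inequality gives $|\tfrac{1}{N}\log \wtZ_{\beta}(\bG)|\leq \eps$ with probability at least $1-2\exp(-\eps^2 N/(8\beta^2))$, which is what we wanted. There is no serious obstacle here: both ingredients — Gaussian concentration and annealed-quenched equality at high temperature — are standard, and the only mild care required is to identify the Lipschitz constant of $\log Z_\beta$ as a function of $\bG$, which follows from the homogeneity scaling $\|\bsig^{\otimes p}/N^{(p-1)/2}\|_F=\sqrt N$.
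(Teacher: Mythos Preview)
Your proof is correct and follows exactly the same approach as the paper's: the paper's one-line proof invokes precisely the two ingredients you spell out, namely the equality of annealed and quenched free energies in \eqref{eq:free-energy-def} together with Gaussian concentration of $\log Z_{\beta}(\bG)$ as a Lipschitz function of the disorder. You have simply made the Lipschitz constant and the triangle-inequality splitting explicit.
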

\begin{proof}
    This is immediate from the equality in \eqref{eq:free-energy-def} and the exponential
     concentration of free energy (the latter is a Lipschitz function of the Gaussian disorder). 
\end{proof}

The next lemma allows us to transfer exponentially high probability results from $\nu_{\pl}$ to $\nu_{\rd}$.
\begin{lemma}
\label{lem:exp-contiguity-concrete}
    If the sequence $(E_N)_{N\geq 1}$ is exponentially likely under $\nu_{\pl}$, then it is also exponentially likely under $\nu_{\rd}$.
\end{lemma}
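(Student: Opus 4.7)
The plan is a standard change-of-measure argument that combines the Radon--Nikodym identity $\de\nu_{\pl}/\de\nu_{\rd} = \wtZ_{\beta}(\bG)$ (recorded in the paragraph preceding the lemma) with the lower-tail control on $\wtZ_{\beta}(\bG)$ supplied by Lemma~\ref{lem:exp-contiguity-abstract}. Concretely, I would fix a small $\eps > 0$ (to be chosen in terms of the given rate $c$) and introduce the auxiliary event
\[
A_N := \left\{\wtZ_{\beta}(\bG) \geq e^{-\eps N}\right\}.
\]
By Lemma~\ref{lem:exp-contiguity-abstract}, $A_N$ is exponentially likely under $\nu_{\rd}$, so $\nu_{\rd}(A_N^c) \leq C_0 e^{-c_0 N}$ for some $c_0 = c_0(\eps) > 0$.

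Next I would split $\nu_{\rd}(E_N^c) \leq \nu_{\rd}(E_N^c \cap A_N) + \nu_{\rd}(A_N^c)$. On $A_N$ the inequality $1 \leq e^{\eps N}\wtZ_{\beta}(\bG)$ holds, so the Radon--Nikodym identity gives
\[
\nu_{\rd}(E_N^c \cap A_N) \leq e^{\eps N}\, \bbE_{\nu_{\rd}}\!\big[\wtZ_{\beta}(\bG)\, \bfone_{E_N^c \cap A_N}\big] \leq e^{\eps N}\, \nu_{\pl}(E_N^c).
\]
Since by hypothesis $\nu_{\pl}(E_N^c) \leq c^{-1} e^{-cN}$, the choice $\eps := c/2$ bounds the first term by $c^{-1} e^{-cN/2}$. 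Combining with the estimate on $\nu_{\rd}(A_N^c)$ yields $\nu_{\rd}(E_N^c) \leq C' e^{-c' N}$ for $c' := \min(c/2, c_0(c/2)) > 0$, which is exactly what needs to be shown.

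There is no substantive obstacle here: this is the quantitative form of contiguity, and both ingredients (the explicit Radon--Nikodym derivative and the exponential concentration of the normalized log-partition function) are already provided by the preceding discussion. The only point requiring any care is balancing $\eps$ against the decay rate $c$ of the hypothesis so that the Markov-type bound $e^{\eps N}\nu_{\pl}(E_N^c)$ still decays exponentially, which is handled by taking $\eps$ strictly smaller than $c$.
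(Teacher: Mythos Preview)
Your proof is correct and essentially identical to the paper's: both split according to whether $\wtZ_{\beta}(\bG)\ge e^{-\eps N}$, use the change-of-measure bound on the good part, invoke Lemma~\ref{lem:exp-contiguity-abstract} for the bad part, and take $\eps=c/2$. The only cosmetic difference is that the paper writes the same decomposition starting from $\nu_{\rd}(E_N^c)=\nu_{\pl}(\wtZ_{\beta}^{-1}\,1_{E_N^c})$ rather than naming the event $A_N$ explicitly.
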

\begin{proof}
    We work with the obvious complementary notion of exponential unlikeliness: $\nu_{\pl}(E^c_N)\leq c^{-1}e^{-cN}$. We  have
    \begin{align*}
    \nu_{\rd}(E^c_N)
    &=
    \nu_{\pl}\big(\wtZ_{\beta}^{-1}\cdot 1_{E^c_N}\big)
    \\
    &\leq 
    e^{\eps N}
    \nu_{\pl}\big(E_N^c\big)
    +
    \nu_{\pl}\big(\wtZ_{\beta}^{-1}\cdot 1_{\wtZ_{\beta}^{-1}\geq e^{\eps N}}1_{E^c_N}\big)
    \\
    &=
    e^{\eps N}
    \nu_{\pl}\big(E^c_N\big)
    +
    \nu_{\rd}\big(1_{\wtZ_{\beta}^{-1}\geq e^{\eps N}}1_{E^c_N}\big)
    \\
    &\leq 
    c^{-1} e^{(\eps-c) N}
    +
    \nu_{\rd}\big(1_{\wtZ_{\beta}^{-1}\geq e^{\eps N}}\big)
    .
    \end{align*}
    Taking $\eps=c/2$ and applying Lemma~\ref{lem:exp-contiguity-abstract} to the last term completes the proof.
\end{proof}

\subsection{Non-monotonicity of Franz--Parisi potential}

For $\bsig\sim\mu_{\beta}$ a typical Gibbs sample, we consider the Franz--Parisi potential
\begin{equation}\label{eq:fp}
    \cF_{\beta}(q) :=
    \lim_{\eps\downarrow 0}
    \plim_{N\to\infty} 
    \frac{1}{N}
    \log 
    \int e^{\beta H_N(\bsig')}
    1_{(\bsig,\bsig')\in \cS_N(q,\eps)}
    \de\mu_0(\bsig')\, .
\end{equation}
The proofs presented below will imply existence of these limits in the regime of interest.
\revv{
Our strategy will be to compute \eqref{eq:fp} in the planted model where $\bsig$ is drawn uniformly from the sphere. Exponential concentration of the involved free energies lets us transfer the obtained formula to the neighborhood of a typical Gibbs sample. 
}

To establish shattering, we would like to show that $\cF_{\beta}(q)$ increases for $q$ inside an interval $I_p=[\lbq_p,\ubq_p]$.
This will allow us to construct a shattered decomposition of $\mu_{\beta}$ by
constructing balls of suitable radius around typical Gibbs samples. 
To guarantee disjointness of the resulting clusters, we will need to ensure that the ratio $(1-\lbq_p)/(1-\ubq_p)$ is large.
(Section \ref{sec:Improved} removes the constraint that this ratio is large,
at the price of a more elaborate construction of clusters.)

In this subsection we prove the following result ensuring such behavior for suitable $(\beta,p)$.

\begin{proposition}
\label{prop:Franz-Parisi-deriv-positive}
    There exists an absolute constant $C>0$ such that the following holds. 
    For $p\ge 3$ and $\beta<\beta_c(p)$, we have
    \[
    \frac{\de}{\de q} \cF_{\beta}(q)>0\, ,
    \quad\quad
    \forall~
    q\in \lt[1-\frac{1}{2p},1-\frac{C}{\beta p}\rt].
    \]
\end{proposition}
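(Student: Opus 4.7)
The plan is to reduce to the planted model via contiguity and then invoke the Parisi variational principle to control the derivative. By Lemma~\ref{lem:exp-contiguity-concrete}, it suffices to evaluate $\cF_\beta(q)$ under $\nu_\pl$. Under $\nu_\pl(\,\cdot\,\vert\,\bsig)$ we have $\bG = \beta\bsig^{\otimes p}/N^{(p-1)/2}+\bW$ for an independent standard Gaussian tensor $\bW$, so for $\bsig'$ with $\langle\bsig,\bsig'\rangle/N = q$,
\[
\beta H_N(\bsig')\;=\;\beta^2Nq^p+\beta\wt H_N(\bsig'),
\]
where $\wt H_N$ is the pure $p$-spin Hamiltonian built from $\bW$. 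Writing $\bsig'=q\bsig+\sqrt{1-q^2}\,\btau$ with $\btau$ on the orthogonal sphere, the overlap-$q$ spherical cap contributes $\tfrac12\log(1-q^2)$ per spin, and restricted to this slice $\wt H_N$ becomes a Gaussian process in $\btau$ with covariance $N\xi_q(\langle\btau,\btau'\rangle/N)$, where $\xi_q(t):=(q^2+(1-q^2)t)^p$. This yields the decomposition
\[
\cF_\beta(q)\;=\;\beta^2 q^p+\tfrac12\log(1-q^2)+F_\beta(\xi_q),
\]
with $F_\beta(\xi_q)$ defined by the variational principle \eqref{eq:GeneralVariational}.

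Next I would differentiate via the envelope theorem applied to the Crisanti--Sommers functional \eqref{eq:CrisantiSommers}: only the first term depends on $\xi_q$, so
\[
\tfrac{d}{dq}F_\beta(\xi_q)\;=\;\tfrac{\beta^2}{2}\int_0^1\partial_q\xi_q'(t)\,\zeta_q^{\star}([0,t])\,\de t,
\]
where $\zeta_q^{\star}$ is the Parisi minimizer. A direct computation gives $\partial_q\xi_q'(t)=2pq\,v^{p-2}(p-1-pv)$ with $v:=q^2+(1-q^2)t$. For $q\ge 1-1/(2p)$ we have $q^2\ge 1-1/p$, hence $v\ge 1-1/p$ on $[0,1]$ and $\partial_q\xi_q'(t)\le 0$, so $F_\beta(\xi_q)$ is non-increasing in $q$. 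Establishing $\tfrac{d}{dq}\cF_\beta(q)>0$ therefore reduces to the scalar inequality
\[
p\beta^2 q^{p-1}-\tfrac{q}{1-q^2}\;>\;\bigl|\tfrac{d}{dq}F_\beta(\xi_q)\bigr|,
\]
combining the positive contribution $p\beta^2 q^{p-1}$ from $\beta^2 q^p$ and the negative contribution $-q/(1-q^2)$ from the cap volume.

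To quantify $|\tfrac{d}{dq}F_\beta(\xi_q)|$ I would identify $\zeta_q^{\star}$ as the replica-symmetric atom $\delta_{t^\star(q)}$, where $t^\star(q)$ is the small positive root of $\beta^2\xi_q'(t)(1-t)^2=t$; this is consistent with the fact that $\xi_q'$ is of order $O(1/p)$ throughout the target interval, making the restricted model effectively high-temperature. Under this identification, setting $1-q=a/p$ with $a\in[C/\beta,\,1/2]$ and expanding to leading order in $1/p$ reduces the required inequality to a bound of the form $2a\beta^2 e^{-a}(1-e^{-a})>1+o(1)$, which holds for all such $a$ provided the absolute constant $C$ is taken large enough (note $C$ also enforces $\beta\ge 2C$, ensuring the interval is nonempty). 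I expect the main obstacle to be the rigorous identification of $\zeta_q^{\star}$: at the left endpoint $q=1-1/(2p)$ the local stability condition $\beta^2\xi_q''(0)<1$ for $\Par_{\sRS}$ can fail when $\beta\gtrsim 1$, so the argument must go beyond a local analysis, perhaps by combining the replica-symmetric upper bound of Lemma~\ref{lemma:RS-Formula} with a matching lower bound obtained from a second-moment computation for the constrained partition function on the orthogonal sphere, paralleling the contiguity arguments of the preceding subsection.
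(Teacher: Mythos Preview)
Your decomposition $\cF_\beta(q)=\beta^2 q^p+\tfrac12\log(1-q^2)+F_\beta(\xi_q)$ and the envelope-theorem differentiation match the paper exactly, and your observation that $\partial_q\xi_q'(t)\le 0$ on the relevant range (via $v\ge 1-1/p$) is correct and useful. The divergence comes at the step where you try to bound $\bigl|\tfrac{d}{dq}F_\beta(\xi_q)\bigr|$ by identifying $\zeta_q^\star$ as a replica-symmetric atom $\delta_{t^\star(q)}$. You already flag this as the main obstacle, and it is a real one: as you note, the local stability condition for the RS minimizer can fail in the regime $q=1-\Theta(1/p)$, $\beta=\Theta(1)$, and a second-moment argument for the constrained partition function on the orthogonal sphere would have to be carried out at all $\beta<\beta_c(p)$ (up to $\sqrt{\log p}$), which is well beyond the naive second-moment threshold. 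So as written the proposal has a genuine gap.

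The paper sidesteps the identification of $\zeta_q^\star$ entirely by a change of variable trick. Writing the $q$-derivative (after integration by parts in the Crisanti--Sommers functional) as
\[
\tfrac{d}{dq}F_\beta(\xi_q)=-\beta^2 pq\,\bbE_{x\sim\zeta_q^\star}\bigl[(1-x)(q^2+(1-q^2)x)^{p-1}\bigr],
\]
one observes that the \emph{same} expectation governs the $\beta$-derivative: by convexity of $\xi_q$,
\[
\tfrac{d}{d\beta}F_\beta(\xi_q)=\beta\,\bbE_{x\sim\zeta_q^\star}[\xi_q(1)-\xi_q(x)]\;\ge\;\beta p(1-q^2)\,\bbE_{x\sim\zeta_q^\star}\bigl[(1-x)(q^2+(1-q^2)x)^{p-1}\bigr].
\]
Hence $\tfrac{d}{dq}F_\beta(\xi_q)\ge -\dfrac{\beta q}{1-q^2}\cdot\tfrac{d}{d\beta}F_\beta(\xi_q)$. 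The $\beta$-derivative is the expected energy, which is at most the ground state energy $\limsup_N \tfrac{1}{N}\E\sup_\bsig H_N^{(\xi_q)}(\bsig)$. For $q\ge 1-1/(2p)$ the mixture coefficients obey $\xi_{q,k}\le 1/k!$, so by Proposition~\ref{prop:norm-bound} this ground state is at most an absolute constant $C$ independent of $(p,\beta,q)$. Plugging back gives $\tfrac{d}{dq}\cF_\beta(q)\ge -\tfrac{C\beta}{1-q}+\tfrac{\beta^2 p}{2}$, which is positive exactly on $[1-\tfrac{1}{2p},1-\tfrac{2C}{\beta p}]$. No information about $\zeta_q^\star$ beyond its existence as a minimizer is used.
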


Before proving this estimate, we find an exact formula for $\cF_{\beta}(q)$ via the planted model. Let
\begin{equation}
\label{eq:xi-q-def}
    \xi_q(x)
    =
    (q^2+(1-q^2)x)^p-q^{2p}\, .
\end{equation}
and recall that $F_{\beta}(\xi_q)$ is the single-replica quenched free energy of the
 mixed $p$-spin model with covariance given by $\xi_q$, cf. Eq.~\eqref{eq:_mixed}.
We denote the corresponding Hamiltonian $H_{N}^{(\xi_q)}$
(this is a centered Gaussian process with covariance as in Eq.~\eqref{eq:Hxi}).
\begin{proposition}
\label{prop:band-model}
    Assume $(\bsig,\bG)\sim\nu_{\pl}$ are distributed according
    to the planted model and let $H_N$ be the Hamiltonian associated to $\bG$.
    For 
    $\bsig'\in \revv{\cS_{N}\cap \bsig^{\perp}}$
    , and $\bU\in\reals^{N\times (N-1)}$ 
    an orthonormal basis of $\bsig^{\perp}$, define
    \begin{align}
    \wt H_N(\bsig')&:=H_N(q\bsig+\sqrt{1-q^2} \bU\bsig')-H_N(q\bsig)\, ,
    \end{align}
    Then the collection $\{\wt H_N(\bsig'):\bsig'\in\revv{\cS_{N}\cap \bsig^{\perp}}\}$ is independent of $H_N(q\bsig)$ and has the same law as $\{H_{N}^{(\xi_q)}(\bsig') : \bsig'\in\revv{\cS_{N}\cap \bsig^{\perp}}\}$.
\end{proposition}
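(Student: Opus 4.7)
The plan is to reduce the statement to a direct Gaussian covariance computation by exploiting the spike structure of $\nu_{\pl}$. First I would condition on $\bsig$: under $\nu_{\pl}(\,\cdot\,|\bsig)$, Eq.~\eqref{eq:PlantedDef} gives $\bG = \beta N^{-(p-1)/2}\bsig^{\otimes p}+\bW$ with $\bW$ an i.i.d.\ standard Gaussian tensor independent of $\bsig$, so $H_N$ splits as a $\bsig$-measurable spike plus an independent pure $p$-spin Hamiltonian $\cH_N^{\bW}(\bx):=N^{-(p-1)/2}\la\bW,\bx^{\otimes p}\ra$. The crucial observation is that for every $\bsig'$ on the radius-$\sqrt{N}$ sphere of $\bsig^{\perp}$, $\la\bsig,\,q\bsig+\sqrt{1-q^2}\,\bU\bsig'\ra = qN$, so the spike contributes exactly $\beta q^p N$ to both $H_N(q\bsig+\sqrt{1-q^2}\bU\bsig')$ and $H_N(q\bsig)$ and cancels in their difference. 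Hence $\wt H_N(\bsig') = \cH_N^{\bW}(q\bsig+\sqrt{1-q^2}\bU\bsig')-\cH_N^{\bW}(q\bsig)$, and both conclusions reduce to statements about the centered Gaussian field $\cH_N^{\bW}$.

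Using the pure $p$-spin covariance $\bbE[\cH_N^{\bW}(\bx)\cH_N^{\bW}(\by)] = N^{-(p-1)}\la\bx,\by\ra^p$, I would next verify independence by a cross-covariance calculation. Setting $\btau = q\bsig+\sqrt{1-q^2}\bU\bsig'$, one has $\bbE[\wt H_N(\bsig')\,\cH_N^{\bW}(q\bsig)] = N^{-(p-1)}\bigl[\la\btau,q\bsig\ra^p - \la q\bsig,q\bsig\ra^p\bigr]$, and since $\la\btau,q\bsig\ra = q\la\btau,\bsig\ra = q^2 N = \la q\bsig,q\bsig\ra$, the two terms cancel. Joint Gaussianity then yields independence from $\cH_N^{\bW}(q\bsig)$, and because the spike part of $H_N(q\bsig)$ is deterministic once we condition on $\bsig$, independence extends to $H_N(q\bsig)$ itself.

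Finally I would compute the covariance kernel of $\{\wt H_N(\bsig')\}$. Using $\bU^{\top}\bU = \id_{N-1}$ together with $\bU\bsig'\perp\bsig$, one gets $\la\btau_1,\btau_2\ra = q^2 N + (1-q^2)\la\bsig'_1,\bsig'_2\ra$ and $\la\btau_i,q\bsig\ra = \la q\bsig,q\bsig\ra = q^2 N$, so bilinearity yields
\[
\bbE[\wt H_N(\bsig'_1)\wt H_N(\bsig'_2)]
= N\bigl[(q^2 + (1-q^2)\la\bsig'_1,\bsig'_2\ra/N)^p - q^{2p}\bigr]
= N\xi_q\bigl(\la\bsig'_1,\bsig'_2\ra/N\bigr),
\]
which matches \eqref{eq:Hxi} for the mixed model with mixture $\xi_q$. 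Since both processes are centered Gaussians on the same index set with the same covariance, they agree in law. I expect no serious obstacle here: the proposition is ultimately just the observation that in $\nu_{\pl}$ the planted spike is constant along each slice $q\bsig+\sqrt{1-q^2}\bU\cdot\{\bsig'\in\bbR^{N-1}:\|\bsig'\|^2=N\}$ and the residual fluctuations form a mixed $p$-spin field with mixture $\xi_q$. The only mild care needed is to identify $\bsig^{\perp}\cap\cS_N$ consistently with the radius-$\sqrt{N}$ sphere in $\bbR^{N-1}$ via $\bU$ when computing the relevant inner products.
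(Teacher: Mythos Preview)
Your proposal is correct and follows essentially the same route as the paper: condition on $\bsig$, observe that the spike contribution $\beta q^p N$ is constant along the band and hence cancels in $\wt H_N$, then reduce everything to a direct Gaussian covariance computation for the pure $p$-spin field driven by $\bW$. Your argument is in fact slightly more explicit than the paper's (you spell out the cross-covariance cancellation for independence and carefully note that the spike part of $H_N(q\bsig)$ is $\bsig$-measurable), but there is no substantive difference in strategy.
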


\begin{proof}
Let $f_q(\bsig'):=q\bsig+\sqrt{1-q^2} \bU\bsig'$.
    It is clear that $\wt H_N$ is a centered Gaussian process, and independence with $H_N(q\bsig)$ easily follows from the covariance formula of $H_N$. Its covariance is  
    \[
    \bbE[\wt H_N(\bsig')\wt H_N(\bsig'')]
    =
    \bbE\lt[
    \Big(H_N(f_q(\bsig'))-H_N(q\bsig)\Big)
    \Big(H_N(f_q(\bsig''))-H_N(q\bsig)\Big)
    \rt].
    \]
    Note that the subtraction defining $\wt H_N$ cancels all spike contributions, so we can evaluate the latter expectation in the random model (with no special conditioning on $\bsig$) instead of the planted one:
    \begin{align*}
    \frac{1}{N}\bbE \big[\wt H_N(\bsig')\wt H_N(\bsig'')\big]
    &= \Big(\frac{\la f_q(\bsig'),f_q(\bsig'')\ra}{N}\Big)^p-
    \Big(\frac{\la f_q(\bsig'),q\bsig\ra}{N}\Big)^p -
    \Big(\frac{\la f_q(\bsig''),q\bsig\ra}{N}\Big)^p+q^{2p}\\
    &= \Big(q^2+(1-q^2)\frac{\la \bsig',\bsig''\ra}{N}\Big)^p - q^{2p} \, .
    \end{align*}
    This proves the claim.
\end{proof}

Next we relate the Franz--Parisi potential~\eqref{eq:fp} to $F_{\beta}(\xi_q)$:
\begin{proposition}
\label{prop:Franz-Parisi-formula}
    For $\beta\leq \beta_c$,
    \[
    \cF_{\beta}(q)=
    F_{\beta}(\xi_q)
    +\beta^2 q^p+\frac{1}{2}\log(1-q^2)\, .
    \]
    Moreover exponential concentration holds, i.e.\ for any $\eta>0$ \revv{ and $q\in (-1,1)$,
    there exists $\eps_0=\eps_0(\eta,p,\beta,q)$} such that, 
    for $\eps\in (0,\eps_0)$, the event 
    \[
    \lt|
    \frac{1}{N}
    \log 
    \int e^{\beta H_N(\bsig')}
    1_{(\bsig,\bsig')\in \cS_N(q,\eps)}
    \de\mu_0(\bsig')
    -\cF_{\beta}(q)
    \rt|
    \leq \eta
    \]
    is exponentially likely.
\end{proposition}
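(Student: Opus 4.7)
The plan is to compute $\cF_{\beta}(q)$ inside the planted model $\nu_{\pl}$---where $\bsig$ is uniform on $\cS_N$ and $\bG = \beta \bsig^{\otimes p}/N^{(p-1)/2} + \bW$ for an independent Gaussian tensor $\bW$---and then transfer the resulting exponentially-likely event to $\nu_{\rd}$ via Lemma~\ref{lem:exp-contiguity-concrete}. By rotation invariance of $\nu_{\pl}$ we may condition on $\bsig$ being any fixed point of $\cS_N$. The band-restricted integral then decomposes cleanly into an explicit spike contribution, a Jacobian for the overlap constraint, and a residual partition function on the orthogonal slice controlled by Proposition~\ref{prop:band-model}.

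Concretely, parametrize points in the band $\{\bsig' : \la\bsig,\bsig'\ra/N \in [q-\eps,q+\eps]\}$ as $\bsig' = r\bsig + \sqrt{1-r^2}\,\bsig''$ with $\bsig''\in \cS_N\cap\bsig^{\perp}$ and $r\in [q-\eps,q+\eps]$. Under $\nu_{\pl}$ the Hamiltonian splits as
\[
H_N(\bsig') \;=\; \frac{\beta}{N^{p-1}}\la\bsig,\bsig'\ra^p + H_N^{\bW}(\bsig')
\;=\; \beta r^p N + r^p H_N^{\bW}(\bsig) + \wt H_N(\bsig''),
\]
where $\wt H_N$ is the centered Gaussian process of Proposition~\ref{prop:band-model}. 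The term $r^p H_N^{\bW}(\bsig)$ is a centered Gaussian of variance $O(N)$, hence $O(\sqrt{N})$ with exponentially high probability by Borell--TIS and thus negligible in the normalized logarithm. The constant $\beta r^p N = \beta q^p N + O(\eps N)$ enters the exponent multiplied by $\beta$, producing the summand $\beta^2 q^p$ up to an $O(\eps)$ error. The push-forward of $\mu_0$ onto the overlap coordinate $r=\la\bsig,\bsig'\ra/N$ has the standard density $\propto (1-r^2)^{(N-3)/2}$, so integrating over $r\in[q-\eps,q+\eps]$ yields a factor whose normalized log tends to $\tfrac{1}{2}\log(1-q^2) + O(\eps)$. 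Finally, Proposition~\ref{prop:band-model} identifies the residual integral $\int e^{\beta\wt H_N(\bsig'')}\,\de\sigma(\bsig'')$ (with $\sigma$ the uniform measure on $\cS_N\cap\bsig^{\perp}$) in distribution with the partition function of a mixed $p$-spin model with covariance $\xi_q$ on an $(N-1)$-sphere; by \eqref{eq:_mixed} its normalized log converges to $F_{\beta}(\xi_q)$.

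For exponential concentration, fix $\eps>0$ and view the band-restricted log-partition function $\log Z$ as a function of $\bW$. Its $\bW$-gradient is the conditional expectation of $\beta (\bsig')^{\otimes p}/N^{(p-1)/2}$ under the constrained Gibbs measure, hence has Frobenius norm at most $\beta\sqrt{N}$; standard Gaussian concentration therefore gives $\bbP\big(|\tfrac{1}{N}\log Z - \bbE\tfrac{1}{N}\log Z| \geq \eta/2\big)\leq 2e^{-\eta^2 N/(8\beta^2)}$. Combined with convergence of $\bbE\tfrac{1}{N}\log Z$ to the stated formula (using the already-established convergence of $F_{\beta}(\xi_q)$ from \cite{talagrand2006free,chen2013aizenman}) and choosing $\eps_0 = \eps_0(\eta,p,\beta,q)$ small enough to absorb all $O(\eps)$ errors into $\eta/2$, the desired exponentially-likely event holds under $\nu_{\pl}$. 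Lemma~\ref{lem:exp-contiguity-concrete} then transfers it to $\nu_{\rd}$, where $\bsig\sim\mu_{\beta}$ given $\bG$, completing the proof.

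The main technical obstacle is uniform control over all $\eps$-dependent errors in the $\eps\downarrow 0$ limit: the spike error $O(\eps N)$ in the exponent, the Jacobian correction, and the $r$-dependence of the orthogonal covariance $\xi_r$ (which fluctuates around $\xi_q$ as $r$ varies in $[q-\eps,q+\eps]$) must be managed simultaneously against the Gaussian concentration scale. A minor further nuisance is that the residual integral lives on an $(N-1)$-sphere of radius $\sqrt{N}$ rather than on $\cS_{N-1}$, but homogeneity of $H_N^{(\xi_q)}$ and the existence of the limit in \eqref{eq:_mixed} make this a routine $o(1)$ adjustment.
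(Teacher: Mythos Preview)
Your proposal is correct and follows essentially the same route as the paper: work in the planted model via rotation invariance, split the Hamiltonian into the spike contribution $\beta^2 q^p$, the band-volume term $\tfrac{1}{2}\log(1-q^2)$, and the orthogonal-slice partition function identified by Proposition~\ref{prop:band-model} as $F_{\beta}(\xi_q)$; then use Gaussian Lipschitz concentration and transfer to $\nu_{\rd}$ via Lemma~\ref{lem:exp-contiguity-concrete}. The only cosmetic difference is that the paper centers at $q\bsig$ and absorbs the $\eps$-variation into a single $O_{\bbP}(\eps)$ via Lipschitz continuity of $H_N$, whereas you integrate over $r\in[q-\eps,q+\eps]$ and track the $O(\eps)$ errors separately; both are equivalent.
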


\begin{proof}
Define 
\begin{align}
\cF^{\rd}_{N,\beta}(q,\eps):=\frac{1}{N}
    \log 
    \int e^{\beta H_N(\bsig')}
    1_{(\bsig,\bsig')\in \cS_N(q,\eps)}
    \de\mu_0(\bsig')\, ,\label{eq:Frd}
\end{align}
where the superscript indicates that $\cF^{\rd}_{N,\beta}(q,\eps)$ is a function of $(\bsig,\bG)\sim\nu_{\rd}$. We denote by $\cF^{\pl}_{N,\beta}(q,\eps)$
the corresponding random variable for $(\bsig,\bG)\sim\nu_{\pl}$.

Recall that, under $\nu_{\pl}$, $\bG$ is given by
 \begin{align}
     \bG = \frac{\beta}{N^{\frac{p-1}{2}}}\bsig^{\otimes p}
     +
     \bW\, ,\label{eq:PlantedDef2}
 \end{align}
 where $\bW$ is a Gaussian tensor independent of $\bsig$.
\revv{ By rotational invariance, we may assume that $\bsig$ is fixed in \eqref{eq:PlantedDef2}.  
Using Lipschitz concentration for $\cF^{\pl}_{N,\beta}(q,\eps)$ in the Gaussian random variables $\bW$ yields:  
}
%
\begin{align} 
\prob\Big(\big|\cF^{\pl}_{N,\beta}(q,\eps)-\E\cF^{\pl}_{N,\beta}(q,\eps)\big|\ge t\Big)
\le 2\, e^{-c_0 Nt^2}\, ,
\end{align}
for some $c_0=c_0(\beta,p,q,\eps)$.

Hence, by Lemma~\ref{lem:exp-contiguity-concrete} it is sufficient 
to control the typical value of $\cF^{\pl}_{N,\beta}(q,\eps)$. Note that 
\begin{align*}
\cF^{\pl}_{N,\beta}(q,\eps) =
    \frac{1}{N}\beta H_N(q\bsig)+ \frac{1}{N}  \log 
    \int e^{\beta [H_N(\bsig')-H_N(q\bsig)]}
    1_{(\bsig,\bsig')\in \cS_N(q,\eps)}
    \de\mu_0(\bsig')\, .
\end{align*}
By Eq.~\eqref{eq:PlantedDef2}, we have
\begin{align*}
\frac{1}{N}   \beta H_N(q\bsig) = \beta^2q^p + 
\frac{\beta} {N^{(p+1)/2}} \la\bW,\bsigma^{\otimes p}\ra\, .
\end{align*}
The second term is normal with mean zero and variance $\beta^2/N$,
and hence we can safely neglect it.
Recall the definition of the effective mixture function $\xi_q$ from \eqref{eq:xi-q-def}. 
Using Proposition~\ref{prop:band-model} and Lipschitz continuity of $H_N$ \revv{(i.e. Proposition~\ref{prop:norm-bound} with $j=1$)}, we get
\begin{align}
\label{eq:eps-error-bound}
\cF^{\pl}_{N,\beta}(q,\eps) \stackrel{d}{=}
     \beta^2q^p   + \frac{1}{2}  \log (1-q^2)+\frac{1}{N}\log
    \int_{\revv{\cS_{N}\cap \bsig^{\perp}}} e^{\beta H^{(\xi_q)}_N(\bsig')}
    \de\mu_0^{N-1}(\bsig') +O_{\bbP}(\eps)\, ,
\end{align}
where the term $\frac{1}{2}\log(1-q^2)$ accounts for the decreased volume of the band and $\mu_0^{N-1}$ denotes the uniform measure on the subsphere $\revv{\cS_{N}\cap \bsig^{\perp}}$.
Finally, the desired result follows since the expectation of the third term above is $F_{\beta}(\xi_q)$ in the limit $N \to \infty$.
\revv{Indeed, up to the (negligible) rescaling of the coefficients of $H^{(\xi_q)}_N$ by $(\frac{N}{N-1})^{(p-1)/2}$, it is exactly the free energy of a spherical spin glass on $\cS_{N-1}$.}
\end{proof}

Having established in particular that the limit $\cF_{\beta}(q)$ exists, we stop to record the following simple estimate. 

\begin{proposition}
\label{prop:Jensen-bound}
For any $\beta<\beta_c(p)$ of Eq.~\eqref{eq:PspinExplicit}, and all $q\in [0,1]$, we have
\begin{align}
 \cF_{\beta}(q)&\leq \cF_{\beta}^{\sRS}(q)\, ,\label{eq:RS-Franz-Parisi}
 \\
 \cF_{\beta}^{\sRS}(q)& := \frac{\beta^2}{2}\big(1+q^p\big)+\frac{q}{2}+\frac{1}{2}\log(1-q)\, .
 \end{align}
\end{proposition}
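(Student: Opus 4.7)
The plan is to combine the exact formula for $\cF_{\beta}(q)$ from Proposition~\ref{prop:Franz-Parisi-formula} with the replica-symmetric upper bound on the mixed $p$-spin free energy from Lemma~\ref{lemma:RS-Formula}, evaluated at a carefully chosen value of $t$.

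First, I would recall from Proposition~\ref{prop:Franz-Parisi-formula} that
\[
\cF_{\beta}(q) = F_{\beta}(\xi_q) + \beta^2 q^p + \frac{1}{2}\log(1-q^2),
\]
where $\xi_q(x) = (q^2 + (1-q^2)x)^p - q^{2p}$ as in \eqref{eq:xi-q-def}. Using $\log(1-q^2) = \log(1-q) + \log(1+q)$, the desired inequality $\cF_{\beta}(q)\leq \cF_{\beta}^{\sRS}(q)$ reduces algebraically to
\[
F_{\beta}(\xi_q) \leq \frac{\beta^2}{2}(1-q^p) + \frac{q}{2} - \frac{1}{2}\log(1+q).
\]

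Next, I would apply Lemma~\ref{lemma:RS-Formula} to the mixture $\xi_q$. Note $\xi_q(1) = 1 - q^{2p}$ and $\xi_q(t) = (q^2+(1-q^2)t)^p - q^{2p}$, so the lemma gives, for any $t \in [0,1)$,
\[
F_{\beta}(\xi_q) \leq \frac{\beta^2}{2}\bigl[1 - (q^2+(1-q^2)t)^p\bigr] + \frac{1}{2}\frac{t}{1-t} + \frac{1}{2}\log(1-t).
\]
The key observation is to choose $t$ so that $q^2 + (1-q^2)t = q$, i.e.\ $t = \frac{q}{1+q}$. For this choice, $(q^2+(1-q^2)t)^p = q^p$, while $\frac{t}{1-t} = q$ and $1-t = \frac{1}{1+q}$. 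Substituting these identities into the upper bound yields exactly
\[
F_{\beta}(\xi_q) \leq \frac{\beta^2}{2}(1-q^p) + \frac{q}{2} - \frac{1}{2}\log(1+q),
\]
which is the required bound.

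There is no serious obstacle: the proof is a one-line application of Lemma~\ref{lemma:RS-Formula} (which in turn is Jensen's inequality, as the "RS" label suggests) combined with the exact formula for the Franz--Parisi potential in terms of the constrained free energy. The only content is the fortuitous algebraic matching that occurs upon choosing $t = q/(1+q)$, which effectively corresponds to picking the replica-symmetric Parisi measure $\zeta = \delta_t$ on $[0,1)$ in the variational principle \eqref{eq:GeneralVariational}. One should briefly verify that $t = q/(1+q) \in [0,1)$ for $q\in[0,1]$ (trivially true, with the edge case $q=1$ handled by noting both sides diverge to $-\infty$ and the inequality is vacuous, or by continuity).
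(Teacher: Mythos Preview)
Your proof is correct and follows essentially the same approach as the paper: both combine Proposition~\ref{prop:Franz-Parisi-formula} with Lemma~\ref{lemma:RS-Formula} and substitute $t=q/(1+q)$, verifying the identical algebraic simplifications. The paper even includes the same heuristic remark about the origin of the choice $t=q/(1+q)$ (two replicas at overlap $q$ with a common spike direction), which matches your observation that this corresponds to the replica-symmetric Parisi measure $\zeta=\delta_t$.
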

\begin{proof}
Using Proposition \ref{prop:Franz-Parisi-formula} and Lemma \ref{lemma:RS-Formula},
we get, for any $t\in [0,1)$
\begin{align*}
 \cF_{\beta}(q) & =
    F_{\beta}(\xi_q)
    +\beta^2 q^p+\frac{1}{2}\log(1-q^2)\\
    &\le \Par_{\beta,\sRS}(t;\xi_q)+\beta^2 q^p+\frac{1}{2}\log(1-q^2)\, .
\end{align*}
The claim \eqref{eq:RS-Franz-Parisi} follows by substituting $t=q/(1+q)$, and noting
that
\begin{align*}
\cF_{\beta}^{\sRS}(q)=\Par_{\beta,\sRS}(q/(1+q));\xi_q)+\beta^2 q^p+\frac{1}{2}\log(1-q^2)\, .
\end{align*}
The substitution comes from the heuristic that if $\bsig_1,\bsig_2$ are two replicas such that $\langle \bsig_1,\bsig_2\rangle/N \simeq q$, and  $\bsig_1 = q\bsig + \sqrt{1-q^2}\bs_1$, $\bsig_2 = q\bsig + \sqrt{1-q^2}\bs_2$ where $\bs_1,\bs_2$ are orthogonal to $\bsig$ (the planted vector), then $\langle \bs_1,\bs_2\rangle/N \simeq q/(1+q)$.
\end{proof}

The main importance of Proposition~\ref{prop:Jensen-bound} is actually the following consequence. It will ensure that claim~\ref{it:small-prob} holds in Theorems~\ref{thm:main} and \ref{thm:improved-clustering}, and is also used to deduce Theorem~\ref{thm:disorder-chaos} in the latter setting.

\begin{corollary}
\label{cor:FP-maximized}
    For $\beta<\beta_c(p)$ we have
    \begin{align*}
     \cF_{\beta}(q)\leq F_{\beta}  \;\;\;  \forall q\in (-1,1)\,.
     \end{align*}
     with equality if and only if $q=0$. In particular, $\cF_{\beta}(q)$ is uniquely maximized at $q=0$.
\end{corollary}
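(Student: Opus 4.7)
The plan is to reduce everything to the replica-symmetric upper bound from Proposition~\ref{prop:Jensen-bound}, combined with the variational formula \eqref{eq:PspinExplicit} for $\beta_c(p)$. The key observation is that these two expressions are essentially identical after rearrangement. Since $F_{\beta}=\beta^2/2$ throughout the replica-symmetric phase, the inequality $\cF_{\beta}^{\sRS}(q)\le F_{\beta}$ on $(0,1)$ reduces to
\[
\beta^2\;\le\;\frac{-q-\log(1-q)}{q^p}\;=\;\frac{1}{q^p}\log\frac{1}{1-q}-\frac{1}{q^{p-1}},
\]
and the right-hand side is exactly the quantity whose infimum over $q\in[0,1]$ defines $\beta_c^2(p)$. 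Therefore $\beta<\beta_c(p)$ yields the strict version of this inequality pointwise on $(0,1)$, and composing with Proposition~\ref{prop:Jensen-bound} gives $\cF_{\beta}(q)<F_{\beta}$ for every $q\in(0,1)$.

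For $q=0$ the RS bound is no longer strict, so I would invoke Proposition~\ref{prop:Franz-Parisi-formula} directly. At $q=0$ the effective mixture $\xi_0(x)=x^p$ coincides with the covariance of the original pure $p$-spin Hamiltonian, hence $F_{\beta}(\xi_0)=F_{\beta}$; the correction terms $\beta^2 q^p$ and $\tfrac{1}{2}\log(1-q^2)$ both vanish, yielding the required equality $\cF_{\beta}(0)=F_{\beta}$.

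The remaining case is $q\in(-1,0)$, for which Proposition~\ref{prop:Jensen-bound} is not directly applicable (its proof substitutes $t=q/(1+q)\in[0,1)$, which requires $q\ge 0$). I would reduce to the nonnegative case by splitting on the parity of $p$. For even $p$, the symmetry $H_N(-\bsig)=H_N(\bsig)$ gives $\cF_{\beta}(q)=\cF_{\beta}(-q)$ for free. For odd $p$, observe from \eqref{eq:xi-q-def} that $\xi_q$ depends on $q$ only through $q^2$, so $F_{\beta}(\xi_q)=F_{\beta}(\xi_{|q|})$; comparing the two expressions provided by Proposition~\ref{prop:Franz-Parisi-formula} then gives $\cF_{\beta}(q)-\cF_{\beta}(|q|)=\beta^2(q^p-|q|^p)<0$, reducing to the case already handled.

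I do not foresee a serious obstacle: the whole argument amounts to matching the RS upper bound against the defining infimum of $\beta_c(p)$, plus a straightforward symmetry/parity split for negative overlaps. The only mild subtlety is that one must remember Proposition~\ref{prop:Jensen-bound} is stated only for $q\ge 0$, so the negative-$q$ regime has to be addressed separately as above.
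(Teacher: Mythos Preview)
Your proposal is correct and follows essentially the same approach as the paper: both use Proposition~\ref{prop:Jensen-bound} together with the defining infimum \eqref{eq:PspinExplicit} for $\beta_c(p)$ to get the strict inequality on $(0,1)$, and then reduce negative $q$ to $|q|$ via Proposition~\ref{prop:Franz-Parisi-formula} and the fact that $\xi_q$ depends only on $q^2$. Your treatment is slightly more explicit (the parity split for $q<0$ and the separate verification of equality at $q=0$), but the substance is the same.
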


\begin{proof}
    \revv{We first apply Proposition~\ref{prop:Jensen-bound}, which gives that for} $\beta<\beta_c(p)$ and $q\in (0,1)$,
\[
\frac{2}{q^p}\big[F_{\beta}-\cF_{\beta}^{\sRS}(q) \big]
\geq 
\frac{2}{q^p}\big[F_{\beta}-\cF_{\beta}(q) \big]
=
-\beta^2+\frac{1}{q^p}\log\left(\frac{1}{1-q}\right)
-\frac{1}{q^{p-1}}
\stackrel{\eqref{eq:PspinExplicit}}{>}
0\, .
\]
It follows by \eqref{eq:xi-q-def} and Proposition~\ref{prop:Franz-Parisi-formula} that $\cF_{\beta}(q)\leq \cF_{\beta}(|q|)$, which completes the proof.
\end{proof}

We now turn to the main proof of this subsection.

\begin{proof}[Proof of Proposition~\ref{prop:Franz-Parisi-deriv-positive}]
Using the result of Proposition~\ref{prop:Franz-Parisi-formula}, the derivative of $\cF_{\beta}(q)$ is
\begin{equation}
\label{eq:deriv-Franz-Parisi-initial}
    \frac{\de}{\de q} \cF_{\beta}(q)
    =
    \frac{\de}{\de q}F_{\beta}(\xi_q)
    +
    \beta^2 p q^{p-1}
    -
    \frac{q}{1-q^2}\, .
\end{equation}
Note that
\[
    \frac{\de}{\de q}\xi_q(x)
    =
    2pq\Big((1-x)\big(q^2+(1-q^2)x\big)^{p-1}-q^{2p-2}\Big)
    \, .
\]

Recall the Crisanti-Sommers formula \eqref{eq:CrisantiSommers} which we rewrite,
integrating by parts,
\begin{align}
\Par_{\beta}(\zeta;\xi) 
&=
\frac{1}{2}\left(\beta^2\bbE_{x\sim \zeta}
    \lt[\xi(1)-\xi(x)\rt]
\int_0^{\hat{q}} \frac{\de t}{\phi_{\zeta}(t)}  +\log(1-\hat{q})\right)\, .
\end{align}
Recall that $(\zeta,q)\mapsto\Par_{\beta}(\zeta;\xi_q)$ is strictly convex
and lower semicontinuous in $\zeta$. Further, by dominated convergence it is easily
seen to be differentiable in $q$ for any fixed $\zeta$.
Finally, it is easy to show that for (say) $q\geq 1/2$ and fixed $p$, the minimization of the Crisanti--Sommers functional \eqref{eq:CrisantiSommers} can be restricted to $\zeta$ with $\hat q\leq \tilde q(p)<1$ bounded away from $1$ depending only on $p$.
This represents $F_{\beta}(\xi_q)$ as the minimum of $\Par_{\beta}(\zeta;\xi_q)$ over a weak*-compact set of $\zeta$, namely cumulative distribution functions of probability measures on $[0,\hat q(p)]$.
Therefore with $\zeta_q=\zeta_{\xi_q}$ the minimizer to \eqref{eq:GeneralVariational} for $\xi_q$, 
the envelope theorem \cite[Theorem 2 and Corollary 4]{milgrom2002envelope} implies
\begin{equation}
\begin{aligned}
\label{eq:F-beta-deriv-LB}
    \frac{\de}{\de q}F_{\beta}(\xi_q)
    &= \left.\frac{\partial\phantom{q}}{\partial q}\Par_{\beta}(\zeta;\xi_q) \right|_{\zeta=\zeta_{q}}\\
    & = 
    \frac{\beta^2}{2}\cdot  \bbE_{x\sim \zeta_{\xi_q}}
    \lt[\frac{\de}{\de q}\big(\xi_q(1)-\xi_q(x)\big)\rt]
    \\
    &= 
    -\beta^2 pq\cdot  \bbE_{x\sim \zeta_{\xi_q}}
    \lt[
    (1-x)\big(q^2+(1-q^2)x\big)^{p-1}
    \rt]
    .
\end{aligned}
\end{equation}
It remains to estimate the expectation in \eqref{eq:F-beta-deriv-LB}.
Taking a derivative with respect to $\beta$ instead of $q$, \cite[Theorem 1.2]{talagrand2006parisimeasures} and convexity of $x\mapsto \xi_q(x)$ now imply
\begin{equation}
\label{eq:temp-derivative-free-energy}
\begin{aligned}
    \frac{\de}{\de\beta}F_{\beta}(\xi_q)
    &=
    \beta \cdot
    \bbE_{x\sim \zeta_{\xi_q}}\big[\xi_q(1)-\xi_q(x)\big]
    \\
    &\geq 
    \beta \cdot
    \bbE_{x\sim \zeta_{\xi_q}}\lt[(1-x)\xi_q'(x)\rt]
    \\
    &=
    \beta p(1-q^2) \cdot
    \bbE_{x\sim \zeta_{\xi_q}}
    \lt[(1-x)\big(q^2+(1-q^2)x\big)^{p-1}\rt]
    \, .
\end{aligned}
\end{equation}
Next we claim that for $q\in [1-(2p)^{-1},1)$,
\begin{equation}\label{eq:bound_beta_prime}
  \frac{\de}{\de\beta} F_{\beta}(\xi_q)\leq C\, 
\end{equation}
for some absolute constant $C>0$.
Let us first derive our conclusion assuming the above claim. Plugging the last two displays into \eqref{eq:F-beta-deriv-LB}, we obtain
\[
    \frac{\de}{\de q}F_{\beta}(\xi_q)
    \geq 
    -\frac{C\beta q}{1-q^2} \, .
\]
For $ 1-(2p)^{-1}< q \le 1 \le C\beta$, we conclude from \eqref{eq:deriv-Franz-Parisi-initial} that
\begin{equation}
\label{eq:good-bound-Franz-Parisi}
    \frac{\de}{\de q} \cF_{\beta}(q)
    \geq 
    -\frac{(C\beta+1)q}{1-q^2}
    +
    \beta^2 p q^{p-1}
    > 
    -\frac{C\beta}{1-q}
    +
    \frac{\beta^2 p}{2}\, .    
\end{equation}
By inspection, the right-most expression in \eqref{eq:good-bound-Franz-Parisi} is positive whenever
    \[
    1-\frac{1}{2p}\leq q\leq 1-\frac{2C}{\beta p}\, , ~~~~ \max \{C^{-1}, 4C\} \le \beta\, .
    \]

We now prove the claim~\eqref{eq:bound_beta_prime}.
Let $H_{N}^{(\xi_q)} = \sum_{k=1}^p\sqrt{\xi_{q,k}}
H_N^{(k)}(\bsig)$ be the Hamiltonian corresponding to the mixture
$\xi_q$, where the $H_N^{(k)}$'s are independent $k$-spin Hamiltonians,
and $\xi_{q,k}$ is the coefficient of $x^k$ in $\xi_q(x)$.
In the following, we restrict ourselves to the overlap range $q\in [1-(2p)^{-1},1)$. Since  $\binom{p}{j}\leq p^j/j!$, in this case we get 
$\xi_{q,k}\le 1/k!$. 
Proposition~\ref{prop:norm-bound} now yields
\begin{align*}
    \limsup_{N\to\infty}\E
    \sup_{\bsig\in\cS_N}
    \frac{1}{N} H_{N}^{(\xi_q)}(\bsig)
    &\le  \limsup_{N\to\infty}\,
   \sum_{k=1}^p\sqrt{\xi_{q,k}} \, \E \,\sup_{\bsig\in\cS_N}\frac{1}{N} H_N^{(k)}(\bsig)\\
   &\le \frac{C}{10}
   \sum_{j=1}^p
    \sqrt{\xi_{q,j}} 
    \sqrt{\log j}\\
   & \leq \frac{C}{10}   \sum_{j=1}^p\sqrt{\frac{1}{j!}\log j} \le C\, .
\end{align*}
Finally, to obtain~\eqref{eq:bound_beta_prime} we bound the average energy of the model by its maximum: denoting by $\langle \,\cdot \,\rangle_{(\xi_q)}$ the average w.r.t.\ the Gibbs measure on $\cS_N$ with Hamiltonian $H^{(\xi_q)}$, we have
\[  \frac{\de}{\de\beta} F_{\beta}(\xi_q)\le \limsup_{N\to \infty} \frac{1}{N} \E\left\langle  H^{(\xi_q)}(\bsig) \right\rangle_{(\xi_q)} \le \limsup_{N\to \infty}\frac{1}{N} \E \sup_{\bsig \in \cS_N} H_{N}^{(\xi_q)}(\bsig) \, , \]
where the first inequality follows by convexity of the free energy w.r.t.\ $\beta$. 
\end{proof}

\section{Construction of the shattered decomposition}

\revv{
In this section we complete the proof of Theorem~\ref{thm:main} by constructing a shattering decomposition for pure $p$-spin models with large $p$. 
The idea is that, because $\cF_{\beta}$ is increasing on a somewhat large interval near $1$, any pair of clusters which are ``faithfully described'' by $\cF_{\beta}$ are deterministically well-separated. 
In fact in the range of $(p,\beta)$ under consideration, this separation distance is larger than the radius of a typical cluster.
This allows us to define an equivalence relation of nearby typical points; any representative from each class then suffices as the center of a spherical cap cluster.
}

Throughout this proof we assume $\beta\in (\ubeta(p),\beta_c(p))$
where $\ubeta(p)\ge 4\max\{C^{-1},C\}$ is a sufficiently large absolute constant, and $C$ is the constant in 
Proposition~\ref{prop:Franz-Parisi-deriv-positive}.
 We define
\[
    \lbq_p
    =
    1-\frac{1}{2p}\, , 
    \quad\quad\quad\quad
    \ubq_p
    =
    1-\frac{C}{\beta p}\revv{>\lbq_p} \, ,
\]
and the corresponding inner and outer radii
\[
    r(\beta,p)
    =
    \sqrt{2(1-\ubq_p)}\asymp 1/\sqrt{\beta p}\, ,
    \quad\quad\quad\quad
    R(\beta,p)
    =
    \sqrt{2(1-\lbq_p)}\asymp 1/\sqrt{p}\, .
\]
We will often omit the arguments $\beta$, $p$.
Note that $R\geq 1000\, r$ for $\ubeta(p)$ sufficiently large, which we assume throughout the proof.
Further, recall the notation $\|\bx\|_N:=\|\bx\|/\sqrt{N}$ for $\bx\in\reals^N$.

We now define a set of \emph{regular} points which will play an important role in the construction of the shattering decomposition. 
\begin{definition}
\label{def:Sreg}
    Let $S_{\sreg}(H_N,\beta,p,c)\subseteq\cS_N$ consist of all $\bsig\in\cS_N$ such that 
    \begin{align}
    \label{eq:Sreg-1}
    \int e^{\beta H_N(\bsig')}
    1_{\|\bsig-\bsig'\|_N \in [r,R]}
    \de\mu_0(\bsig')
    &\leq 
    e^{-cN}
     \int e^{\beta H_N(\bsig')}
    1_{\|\bsig-\bsig'\|_N\leq r}
    \de\mu_0(\bsig'),
    \\
    \label{eq:Sreg-2}
     \int e^{\beta H_N(\bsig')}
    1_{\|\bsig-\bsig'\|_N\leq 3r}
    \de\mu_0(\bsig')
    &\leq 
    e^{-cN}
    \int e^{\beta H_N(\bsig')}
    \de\mu_0(\bsig')\,.
    \end{align}
\end{definition}

\begin{proposition}
\label{prop:S-reg-covers}
    For $p$ large enough and $\beta\in (\ubeta(p),\beta_c(p))$, $c>0$ small enough, 
    there exists $c'(\beta,p,c)>0$ such that
    \begin{equation}
    \label{eq:S-reg-big}
    \bbE\big[\mu_{\beta}\big(S_{\sreg}(H_N,\beta,p,c)\big)\big]\geq 1-e^{-c'N}.
\end{equation}
\end{proposition}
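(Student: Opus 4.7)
The plan is to pass to the planted model $\nu_{\pl}$, in which $\bsig$ is uniform on $\cS_N$ and $\bG$ carries an explicit rank-one spike, and then to verify both conditions of Definition~\ref{def:Sreg} using the Franz--Parisi formula of Proposition~\ref{prop:Franz-Parisi-formula}. By Fubini, $\bbE[\mu_\beta(S_\sreg)]=\nu_{\rd}(\bsig\in S_\sreg)$, so Lemma~\ref{lem:exp-contiguity-concrete} reduces the claim to showing $\nu_{\pl}(\bsig\notin S_\sreg)\leq e^{-c'N}$. Under $\nu_{\pl}$, rotational invariance lets me fix an arbitrary $\bsig\in\cS_N$, so that~\eqref{eq:Sreg-1} and~\eqref{eq:Sreg-2} become statements purely about the Gaussian noise $\bW$ of the planted construction.

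Next I would cover the overlap range $[-1,1]$ by finitely many bands $[q_i-\eps,q_i+\eps]$ together with a thin layer $q>1-\tau$, choosing $\eps=\eps(\beta,p,\eta)$ small enough that Proposition~\ref{prop:Franz-Parisi-formula} applies to each band and $\cF_\beta$ varies by at most $\eta$ within each, and choosing $\tau=\tau(\beta,p)$ so that the extreme layer $q>1-\tau$ contributes at most $\exp(N(F_\beta-2c))$ to every restricted integral, by crude volume-times-Lipschitz bounds using Proposition~\ref{prop:norm-bound}. A union bound over the $O(1/\eps)$ bands yields, with probability $\geq 1-e^{-cN}$, simultaneous two-sided control $N(\cF_\beta(q_i)\pm \eta)$ of the log of the partition function restricted to each band. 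Condition~\eqref{eq:Sreg-2} then follows from Corollary~\ref{cor:FP-maximized}: the ball $\|\bsig'-\bsig\|_N\leq 3r$ corresponds to overlaps $q\geq 1-\Theta(1/(\beta p))$, a compact subset of $(0,1)$ bounded away from $0$ for $p$ large, so $\sup\cF_\beta\leq F_\beta-\kappa_2$ on this set while the full partition function concentrates at $\exp(NF_\beta)$. For~\eqref{eq:Sreg-1} I would bound the annulus mass by $\exp(N(\cF_\beta(\ubq_p)+2\eta))$ via Proposition~\ref{prop:Franz-Parisi-deriv-positive} (which forces the maximum of $\cF_\beta$ on $[\lbq_p,\ubq_p]$ to be attained at the right endpoint), and lower bound the small-ball mass by $\exp(N(\cF_\beta(q^\star)-\eta))$ for a well-chosen $q^\star\in(\ubq_p,1)$.

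The main obstacle is producing such $q^\star$ with a quantitative strict gap $\cF_\beta(q^\star)-\cF_\beta(\ubq_p)\geq\kappa_1>0$. For this I would reuse the derivative lower bound $\frac{\de}{\de q}\cF_\beta(q)\geq \beta^2 p/2-C\beta/(1-q)$ established inside the proof of Proposition~\ref{prop:Franz-Parisi-deriv-positive}: this bound is strictly positive on an interval that extends past $\ubq_p$ by a margin of order $1/(\beta p)$. Integrating it over this margin produces $q^\star=\ubq_p+\Theta(1/(\beta p))$ with $\cF_\beta(q^\star)-\cF_\beta(\ubq_p)=\Omega(\beta/p)$, and choosing $\eta$ much smaller than this gap (and $c$ smaller still) closes the argument. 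Equivalently, to avoid re-entering the proof of Proposition~\ref{prop:Franz-Parisi-deriv-positive}, one may simply take the absolute constant $C$ in the definition of $\ubq_p$ somewhat larger than the minimum it is permitted, so that strict monotonicity of $\cF_\beta$ persists on a widened interval containing $\ubq_p$ in its interior; the existence of $q^\star$ is then immediate from Proposition~\ref{prop:Franz-Parisi-deriv-positive} applied with this inflated constant.
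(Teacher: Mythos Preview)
Your proposal is correct and follows essentially the same route as the paper: pass to the planted model via Lemma~\ref{lem:exp-contiguity-concrete}, discretize the relevant overlap ranges into an $\eps$-net, invoke Proposition~\ref{prop:Franz-Parisi-formula} on each band together with the Lipschitz bound from Proposition~\ref{prop:norm-bound}, use Corollary~\ref{cor:FP-maximized} for~\eqref{eq:Sreg-2}, and use the strict monotonicity of $\cF_\beta$ for~\eqref{eq:Sreg-1}. The ``obstacle'' you flag---needing the monotonicity interval to extend strictly past $\ubq_p$ so that a $q^\star>\ubq_p$ with $\cF_\beta(q^\star)>\cF_\beta(\ubq_p)$ exists---is handled in the paper in exactly the way you suggest (the proof introduces an auxiliary endpoint $\overline{\overline q}_p>\ubq_p$ on which Proposition~\ref{prop:Franz-Parisi-deriv-positive} still gives positivity, which amounts to inflating the constant $C$).
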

\begin{proof}
\revv{
    Using contiguity at exponential scale, it suffices to show $\bbP(\bsig\in S_{\sreg}(H_N,\beta,p,c))\geq 1-e^{-c'N}$ holds under the planted model.
    The first condition \eqref{eq:Sreg-1} for $S_{\sreg}$ follows readily from the non-monotonicity of $\cF_{\beta}$.
    The main point is that by Proposition~\ref{prop:Franz-Parisi-deriv-positive}, $q \mapsto \cF_{\beta}(q)$ is strictly increasing over the interval $(\lbq_p,\overline{\overline{q}}_p)$ with $\overline{\overline{q}}_p = 1-\frac{C}{\beta p} > \ubq_p$.
    To be precise, choose an $\eps$-net $\cN=\{q_1,\dots,q_k\}\subseteq [\lbq_p,\overline{\overline{q}}_p]$.
    Combining Proposition~\ref{prop:Franz-Parisi-formula} for each $q_j$ with the Lipschitz bound \eqref{eq:C-bounded} (or directly using \eqref{eq:eps-error-bound} for each $q_j$) then establishes the first condition for $S_{\sreg}$ with probability $1-e^{-c'N}$ under the planted model.}

\revv{    For the second condition \eqref{eq:Sreg-2}, we similarly discretize $(\sqrt{1-9r^2},1)$ to an $\eps$-net and apply Corollary~\ref{cor:FP-maximized} (as well as concentration of the relevant band-restricted free energy) to each discretization point.
    Note that since $\cF_{\beta}(q)$ is continuous and tends to $-\infty$ as $q\uparrow 1$, Corollary~\ref{cor:FP-maximized} does imply the uniform bound $\sup_{q\in (\sqrt{1-9r^2},1)}\cF_{\beta}(q)<F_{\beta}$. 
    Given this, we can choose $\eps$ small depending on the margin of the latter inequality.
}
\end{proof}

We next observe that $S_{\sreg}$ deterministically obeys a clustering property.
\begin{lemma}
\label{lem:OGP}
    If $\bsig,\brho\in S_{\sreg}$ then $\|\bsig-\brho\|_N
    \not\in [3r,R/3]$.
\end{lemma}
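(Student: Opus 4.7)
The plan is to argue by contradiction, using only the first regularity condition \eqref{eq:Sreg-1} in the definition of $S_{\sreg}$. Suppose $\bsig,\brho\in S_{\sreg}$ with $d:=\|\bsig-\brho\|_N\in[3r,R/3]$. For any $\bsig'\in\cS_N$ with $\|\bsig'-\brho\|_N\le r$, the triangle inequality yields
\[
    2r \;\le\; d-r \;\le\; \|\bsig'-\bsig\|_N \;\le\; d+r \;\le\; \frac{R}{3}+r \;\le\; R,
\]
where the last inequality uses $R\ge 1000\, r$. Thus the ball $\{\bsig':\|\bsig'-\brho\|_N\le r\}$ is contained in the annulus $\{\bsig':\|\bsig'-\bsig\|_N\in[r,R]\}$ around $\bsig$.

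Writing
\[
    I(\bx) := \int e^{\beta H_N(\bsig')} \, 1_{\|\bx-\bsig'\|_N\le r} \, \de\mu_0(\bsig'),
    \qquad
    A(\bx) := \int e^{\beta H_N(\bsig')} \, 1_{\|\bx-\bsig'\|_N\in[r,R]} \, \de\mu_0(\bsig'),
\]
the containment above gives $I(\brho)\le A(\bsig)$. Applying \eqref{eq:Sreg-1} at $\bsig$ yields $I(\brho)\le e^{-cN}\, I(\bsig)$. By the symmetric argument (swapping the roles of $\bsig$ and $\brho$), we also obtain $I(\bsig)\le e^{-cN}\, I(\brho)$. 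Chaining the two inequalities gives $I(\bsig)\le e^{-2cN}\, I(\bsig)$. Since $I(\bsig)>0$ (the integrand is strictly positive and $\mu_0\{\|\bx-\bsig\|_N\le r\}>0$), this is a contradiction for any $c>0$, which completes the proof.

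The main thing to verify is simply the geometric containment of the ball $B_r(\brho)$ inside the annulus around $\bsig$, which is why the choice of the forbidden interval is $[3r,R/3]$ rather than $[r,R]$: the lower endpoint $3r$ guarantees $\|\bsig'-\bsig\|_N\ge 2r>r$, while the upper endpoint $R/3$ together with the scale separation $R\gg r$ guarantees $\|\bsig'-\bsig\|_N<R$. No use is made of the second condition \eqref{eq:Sreg-2}; that condition will be needed elsewhere (for Assertion \ref{it:small-prob} of the shattering decomposition).
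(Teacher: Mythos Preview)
Your proof is correct and follows essentially the same approach as the paper: both arguments rest on the geometric observation that when $\|\bsig-\brho\|_N\in[3r,R/3]$, the ball $\Ball_r$ around one point is contained in the annulus $[r,R]$ around the other, and then derive a contradiction from \eqref{eq:Sreg-1}. The paper phrases this as the pointwise indicator inequality $1_{\|\bsig-\bx\|_N\in[r,R]}+1_{\|\brho-\bx\|_N\in[r,R]}\geq 1_{\|\bsig-\bx\|_N\leq r}+1_{\|\brho-\bx\|_N\leq r}$ and integrates, while you chain the two containments; the content is the same.
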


\begin{proof}
    Suppose $\|\bsig-\brho\|_N \in [3r,R/3]$.
    Then for any $\bx\in\cS_N$, we have
    \[
    1_{\|\bsig-\bx\|_N\in [r,R]}
    +
    1_{\|\brho-\bx\|_N\in [r,R]}
    \geq 
    1_{\|\bsig-\bx\|_N\leq r}
    +
    1_{\|\brho-\bx\|_N\leq r}
    .
    \]
    Weighting by $e^{\beta H_N(\bx)}$ and integrating contradicts the definition of $S_{\sreg}$.
\end{proof}

We now construct a shattering decomposition as follows. For each $\bsig\in S_{\sreg}$, consider the cluster 
\begin{equation}
    \cC_{\bsig}^{\circ}=S_{\sreg}\cap \Ball_{3r}(\bsig)\, .
\end{equation}

\begin{proposition}
\label{prop:clusters}
    For any $\bsig,\brho\in S_{\sreg}$, the clusters $\cC_{\bsig}^{\circ},\cC_{\brho}^{\circ}$ are either identical or $R\sqrt{N}/4$-separated.
\end{proposition}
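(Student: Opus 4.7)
The plan is to exploit Lemma \ref{lem:OGP}, which says that for any two regular points the normalized distance lies in $[0,3r)\cup(R/3,\infty)$---an ``overlap gap''. Combined with the scale separation $R\ge 1000\,r$ (ensured by taking $\ubeta(p)$ large), this gap makes proximity within $S_{\sreg}$ behave like an equivalence relation: the cluster $\cC_{\bsig}^{\circ}$ should essentially be the equivalence class of $\bsig$ at scale $3r$, and distinct equivalence classes will automatically be separated by roughly the outer radius $R$.

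First I would handle the case that the two clusters share some point $\bx\in S_{\sreg}$. The triangle inequality on $\bsig,\bx,\brho$ gives $\|\bsig-\brho\|_N\le 6r$, and since $6r<R/3$ by the scale separation, Lemma \ref{lem:OGP} forces $\|\bsig-\brho\|_N<3r$. Now for an arbitrary $\by\in\cC_{\bsig}^{\circ}$, the triangle inequality through $\bsig$ yields $\|\by-\brho\|_N\le 6r<R/3$, and a second invocation of Lemma \ref{lem:OGP} (applied to the regular pair $\by,\brho$) sharpens this to $\|\by-\brho\|_N<3r$. Hence $\by\in\cC_{\brho}^{\circ}$, and by symmetry $\cC_{\bsig}^{\circ}=\cC_{\brho}^{\circ}$.

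Next I would treat the complementary case of disjoint clusters and verify the quantitative separation. For any $\by\in\cC_{\bsig}^{\circ}$, $\bz\in\cC_{\brho}^{\circ}$, Lemma \ref{lem:OGP} applied to $(\by,\bz)\in S_{\sreg}^2$ gives the dichotomy $\|\by-\bz\|_N<3r$ or $\|\by-\bz\|_N>R/3$. If the former held, chaining triangle inequalities along $\bsig,\by,\bz,\brho$ would give $\|\bsig-\brho\|_N\le 9r<R/3$, hence $\|\bsig-\brho\|_N<3r$ by Lemma \ref{lem:OGP}, and the argument of the previous paragraph would force $\cC_{\bsig}^{\circ}=\cC_{\brho}^{\circ}$, contradicting disjointness. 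Therefore $\|\by-\bz\|_N>R/3>R/4$, which in the unnormalized norm used in Definition \ref{def:Shattering} reads $\|\by-\bz\|_2\ge R\sqrt{N}/4$, as required.

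I do not anticipate a real obstacle here: the serious content has already been packaged into Lemma \ref{lem:OGP} (the overlap-gap property of $S_{\sreg}$) and into the scale separation $R\gg r$ guaranteed by taking $\ubeta(p)$ sufficiently large. The only point calling for minor care is keeping the two norm conventions straight---comparing distances to the thresholds $3r$, $R/3$ in the normalized norm $\|\cdot\|_N$ of Definition \ref{def:Sreg}, while stating the final separation $R\sqrt{N}/4$ in the Euclidean convention of Definition \ref{def:Shattering}.
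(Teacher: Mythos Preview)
Your proof is correct and uses the same ingredients as the paper---Lemma~\ref{lem:OGP} together with the scale separation $R\ge 1000\,r$. The paper's argument is organized slightly differently: it splits directly on whether $\|\bsig-\brho\|_N\le 3r$ or $\|\bsig-\brho\|_N\ge R/3$ (the only possibilities by Lemma~\ref{lem:OGP}), and in the far case simply observes that the ambient balls $\Ball_{3r}(\bsig),\Ball_{3r}(\brho)$ are already $(R/3-6r)\sqrt{N}\ge R\sqrt{N}/4$ apart, rather than reapplying Lemma~\ref{lem:OGP} to an arbitrary pair $\by,\bz$; but this is a cosmetic difference.
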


\begin{proof}
    First suppose $\|\bsig-\brho\|_N\leq 3r$. Then Lemma~\ref{lem:OGP} implies 
\begin{align*}
    \cC_{\bsig}^{\circ}
    =
    S_{\sreg}\cap \Ball_{3r}(\bsig)
    =
    S_{\sreg}\cap \Ball_{10r}(\bsig)
    \supseteq 
    S_{\sreg}\cap \Ball_{3r}(\brho)
    =
    \cC_{\brho}^{\circ}\, .
\end{align*}
By symmetry the reverse containment holds as well, hence $\cC_{\bsig}^{\circ}=\cC_{\brho}^{\circ}$.
If $\|\bsig-\brho\|_N\geq R/3$, the balls $\Ball_{3r}(\bsig), \Ball_{3r}(\brho)$ are $(R/4)\sqrt{N}$-separated. 
Hence so are $\cC_{\bsig}^{\circ},\cC_{\brho}^{\circ}$.
\end{proof}

\begin{proof}[Proof of Theorem~\ref{thm:main}]
    For each distinct cluster above, we choose a representative $\bsig\in S_{\sreg}$ such that $\cC_{\bsig}^{\circ}$ is that cluster. 
    Then we take the spherical cap $\cC_{\bsig}=\Ball_{3r}(\bsig)$ as one of the clusters in our set.
    We claim this satisfies the desired properties, with $\Delta_1=3r$ and $\Delta_2=R/10$.
 
    Assertion~\ref{it:small-prob} follows by \revv{\eqref{eq:Sreg-2} in Definition~\ref{def:Sreg}}.
    Assertion~\ref{it:small-diam} follows by construction.
    Assertion~\ref{it:clusters-isolated} holds because each cluster is centered at some $\bsig\in S_{\sreg}$.
    Assertion~\ref{it:clusters-separate} follows by Proposition~\ref{prop:clusters}.
    Assertion~\ref{it:clusters-cover} follows by \eqref{eq:S-reg-big}.
\end{proof}

\section{Transport disorder chaos}
\label{sec:disorder}

In this section we prove Theorem~\ref{thm:disorder-chaos} and its converse, 
Proposition~\ref{prop:chaos-converse}.
The former shows that shattering implies chaos for $\eps_N\geq \Omega(1/N)$, while the latter shows chaos never occurs for smaller $\eps_N$. 
In fact we deduce Theorem~\ref{thm:disorder-chaos} from the following more general statement, which also yields transport disorder chaos for Ising spin glasses from the results of \cite{gamarnik2023shattering}.
We note that condition \ref{it:3r-sep} is implied by \ref{it:clusters-separate} in the setting of Theorem~\ref{thm:main}; it is needed for the more general Theorem~\ref{thm:improved-clustering}, where the \emph{worst-case} separation $s$ between clusters can be quite small.

\begin{theorem}
\label{thm:disorder-chaos-general}
Suppose that $\sup_{\bsig\in\cS_N} 
    |H_N(\bsig)|/N\leq C$, let $\mu_0\in\cP(\cS_N)$ be an arbitrary reference probability measure on the sphere. Suppose that $\mu_{\beta}(\de \bsig)\propto e^{\beta H_N(\bsig)}\mu_0(\de\bsig)$ admits a clustering $\{\cC_1,\dots,\cC_M\}$ of disjoint sets $\cC_m\subseteq\cS_N$, with origin symmetry when $p$ is even, obeying conditions \ref{it:small-prob}, \ref{it:small-diam}, \ref{it:clusters-separate}, \ref{it:clusters-cover} (with constants $r,s,c>0$). Moreover assume $r\leq 0.07$ and:
\begin{enumerate}[label={\sf SEP},ref={\sf SEP}]
    \item 
    \label{it:3r-sep}
    Most cluster pairs are $3r$-separated. Namely,
    defining for $\Delta>0$ the condition
    \[
    \sep_{m_1,m_2,\Delta}
    =
    \lt\{
    d\big(\cC_{m_1},\cC_{m_2}\big)\geq \Delta \sqrt{N}
    \rt\},
    \]
    we assume that (for $r$ as in \ref{it:small-diam}):
    \[
    \sum_{1\leq m_1<m_2\leq M}
    \mu_{\beta}(\cC_{m_1})
    \mu_{\beta}(\cC_{m_2})
    (1-1_{\sep_{m_1,m_2,3r}})
    \leq 0.01.
    \]
\end{enumerate}
If \revv{$\eps_N\geq t/N$}, then there exists a positive constant
$c_*=c_*(p,\beta,C,r,s\revv{,t})>0$ depending only on $(p,\beta,C,r,s\revv{,t})$,
such that,  
\[
\bbE[W_{2,N}(\mu_{\beta},\mu_{\beta}^{\eps_N})~|~\bG]\ge c_*\, .
\]
on the event defined by the above conditions.

In particular, if $\mu_0$ is such that the above conditions hold with uniformly positive probability over $\bG$, then transport disorder chaos \eqref{eq:def-disorder-chaos} follows.
\end{theorem}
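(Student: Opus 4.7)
The plan is to show that under $\bG\mapsto\bG^{\eps_N}$, the Gibbs mass of each cluster $\cC_m$ shifts by a roughly independent multiplicative Gaussian factor, forcing mass to be transported across well-separated clusters in any coupling. Write the perturbation as $\bG^{\eps_N}=(1-\eps_N)\bG+\sqrt{2\eps_N-\eps_N^2}\,\bW$ with $\bW$ an independent Gaussian tensor, so the Hamiltonians satisfy $H_N^{\eps_N}-(1-\eps_N)H_N=\sqrt{2\eps_N-\eps_N^2}\,H_N^\bW$ for an independent copy $H_N^\bW$ of the $p$-spin Hamiltonian. Fix representatives $\bsig_m^\star\in\cC_m$. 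Using Proposition~\ref{prop:norm-bound} to control $H_N^\bW$ pointwise and via its Lipschitz constant on each cluster of diameter $r\sqrt{N}$, one establishes
\[
\log\!\big(\mu_\beta^{\eps_N}(\cC_m)/\mu_\beta(\cC_m)\big) \;=\; Y_m - Z + \delta_m,
\]
where $Y_m:=\beta\sqrt{2\eps_N-\eps_N^2}\,H_N^\bW(\bsig_m^\star)$ is centered Gaussian of variance $\sigma^2=2\beta^2 t+o(1)$, $Z$ is a single normalization constant common to all $m$, and $\delta_m=O_\bbP(1)$ is a uniformly bounded $\bW$-controlled correction arising from the local fluctuations of $H_N^\bW$ on $\cC_m$. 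Crucially, $\bbE[Y_m Y_{m'}]=\sigma^2(\la\bsig_m^\star,\bsig_{m'}^\star\ra/N)^p$ is strictly below $\sigma^2$ whenever $\sep_{m,m',3r}$ holds (using $r\le 0.07$), so the $(Y_m)$ are ``sufficiently independent'' across well-separated clusters.

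The next step shows $\|p-q\|_{TV}\ge c_1>0$ with probability at least $\tfrac12$ over $\bW$, where $p_m:=\mu_\beta(\cC_m)$ and $q_m:=\mu_\beta^{\eps_N}(\cC_m)$. Since $q_m\propto p_m e^{Y_m+\delta_m}$ and $\max_m p_m\le e^{-cN}$ by \ref{it:small-prob}, the normalization $A=\sum_m p_m e^{Y_m+\delta_m}$ concentrates around $\bbE A$, and
\[
2\|p-q\|_{TV} \;\approx\; \sum_m p_m\,\big|1-e^{Y_m-\sigma^2/2}\big|
\]
concentrates around the strictly positive constant $c_2:=\bbE_{Y\sim\cN(0,\sigma^2)}|1-e^{Y-\sigma^2/2}|$. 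The variance of the latter sum is bounded via a Hermite-rank-one estimate $|\Cov(f(Y_m),f(Y_{m'}))|\le V_0|\rho_{m,m'}|$, with contributions from $m=m'$, $\sep$-pairs, and $\neg\sep$-pairs controlled by $V_0(e^{-cN}+\rho_\star+0.01)$ respectively. Here $\rho_\star<1$ is a $(p,r)$-dependent bound on the pairwise correlation of the $Y_m$'s over separated pairs, and the $\neg\sep$ contribution uses \ref{it:3r-sep}; Chebyshev then delivers $\|p-q\|_{TV}\ge c_2/4$.

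Finally, the TV bound converts into a transport bound. For any coupling $\pi$ of $(\mu_\beta,\mu_\beta^{\eps_N})$, Assertion~\ref{it:small-diam} yields $\|X-Y\|^2/N\ge 9r^2$ on $\sep_{m,m',3r}$-pairs, so
\[
W_{2,N}^2 \;\ge\; 9r^2\!\!\sum_{(m,m'):\,\sep_{m,m',3r}}\!\!\pi(\cC_m\times\cC_{m'}).
\]
Combining $\sum_m\pi(\cC_m\times\cC_m)\le 1-\|p-q\|_{TV}$ with \ref{it:clusters-cover} gives $\pi(\text{distinct clusters})\ge\|p-q\|_{TV}-2e^{-cN}$; the $\pi$-mass of non-separated cluster pairs is then subtracted off using $\pi(\cC_m\times\cC_{m'})\le\sqrt{p_m q_{m'}}$ and \ref{it:3r-sep}, together with the $\bW$-fluctuation estimate on $(q_m)$ from Step~1. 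This yields $W_{2,N}^2\ge c_*^2$; the origin-symmetric case for even $p$ follows by identifying each cluster with its antipode, which leaves all estimates unchanged. The main technical obstacle is precisely this last bookkeeping: \ref{it:3r-sep} constrains $\mu_\beta^{\otimes 2}$-mass of bad pairs rather than $\mu_\beta\otimes\mu_\beta^{\eps_N}$-mass, and bridging the gap requires a Cauchy--Schwarz estimate against \ref{it:3r-sep} that leverages the approximate independence of the $(Y_m)$ established in Step~1.
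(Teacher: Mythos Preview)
Your decomposition $\log(q_m/p_m)=Y_m-Z+\delta_m$ with $\delta_m=O_{\bbP}(1)$ is the crux of the argument, and it fails in the critical regime $\eps_N\asymp 1/N$. Take $\eps_N=t/N$, so $\eta_N\asymp\sqrt{t/N}$. Then indeed $Y_m=\beta\eta_N H_N^{\bW}(\bsig_m^\star)$ has variance $\beta^2\eta_N^2 N\to 2\beta^2 t=O(1)$, but the correction is
\[
|\delta_m|\;\le\;\beta\eta_N\sup_{\bsig\in\cC_m}\big|H_N^{\bW}(\bsig)-H_N^{\bW}(\bsig_m^\star)\big|
\;\le\;\beta\eta_N\cdot\|\nabla H_N^{\bW}\|_{\op}\cdot r\sqrt{N}.
\]
Proposition~\ref{prop:norm-bound} gives $\|\nabla H_N^{\bW}\|_{\op}\le C_p\sqrt{N}$, so the right-hand side is $\beta\cdot\sqrt{2t/N}\cdot C_p\sqrt{N}\cdot r\sqrt{N}=\Theta(\sqrt{N})$, not $O(1)$. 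The within-cluster fluctuation of $H_N^{\bW}$ swamps the signal $Y_m$, so one cannot write $q_m\propto p_m e^{Y_m+O(1)}$ and run the concentration argument you describe. (The same obstruction arises if one replaces the Lipschitz bound by a Dudley-type estimate: the cluster has full $N$-dimensional entropy.)

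The paper circumvents this by never isolating a ``center'' term. Instead of studying $q_m/p_m$, it proves anti-concentration directly for the pairwise log-ratio $L_{ij}=\log(\mu_\beta^{\eps}(\cC_i)/\mu_\beta^{\eps}(\cC_j))$ conditional on $\bG$: one splits off a single rank-one Gaussian direction $\bW=\overline{\bW}+Z\,\bv^{\otimes p}$ with $\bv$ chosen geometrically from $\cC_i,\cC_j$, and computes
\[
L_{ij}'(Z)=\frac{\beta\eta_N}{N^{(p-1)/2}}\Big(\int\la\bv,\bsig\ra^p\,\de\mu_\beta^\eps(\bsig\mid\cC_i)-\int\la\bv,\bsig\ra^p\,\de\mu_\beta^\eps(\bsig\mid\cC_j)\Big)\ge c_0\,\eta_N\sqrt{N}.
\]
The point is that this derivative bound holds uniformly over $(\overline{\bW},Z)$, so it absorbs the within-cluster fluctuations automatically: the integrals are against the \emph{perturbed} conditional measures, whatever they are. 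One then argues by contradiction that if $\gamma=\bbE[\|\wh\mu_{-0}-\wh\mu'_{-0}\|_{\sTV}\mid\bG]$ were small, most $(i,j)$ would have $L_{ij}$ confined to a $\bG$-measurable interval of length $O(\gamma)$, violating the anti-concentration. Your ``bookkeeping'' worry in the last paragraph is also handled by this device: since one bounds $\gamma$ directly, there is no need to transfer the \ref{it:3r-sep} bound from $\mu_\beta^{\otimes 2}$ to $\mu_\beta\otimes\mu_\beta^{\eps_N}$.
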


\begin{proof}[Proof of Theorem~\ref{thm:disorder-chaos} using Theorem~\ref{thm:disorder-chaos-general}]
    By the law of total expectation, it remains to show the conditions of Theorem~\ref{thm:disorder-chaos-general} are implied (with high probability) by those of Theorem~\ref{thm:main} and Theorem~\ref{thm:improved-clustering}.
    Note that the boundedness of $|H_N(\bsig)|/N$ holds with exponentially good probability by Proposition \ref{prop:norm-bound}.
    In the former case, \ref{it:3r-sep} follows by construction since $\Delta_2\geq \Omega(\sqrt{\beta}\Delta_1)$ holds therein, and $r\leq 0.07$ for $p$ large enough by Remark~\ref{rem:Delta-shrinks-with-p}.

    For the latter case, note that in Theorem~\ref{thm:improved-clustering} the diameter of each cluster is at most $r=7\sqrt{1-\bar q}\leq 0.07$. 
    Since $3r+r+r\leq 0.35$, using the triangle inequality in the first step yields
    \[
    \sum_{1\leq m_1<m_2\leq M}
    \mu_{\beta}(\cC_{m_1})
    \mu_{\beta}(\cC_{m_2})
    (1-1_{\sep_{m_1,m_2,3r}})
    \leq 
    \mu_{\beta}^{\otimes 2}(\|\bsig-\bsig'\|_N\leq 0.35)\leq e^{-c'N}.
    \]
    The latter inequality holds by Corollary~\ref{cor:FP-maximized}, and together with \ref{it:clusters-cover} completes the proof.
\end{proof}

\begin{remark}
    \cite[Theorem 2.9]{gamarnik2023shattering} shows that for $\beta\in (\sqrt{\log 2},\sqrt{2\log 2})$ and $p\geq P(\beta)$, with high probability the Ising spin glass with $\mu_0$ uniform on $\{\pm 1\}^N$ exhibits a shattering decomposition $\cC_1,\dots,\cC_M$ with the following properties.
    \ref{it:small-prob}, \ref{it:small-diam}, \ref{it:clusters-separate}, and \ref{it:clusters-cover} all hold. 
    In fact, there exists a constant denoted $\nu_1>0$ therein, which tends to $0$ as $p\to\infty$ (see the end of \cite[Proof of Proposition 3.7]{gamarnik2023shattering}), such that $r=2\sqrt{\nu_1}$ can be taken.
    (The unusual scaling is because \cite{gamarnik2023shattering} works with Hamming distance.)
    Moreover, one can take $s=2\sqrt{\nu_2}$ for $\nu_2=10\nu_1$, hence $s>3r$ ($\nu_2=3\nu_1$ is written therein, but changing $3$ to $10$ makes no difference \cite{kizildag2023reply}).
    %
    Hence the conditions of Theorem~\ref{thm:disorder-chaos-general} apply, showing that transport disorder chaos \eqref{eq:def-disorder-chaos} holds for pure Ising spin glasses in the aforementioned parameter regime.
\end{remark}

\subsection{Proof of transport disorder chaos}

Assume $\eps_N = \Omega(1/N)$. Let us condition on $\bG$ and fix a decomposition as guaranteed by Theorem~\ref{thm:main}. 
Let $\eta_N=\sqrt{2\eps_N-\eps_N^2}=\Omega(N^{-1/2})$ for convenience.
The conditional law of $\bG^{\eps}$ is 
\begin{align}
    \cL\lt(\bG^{\eps}~|~\bG\rt)
    \stackrel{d}{=}
    (1-\eps_N)\bG
    +
    \eta_N \bW\label{eq:Tensor}
\end{align}
for $\bW$ an i.i.d.\ copy of the disorder. 
The general idea for proving transport disorder chaos is that the fresh disorder $\bW$ randomizes the weights of the clusters $\{\cC_m\}_{m=1}^M$ in such a way that $\mu_{\beta}^{\eps}$ cannot be close to \emph{any} measure depending only on $\bG$.

\begin{proposition}
\label{prop:clustering-robust}
Under the conditions of Theorem~\ref{thm:disorder-chaos-general}, there exists $\eps_0=\eps_0(c)>0$ and $c'>0$ depending only on $(p,\beta,C,r,s)$ such that, 
with probability at least $1-\exp(-c'N)$, the following holds for any 
$\eps\in [0,\eps_0)$:
\begin{align}
\mu_{\beta}^{\eps}\left(\bigcup\limits_{m=1}^M\cC_m\right)\ge 1- e^{-cN/2}\, .
\end{align}
In other words, the new Gibbs measure $\mu_{\beta}^{\eps}$ 
concentrates on exactly on the same clustering with slightly worse parameters.
\end{proposition}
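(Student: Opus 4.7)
The plan is to exploit that the Hamiltonian is linear in the disorder, so $H_N^{\eps}(\bsig) = (1-\eps_N) H_N(\bsig) + \eta_N H_N^{\bW}(\bsig)$ where $H_N^{\bW}$ is the independent $p$-spin Hamiltonian built from $\bW$. Setting $A := \bigcup_{m=1}^M \cC_m$ and
\[
g(\bsig) := \exp\bigl(\beta[-\eps_N H_N(\bsig) + \eta_N H_N^{\bW}(\bsig)]\bigr),
\]
the perturbed Gibbs measure is simply $\de\mu_\beta^{\eps}/\de\mu_\beta = g/\la g\ra_{\mu_\beta}$. Consequently
\[
\mu_\beta^{\eps}(A^c) = \frac{\int_{A^c} g\,\de\mu_\beta}{\int g\,\de\mu_\beta} \le \frac{\sup_\bsig g(\bsig)}{\inf_\bsig g(\bsig)}\cdot\mu_\beta(A^c),
\]
so the problem reduces to controlling the ratio $\sup g/\inf g$ uniformly, on an event of exponentially high probability over $\bW$, and then absorbing that factor into the $e^{-cN}$ bound coming from \ref{it:clusters-cover}.

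For that first step, the hypothesis on $H_N$ already gives $\sup_\bsig|H_N(\bsig)|/N\le C$. The plan is then to apply Proposition~\ref{prop:norm-bound} (with $j=0$) to the independent copy $H_N^{\bW}$, yielding an event $\cE_{\bW}$ of $\bW$-probability at least $1-e^{-c'N}$ on which $\sup_\bsig|H_N^{\bW}(\bsig)|/N\le C$ as well. On $\cE_{\bW}$ one immediately obtains the pointwise two-sided bound $\sup g/\inf g \le \exp\bigl(2\beta(\eps_N+\eta_N) C N\bigr)$.

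Combining with $\mu_\beta(A^c)\le e^{-cN}$ from \ref{it:clusters-cover} and using $\eta_N\le\sqrt{2\eps_N}$, the conclusion will be
\[
\mu_\beta^{\eps}(A^c) \le \exp\Bigl(\bigl[2\beta\bigl(\eps_N+\sqrt{2\eps_N}\bigr) C - c\bigr]N\Bigr),
\]
which is at most $e^{-cN/2}$ once $\eps_0(c)>0$ is chosen so that $2\beta(\eps_0+\sqrt{2\eps_0})C\le c/2$. There is no substantive obstacle in this argument: the coverage assertion \ref{it:clusters-cover} is already exponentially strong at speed $N$, the perturbation distorts the log-density only at the scale $O(\eps N)$, and the finer cluster properties \ref{it:small-diam}, \ref{it:clusters-separate}, \ref{it:3r-sep} are not used at this stage---they enter only later, in the transport argument within the proof of Theorem~\ref{thm:disorder-chaos-general}.
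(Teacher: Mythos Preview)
Your proof is correct and is essentially the same argument as the paper's: bound the Radon--Nikodym derivative $\de\mu_\beta^{\eps}/\de\mu_\beta$ uniformly in $[e^{-cN/10},e^{cN/10}]$ by controlling $\sup|H_N|$ (given by hypothesis) and $\sup|H_N^{\bW}|$ (via Proposition~\ref{prop:norm-bound} applied to the independent copy $\bW$), then absorb this factor into the $e^{-cN}$ bound from~\ref{it:clusters-cover}. The paper compresses this into two sentences, while you have spelled out the ratio $\sup g/\inf g$ and the choice of $\eps_0$ explicitly; the only cosmetic point is that the constant produced by Proposition~\ref{prop:norm-bound} need not equal the $C$ from the hypothesis, so use a separate symbol or take the larger of the two.
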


\begin{proof}
With probability at least $1-\exp(-2c'N)$, we have
$\|\bW\|_{\op}, \|\bG\|_{\op}\le C(p)\sqrt{n}$, see 
e.g. Proposition \ref{prop:norm-bound}.
On this event, we can choose $\eps_0$ small enough so 
that the Radon-Nikodym derivative between $\mu_{\beta}$ and 
$\mu_{\beta}^{\eps}$ is uniformly between $[e^{-cN/10},e^{cN/10}]$. 
This immediately implies the desired statement.
\end{proof}

Next, we say that a real-valued random variable is $\rho$-\emph{anti-concentrated} if it has a density on $\bbR$ uniformly bounded by $\rho$ from above. 
In the remainder of the subsection, we also define $\wh\mu_{-0}$ to be the probability measure on $\{1,\cdots,M\}$ selecting an index $i$ with probability  
\begin{equation}
\label{eq:wh-mu-def}
\wh\mu_{-0}(i)=\frac{\mu_{\beta}(\cC_i)}{\sum_{i=1}^M\mu_{\beta}(\cC_i)}.
\end{equation}

\begin{proposition}
\label{prop:cluster-pair-chaos}
    For any two distinct clusters $\cC_i,\cC_j$, such that $\cC_i\neq -\cC_j$ if $p$ is even we define the log-ratio 
    \begin{equation}
    L_{ij}=\log\lt(\frac{\mu_{\beta}^{\eps}(\cC_i)}{\mu_{\beta}^{\eps}(\cC_j)}\rt)\,,~~~~ 1 \le i \neq j\le M \, .
    \end{equation}
    Under the conditions of Theorem~\ref{thm:disorder-chaos-general}, if $\sep_{i,j,3r}$ holds, then the conditional law of $L_{ij}$ given $\bG$ is $(c_0\eta_N\sqrt{N}\big)^{-1}$-anti-concentrated for some $c_0 = c_0(p,\beta,C,r,s)>0$.
    In particular, the set $\Omega\subseteq \{1,\dots,M\}^2$ for which said anti-concentration does not hold satisfies $\wh\mu_{-0}^{\otimes 2}(\Omega)\leq 0.1$.
\end{proposition}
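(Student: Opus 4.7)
My plan is to condition on $\bG$ and exhibit a direction $\bV$ in $\bW$-space along which $L_{ij}$ is a strictly increasing function of a one-dimensional standard Gaussian, with a uniform quantitative lower bound on the derivative. The classical one-dimensional change-of-variables argument then yields the claimed anti-concentration.

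Writing $H_N^{\eps_N}(\bsig) = (1-\eps_N) H_N(\bsig) + \eta_N \wt H_N^{\bW}(\bsig)$ and differentiating under the integral sign,
\[
\nabla_{\bW} L_{ij} \;=\; \frac{\beta \eta_N}{N^{(p-1)/2}}\,\bD(\bW), \qquad \bD(\bW) := \bbE_{\mu_\beta^{\eps_N}|_{\cC_i}}[\bsig^{\otimes p}] - \bbE_{\mu_\beta^{\eps_N}|_{\cC_j}}[\bsig^{\otimes p}].
\]
I would then fix representatives $\bm_i\in\cC_i$, $\bm_j\in\cC_j$, set $\bV := (\bm_i^{\otimes p} - \bm_j^{\otimes p})/\|\bm_i^{\otimes p} - \bm_j^{\otimes p}\|_F$, and establish the pointwise estimate: for some $\alpha = \alpha(p,r,s) > 0$,
\[
\langle \bV, \bsig^{\otimes p}\rangle \;\geq\; \alpha\, N^{p/2} \quad \forall\,\bsig\in\cC_i, \qquad \langle \bV, \bsig^{\otimes p}\rangle \;\leq\; -\alpha\, N^{p/2} \quad \forall\,\bsig\in\cC_j.
\]
Since $\langle \bV,\bsig^{\otimes p}\rangle \propto \langle \bsig,\bm_i\rangle^p - \langle \bsig,\bm_j\rangle^p$, this follows from the diameter bound \ref{it:small-diam}, which gives $\langle\bsig,\bm_i\rangle/N \geq 1-r^2/2$ for $\bsig\in\cC_i$, and from $\sep_{i,j,3r}$, which yields $\langle\bsig,\bm_j\rangle/N \leq 1-9r^2/2$; the Taylor gap $(1-r^2/2)^p - (1-9r^2/2)^p \asymp pr^2$ dominates for small $r$. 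For even $p$ one must additionally bound $|\langle\bsig,\bm_j\rangle|/N$ from above, equivalently the distance from $\bsig\in\cC_i$ to $-\bm_j$; this uses the assumed origin symmetry combined with either \ref{it:clusters-separate} (when $s\geq 3r$) or an analogous condition $\sep_{i,-j,3r}$, where $-j$ denotes the index of the antipodal cluster $-\cC_j$. Passing through convex combinations preserves these pointwise estimates, so that $\langle \bV, \bD(\bW)\rangle \geq 2\alpha\, N^{p/2}$ holds uniformly in $\bW$.

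Decomposing $\bW = t\bV + \bW^\perp$ with $t = \langle \bV, \bW\rangle \sim N(0,1)$ independent of $\bW^\perp$, the map $t \mapsto L_{ij}(\bW^\perp + t\bV)$ then has derivative at least $c_0\,\eta_N\sqrt{N}$ uniformly in $(t,\bW^\perp)$, for $c_0 := 2\alpha\beta > 0$ depending only on $(p,\beta,C,r,s)$. Being strictly monotone in $t$, a one-dimensional change-of-variables shows that the conditional density of $L_{ij}$ given $(\bG,\bW^\perp)$ is at most $(c_0\eta_N\sqrt{N}\sqrt{2\pi})^{-1}$, and averaging over $\bW^\perp$ preserves this bound. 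For the final claim of the proposition, the "bad" set $\Omega$ consists of diagonal pairs $(i,i)$, antipodal pairs $(i,j)$ with $\cC_j=-\cC_i$ (only for even $p$), pairs $(i,j)$ failing $\sep_{i,j,3r}$, and---if $s<3r$ in the even case---pairs failing $\sep_{i,-j,3r}$. The first two are $\wh\mu_{-0}^{\otimes 2}$-negligible via \ref{it:small-prob}, and the remaining contributions are controlled by one or two applications of \ref{it:3r-sep} combined with origin symmetry, yielding $\wh\mu_{-0}^{\otimes 2}(\Omega) \leq 0.1$.

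The main obstacle is the pointwise lower bound on $\langle \bV, \bsig^{\otimes p}\rangle$. The naive approximation $\bbE_{\mu^{\eps_N}|_{\cC_i}}[\bsig^{\otimes p}] \approx \bm_i^{\otimes p}$ incurs a Frobenius-norm error of order $r\sqrt{p}\,N^{p/2}$, which is of the same order as the intrinsic signal $\|\bm_i^{\otimes p} - \bm_j^{\otimes p}\|_F \asymp r\sqrt{p}\,N^{p/2}$; consequently the lower bound cannot be extracted on average, but must be obtained pointwise by carefully exploiting the sharp Taylor gap together with the origin-symmetry machinery in the even-$p$ case.
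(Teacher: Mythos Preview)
Your approach is correct and follows the same overall strategy as the paper: condition on $\bG$, exhibit a single direction in $\bW$-space along which $L_{ij}$ has derivative uniformly bounded below by $c_0\eta_N\sqrt{N}$, and conclude anti-concentration from the one-dimensional change of variables. The difference lies in the choice of direction. You take the rank-two tensor $\bV\propto \bm_i^{\otimes p}-\bm_j^{\otimes p}$, which reduces the derivative bound to a pointwise gap $\langle\bsig,\bm_i\rangle^p-\langle\bsig,\bm_j\rangle^p$ estimated via separate overlap bounds and the Taylor gap $(1-r^2/2)^p-(1-9r^2/2)^p$. The paper instead uses a \emph{rank-one} direction $\bv^{\otimes p}$: for odd $p$ it takes $\bv=(\bsig_i-\bsig_j)/\|\bsig_i-\bsig_j\|$ and exploits the one-line inequality $a^p-b^p\geq 2^{1-p}(a-b)^p$ (valid for all real $a>b$ when $p$ is odd), needing only the single bound $\langle\bv,\wt\bsig_i-\wt\bsig_j\rangle\geq r\sqrt{N}$ rather than two separate overlap estimates; for even $p$ it takes $\bv\propto \Proj_{\bsig_j}^{\perp}\bsig_i$, which makes $\langle\bv,\bsig_j\rangle=0$ by construction and reduces the comparison to $|\langle\bv,\bx_i\rangle|>|\langle\bv,\bx_j\rangle|$. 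Your approach is more uniform across parities but requires the additional condition $\sep_{i,-j,3r}$ in the even case (which, as you note, is absorbed into $\Omega$ via \ref{it:3r-sep} and origin symmetry of $\wh\mu_{-0}$); the paper's even-$p$ reduction via $L_{ij}=L_{i,-j}$ incurs essentially the same extra condition, so neither route avoids it. Both arguments are valid; the paper's rank-one choice yields slightly cleaner constants and a shorter odd-$p$ case.
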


\begin{proof}
    The last claim follows from the first by \ref{it:3r-sep} (and \ref{it:small-prob} for diagonal terms, and \ref{it:clusters-cover} for the denominator in \eqref{eq:wh-mu-def}).

    Given $i,j$ such that $\sep_{i,j,3r}$ holds, we will choose a unit vector $\bv \in \bbR^N$ for which the disorder aligned with $\bv$ causes the random variable $L_{ij}$ to fluctuate nontrivially, even after arbitrary conditioning on the rest of the disorder.
    To do this, we let $\overline{\bW}$ be the orthogonal projection of $\bW$ onto the codimension $1$ tensor subspace orthogonal to $\bv^{\otimes p}$.
    This gives a decomposition
    \[
    \bW=\overline{\bW}+Z\bv^{\otimes p} \, ,
    \]
    where $Z,\overline\bW$ are independent, $Z$ a standard Gaussian and $\<\obW,\bv^{\otimes p}\>=0$.
    We fix $\overline\bW$ and consider $L_{ij}$ as a function of $Z$ only. Computing the derivative of $L_{ij}$ w.r.t.\ $Z$,
    \begin{equation}
    \label{eq:derivative_L}
    L_{i,j}'(Z) = \frac{\beta \eta_N}{N^{(p-1)/2}} \left(\int \langle \bv, \bsig \rangle^p \de \mu_{\beta}^{\eps}\big(\bsig \,\big|\, \cC_i\big)-\int  \langle \bv, \bsig \rangle^p  \de \mu_{\beta}^{\eps}\big(\bsig \,\big|\, \cC_j\big) \right)  
    .
    \end{equation}
   (Note that $\mu_{\beta}^{\eps}$ implicitly depends on $Z$.)

    \revv{
    We set $\bv=\frac{\bsig_i-\bsig_j}{\|\bsig_i-\bsig_j\|}$ for arbitrary $\bsig_i\in\cC_i,\bsig_j\in\cC_j$.
    First since $\sep_{i,j,\revv{3}r}$ holds, 
    $\la \bv,\bsig_i-\bsig_j\ra=\|\bsig_i-\bsig_j\|\geq 3r\sqrt{N}$.
    Fixing any other $\wt\bsig_i\in\cC_i$ and $\wt\bsig_j\in\cC_j$, we moreover have 
    \begin{align*}
    \max\big(|\la \bv,\wt\bsig_i-\bsig_i\ra|,
    |\la \bv,\wt\bsig_j-\bsig_j\ra|\big)
    \leq r\sqrt{N} \, .
    \end{align*}
    Combining, the triangle inequality yields 
    \[
    \la \bv,\wt\bsig_i-\wt\bsig_j\ra\geq r\sqrt{N},\quad\forall\, \wt\bsig_i\in\cC_i,\,\wt\bsig_j\in\cC_j \, .
    \]
    Since $p$ is odd, one has $a^p-b^p\geq 2^{1-p}(r\sqrt{N})^p$ whenever $a,b\in\bbR$ satisfy $a-b\geq r\sqrt{N}$.
    Combining proves that \eqref{eq:derivative_L} is $\Omega(r^p \eta_N \sqrt{N})$ uniformly in $Z$, finishing the proof for $p$ odd.
    }
    
    For even $p$ we argue as follows. 
    Let $\bsig_i\in\cC_i,\bsig_j\in\cC_j$ be arbitrary, and assume without loss of generality that $\la \bsig_i,\bsig_j\ra\geq 0$. (Here we use origin symmetry of the clustering, and the fact that $L_{ij}$ is unchanged if $\cC_j$ is changed to $-\cC_j$.)
    By taking $\bv =\bu/\|\bu\|$, $\bu:=\Proj^{\perp}_{\bsig_j}\bsig_i$, we have $\<\bv,\bsig_j\>=0$ while
    \begin{align*}
    \frac{\<\bv,\bsig_i\>}{\sqrt{N}}
    &=
    \frac{\<\bsig_i,\Proj^{\perp}_{\bsig_j}\bsig_i\>}
    {\|\Proj^{\perp}_{\bsig_j}\bsig_i\|\sqrt{N}}
    = \frac{\|\Proj^{\perp}_{\bsig_j}\bsig_i\|^2}{\|\Proj^{\perp}_{\bsig_j}\bsig_i\|\sqrt{N}}\\
    & = \frac{\|\Proj^{\perp}_{\bsig_j}\bsig_i\|}{\sqrt{N}}
    = \sqrt{1-\frac{\<\bsig_i,\bsig_j\>^2}{N}}\\
    & \stackrel{(\dagger)}{\ge} \frac{\|\bsig_1-\bsig_2\|}{\sqrt{2N}} \ge\frac{3r}{\sqrt{2}}.
    \end{align*}
    Here step $(\dagger)$ is equivalent to $1-t^2\geq 1-t$ for $t\in [0,1]$.
    By Assumption~\ref{it:small-diam} it follows that for \textbf{any} $\bx_i,\bx_j$ in the convex hull of $\cC_i,\cC_j$ respectively, 
    \[
    \lt|\frac{\la \bv,\bx_i\ra}{\sqrt{N}}\rt|
    \geq \frac{3r}{\sqrt{2}}-r
    >
    r
    \geq
    \lt|\frac{\la \bv,\bx_j\ra}{\sqrt{N}}\rt|
    .
    \]
    Hence for this choice of $\bv$, the right-hand side of \eqref{eq:derivative_L} is again $\Omega(r^p \eta_N \sqrt{N})$, uniformly in $Z$.
    In either case, the lower bound $L_{ij}'(Z)\geq \Omega(r^p \eta_N \sqrt{N})$ yields the desired anti-concentration.
\end{proof}

We are now in position to prove transport disorder chaos from shattering: 
\begin{proof}[Proof of Theorem~\ref{thm:disorder-chaos-general}]
     For convenience we assume for now that $\sep_{i,j,3r}$ holds for \textbf{all} $1\leq i<j\leq M$, and indicate at the end of the proof the minor changes necessary to remove this.
     We will prove a stronger statement: given any sequence of $\bG$-measurable probability measures $\wt\mu=\wt\mu_N$ on $\cS_N$, if $\eps_N=\Omega(1/N)$, then
    \begin{equation}
    \label{eq:stronger-statement-arbitrary}
    \lim_{N\to\infty}
    \bbE
    \lt[
    W_{2,N}\lt(\wt\mu,\mu_{\beta}^{\eps}\rt)
    \rt]
    = 
    \Omega\Big(s \min\big(1,\eta_N\sqrt{N}\big)\Big)
    .
    \end{equation}
    From now on we write $(\eps,\eta)$ for $(\eps_N,\eta_N)$.
    
    First, is not hard to see by general principles that the infimum
    $\inf_{\wt\mu}\bbE\lt[
    W_{2,N}\lt(\wt\mu,\mu_{\beta}^{\eps}\rt)
    \rt]$ is non-decreasing in $\eps$.
   Indeed, given $\eps\geq \eps'\geq 0$, we can couple $(\bG,\bG^{\eps'},\bG^{\eps})$ by taking i.i.d.\ copies $\bW,\bW'$ of the disorder and, with $\wt\eps=1-\frac{1-\eps}{1-\eps'}=\frac{\eps-\eps'}{1-\eps'}$:
    \begin{align*}
        \bG^{\eps'}&=(1-\eps')\bG+\sqrt{2\eps'-\eps'^2}\bW,
        \\
        \bG^{\eps}&=(1-\wt\eps)\bG^{\eps'}+\sqrt{2\wt\eps-\wt\eps^2}\bW'.
    \end{align*}
    Then the infimum over $\bG$-measurable $\wt\mu$ of $\bbE\lt[W_{2,N}\lt(\wt\mu,\mu_{\beta}^{\eps}\rt)\rt]$ is at least the infimum over $\bG^{\eps'}$-measurable $\wt\mu$ by the Markov property. This is just the original problem with $\eps$ replaced by $\wt\eps$ which can be arbitrary within $[0,\eps]$. This proves the monotonicity.
    Thus recalling the right-hand side of \eqref{eq:stronger-statement-arbitrary}, we will assume below (without loss of generality) that $\eta=O(1/\sqrt{N})$.

    Next, we compress $\wt\mu$ into a probability measure $\wh\mu$ on the discrete set $\{0,1,\dots,M\}$.
    With 
    \[
    \cC_0=\cS_N\setminus \bigcup\limits_{i=1}^M \cC_i,
    \]
    the complement of the $M$ clusters,
    we define $\wh\mu$ to satisfy 
    \[
    \wh\mu(\{i\})=\wt\mu(\cC_i)\, ,\quad\forall~
    0\leq i\leq M\, .
    \]
    We similarly associate to $\mu^{\eps}_{\beta}$ a probability measure $\wh\mu'$ supported on $\{0,1,\dots,M\}$ such that $\wh\mu'(\{i\})=\mu_{\beta}^{\eps}(\cC_i)$. With $\|\cdot\|_{\sTV}$ denoting the total variation norm, it follows from Assumption~\ref{it:clusters-separate} that
    \begin{equation}
    \label{eq:W2-to-TV}
    W_{2,N}(\wt\mu,\mu_{\beta}^{\eps})
    \geq 
    s \Big(
    \|\wh\mu-\wh\mu'\|_{\sTV}
    -\wh\mu(0)-\wh\mu'(0)\Big) \, .
    \end{equation}
    It will be convenient to discard the element $\{0\}$ by defining $\wh\mu_{-0},\wh\mu_{-0}'$ two probability measures on $\{1,2,\dots,M\}$ via
    \[
        \wh\mu_{-0}(i)=\frac{\wh\mu(i)}{1-\wh\mu(0)}\, ,
        \quad\quad
        \wh\mu_{-0}'(i)=\frac{\wh\mu'(i)}{1-\wh\mu'(0)}\, .
    \]
    ($\wh\mu_{-0}$ thus agrees with the definition \eqref{eq:wh-mu-def}.)
    It is easy to verify that 
    \begin{equation}\label{eq:restrict_to_clusters}
    \|\wh\mu_{-0}-\wh\mu_{-0}'\|_{\sTV} \le \frac{1}{1-\wh\mu(0)} \|\wh\mu-\wh\mu'\|_{\sTV}\, .
    \end{equation}
    Since $\wh\mu(0) \le e^{-cN}$ by assumption \ref{it:clusters-cover}, it suffices to lower-bound the average TV distance 
    \begin{equation}
    \label{eq:suppose-TV-small}
    \gamma := \E\big[\, \|\wh\mu_{-0}-\wh\mu_{-0}'\|_{\sTV}\, \big| \, \bG \big]\, .
    \end{equation}
    We assume $\gamma\leq 1/40$ below, without loss of generality.
    Let $S_{\same}$ be the subset of $1\leq i\leq M$ satisfying
    \begin{equation*}
    \lt|
    \frac{\wh\mu_{-0}'(i)}{\wh\mu_{-0}(i)}
    -
    1
    \rt|
    \leq 
    10\gamma \, .
    \end{equation*}
    Then by Markov's inequality,  
    \begin{equation}\label{eq:markov_S}
    \E \big[\wh\mu_{-0}(S_{\same}) \, \big| \, \bG\big] \geq 1-\frac{1}{10\gamma}\E \big[\|\wh\mu_{-0}-\wh\mu_{-0}'\|_{\sTV}\, \big| \, \bG\big] = 0.9\, .
    \end{equation}
    Next, denoting
    \[r_i = \frac{\wh\mu_{-0}'(i)}{\wh\mu_{-0}(i)}\,,~~~~~~R_{ij}' = \frac{\wh\mu_{-0}'(i)}{\wh\mu_{-0}'(j)}\,,~~~\mbox{and}~~~R_{ij} = \frac{\wh\mu_{-0}(i)}{\wh\mu_{-0}(j)} \, ,\]
    we have
    \begin{align*}
    \left|\frac{R_{ij}'}{R_{ij}} - 1\right| \le \left|\frac{r_i-1}{r_j}\right| + \left|\frac{r_j-1}{r_j}\right|\, .
    \end{align*}
    Under the event that \revv{$i,j\in S_{\same}$, i.e.}
    \[ |r_i-1| \le 10 \gamma\, , ~~~~\mbox{and}~~~~|r_j-1| \le 10 \gamma \, ,\]
    we have 
    \begin{equation}\label{eq:pairs_S}
    \left|\frac{R_{ij}'}{R_{ij}} - 1\right| \le \frac{20\gamma}{1-10\gamma} \, .
    \end{equation}
    Thus using~\eqref{eq:markov_S},   
    the set $S_{2,\same}\subseteq [M]^2$ of pairs $(i,j)$, $i \neq j$ which satisfy Eq.~\eqref{eq:pairs_S}
    must satisfy
    \begin{equation}
    \label{eq:pair-ratios-mostly-close}
    \E\big[\wh\mu_{-0}^{\otimes 2}(S_{2,\same})  \, \big| \, \bG\big]\geq 1 - 0.2 = 0.8 \, .
    \end{equation}
    Thus we have
    \begin{align}
    \nonumber
    0.8 
    &\le 
    \E\Big[ \wh\mu_{-0}^{\otimes 2}(S_{2,\same}) \, \big| \, \bG \Big] 
    \\
    \nonumber
    &= \sum_{1\le i\neq j\le M} \wh\mu_{-0}(i)\wh\mu_{-0}(j) \P\left(\Big|\frac{R_{ij}'}{R_{ij}} - 1\Big| \le \frac{20\gamma}{1-10\gamma} \, \Big|\, \bG\right)
    \\
    \label{eq:the-last-line}
    &\le \frac{\kappa}{c_0\eta\sqrt{N}} \, ,
    \end{align}
    where 
    \begin{equation}
    \label{eq:kappa_r}
    \kappa = 2 \max\Big(\log\Big(1+ \frac{20\gamma}{1-10\gamma}\Big), -\log\Big(1- \frac{20\gamma}{1-10\gamma}\Big)\Big)
    \revv{=
    -2\log\Big(1- \frac{20\gamma}{1-10\gamma}\Big)}
     ,
    \end{equation}
    and we used the anti-concentration result of Proposition~\ref{prop:cluster-pair-chaos} to obtain the last line \eqref{eq:the-last-line}. Since we assume $\gamma \le 1/40$, we have $\kappa \le 160\gamma$ and we obtain
    \[
        \E\big[\, \|\wh\mu_{-0}-\wh\mu_{-0}'\|_{\sTV}\, \big| \, \bG \big] 
        =
        \gamma
        \ge c_0 \eta \sqrt{N}/200.
    \]
    Plugging this into Eq.~\eqref{eq:restrict_to_clusters} and then Eq.~\eqref{eq:W2-to-TV} finishes the proof, assuming $\sep_{i,j,3r}$ for all $i,j$. 

To remove this extraneous assumption, instead of only considering the set $S_{2,\same}$ in Eq.~\eqref{eq:pair-ratios-mostly-close} we need to further restrict to the good event $\Omega^c$ of Proposition~\ref{prop:cluster-pair-chaos}. Recall that $\Omega^c$ is the set of distinct pairs  $(i,j)$ such that $L_{ij}$ is anti-concentrated. Rebooting the argument from Eq.~\eqref{eq:pair-ratios-mostly-close}, we instead have
    \begin{equation}
    \label{eq:pair-ratios-mostly-close_2}
    \E\big[\wh\mu_{-0}^{\otimes 2}(S_{2,\same} \cap \Omega^c)  \, \big| \, \bG\big]\geq 1 - 0.3 = 0.7 \, .
    \end{equation}
    Proceeding in the same fashion we then have
    \begin{align*}
    0.7 
    &\le 
    \E\Big[ \wh\mu_{-0}^{\otimes 2}(S_{2,\same}\cap \Omega^c) \, \big| \, \bG \Big] \\
    &= 
    \sum_{(i,j) \notin \Omega} \wh\mu_{-0}(i)\wh\mu_{-0}(j) \P\left(\Big|\frac{R_{ij}'}{R_{ij}} - 1\Big| \le \frac{20\gamma}{1-10\gamma} \, \Big|\, \bG\right) \\
    &\le 
    \frac{\kappa}{\eta\sqrt{N}} \, ,
    \end{align*}
where $\kappa$ is given in Eq.~\eqref{eq:kappa_r}. This completes the proof.
\end{proof}

\subsection{Lower bound for the onset of disorder chaos}
Now we prove the converse result, Proposition~\ref{prop:chaos-converse}, saying that if the perturbation is small, i.e, if $\eps_N = o(1/N)$, then the perturbed measure is close to the original one in total variation distance. Although we do not state it this way, the proof below also directly extends to arbitrary reference measures $\mu_0\in\cP(\cS_N)$. 
\revv{For convenience we take $\beta_N=\beta$ to be constant below.}
    
Define the auxiliary probability measure $\bar\mu_{\beta}$ by
    \[
    \de\bar\mu_{\beta}(\bsig)
    \propto
    e^{\beta (1-\eps_N)H_N(\bsig)}\de\mu_0(\bsig)\, .
    \]
    Using the notation of Eq.~\eqref{eq:correlated-disorder}, 
    namely $\bG^{\eps_N}=
(1-\eps_N)\bG+\eta_N\bW$,
$\eta_N =\sqrt{2\eps_N-\eps_N^2}$, and writing 
 $\wtH_N(\bsig):=N^{-(p-1)/2}\<\bW,\bsig^{\otimes p}\>$,
we also define the un-normalized positive measure 
    \begin{equation}
    \label{eq:unnormalized}
    \de\mu^{\dagger}_{\beta}(\bsig)
    =
    e^{\beta \eta_N \wtH_N(\bsig)}\de\bar\mu_{\beta}(\bsig)\, .
    \end{equation}

    First, we observe that 
    \begin{equation}
    \label{eq:dilation-TV}
    \bbE \|\mu_{\beta}-\bar\mu_{\beta}\|_{\sTV}\leq o(1).
    \end{equation}
    Indeed since $\sup_{\bsig\in\cS_N}|H_N(\bsig)|\leq C(p) N$ holds with exponentially good probability $1-e^{-cN}$
    (see Proposition \ref{prop:norm-bound}), the Radon--Nikodym derivative satisfies 
    \[
    \sup_{\bsig\in\cS_N}
    \lt|\frac{\de \mu_{\beta}}{\de\bar\mu_{\beta}}-1\rt|\leq 
    C'(p)\beta\eps_N N= o(1)\, ,
    \]
    with the same probability.
    
    Next, note that if $Z$ is a standard Gaussian and $\gamma_N = o(1)$ is a deterministic sequence, then
    \[
        \bbE\lt[
        |e^{\gamma_N Z}-1|
        \rt]
        = o(1)\, .
    \]
    Setting $\gamma_N=\eta_N\sqrt{N}$ immediately yields
    \[
    \bbE\lt[
        \big|e^{\beta \eta_N \wtH_N(\bsig)}-1\big|
        ~\Big|~
        \bG
        \rt]
        = o(1)\,.
    \]
    Recalling \eqref{eq:correlated-disorder} and \eqref{eq:unnormalized}, averaging the previous display over $\bsig\sim  \bar\mu_{\beta}$ implies that
    \[
    \bbE\|\mu^{\dagger}_{\beta} -\bar\mu_{\beta} \|_{\sTV} = o(1)\,.
    \]
    Finally the probability measure obtained by normalizing $\mu^{\dagger}_{\beta}$ is simply $\mu_{\beta}^{\eps_N}$, so the bound 
    \[
    \|\mu_{\beta}^{\eps_N} -\bar\mu_{\beta} \|_{\sTV}
    \leq 
    3
    \|\mu^{\dagger}_{\beta} -\bar\mu_{\beta} \|_{\sTV}
    \]
    holds almost surely. 
    Combining these estimates completes the proof.

\section{Proof of Theorem~\ref{thm:improved-clustering}}
\label{sec:Improved}

The key difference with Theorem~\ref{thm:main} is that we do not assume 
$1-\lbq\gg 1-\ubq$, which was the case there. This makes the construction of the
shattering decomposition more delicate and as we explain below, we no longer take the clusters to be spherical caps.

Given $\mu_{\beta}$, for sufficiently small constants $\alpha,\delta>0$ depending on
 $(p,\beta)$ and letting $r:=\sqrt{2(1-\ubq)}$, $R:=\sqrt{2(1-\lbq)}$ define:
\begin{align}
\label{eq:S-def}
  \wh S
  &:=
  \lt\{\bsig\in \cS_N~:~\mu_{\beta}(\Ball_r(\bsig))
  \geq 
  (1-e^{-\alpha N})\mu_{\beta}(\Ball_R(\bsig))\rt\},
  \\
  S
  &:=
  \Ball_{\delta}(\wh S)\, .
\end{align}
Assumption \ref{it:nonmonotone} on $\cF_{\beta}$
(namely $q\mapsto \cF_{\beta}(q)$ is strictly increasing on the interval
   $(\lbq,\ubq)$) implies
\begin{align}
\lim_{N\to\infty}\frac{1}{N}\E\log \frac{\mu_{\beta}(\Ball_R(\bsig)\setminus
\Ball_r(\bsig))}{\mu_{\beta}(\Ball_R(\bsig))} = -2c<0\, .
\end{align}
where expectation is taken with respect to $\bG$ and $\bsig$ (conditional on $\bG$,
$\bsig\sim\mu_{\beta}$).

Using a concentration
argument similar to in the proofs of Proposition \ref{prop:S-reg-covers} \revv{ (where one uses contiguity to argue within the planted model, and discretizes the relevant interval of overlaps via Proposition~\ref{prop:norm-bound})},
 it follows that 
\begin{equation}
\label{eq:S-big}
\bbE \mu_{\beta}(S)
\geq 
\bbE \mu_{\beta}(\wh S)
\geq 
1-e^{-cN}
\end{equation}
for suitably small $\alpha,\delta$ and smaller $c$.

In the proof of Theorem~\ref{thm:main}, we observed that when $r\ll R$, the set 
$S$ (analogous to $S_{\sreg}$ in the previous construction) deterministically satisfies an overlap gap property.
Namely, for any pair of points $\bsig_1,\bsig_2\in S$ we have $\|\bsig_1-\bsig_2\|_2/\sqrt{N}\not\in [3r,R/3]$, c.f.\ Lemma \ref{lem:OGP}. This immediately implied
a decomposition of $S$ into clusters. While this does not work directly when $r/R$ is close to $1$, we show that $S$ still obeys a useful geometric property. 
Namely, shortest distance paths in $S$ require exponentially large length to reach a significant distance from any given starting point. For example, $S$ cannot contain long line segments. In other words, the graph of nearest neighbors in $S$ is tortuous when embedded in the Euclidean sphere $\cS_N$. 

Recall the notation $\|\bx\|_N:=\|\bx\|/\sqrt{N}$ for $\bx\in\reals^N$.
\begin{definition}
  Given $X\subseteq\cS_N$, we define the \emph{$\delta$-adjacency graph $\cG_{X,\delta}$ 
  of $X$} as the undirected  graph with  vertex set $X$ and edge set 
  \[
  E(\cG_{X,\delta}):=\lt\{(\bx_1,\bx_2)\in \binom{X}{2}~:~\|\bx_1-\bx_2\|_N\leq \delta\rt\}\, .
  \]
  (Here,  $\binom{X}{2}$ is the set of unordered pairs in $X$.)
  For $x\in X\subseteq \cS_N$, let $\Ball_{\ell}(\bx;\cG_{X,\delta})\subseteq X$ denote the set 
  of points in $X$ whose graph distance from $\bx$ in 
  $\cG_{X,\delta}$ is at most $\ell$.

  Finally, we define a \emph{$(X,\delta)$-path} to be a path inside $\cG_{X,\delta}$, 
  i.e., a finite sequence $(\bx_1,\bx_2,\dots,\bx_k)\in X^k$ such that $\|\bx_{i+1}-\bx_i\|_N\leq \delta$ 
  for all $1\leq i\leq k-1$.
\end{definition}

\begin{proposition}
\label{prop:S-long-paths}
For suitably small constants $\delta,\alpha>0$, for all $\bs\in S$ we have 
  \[
  \Ball_{e^{\alpha N/2}}(\bs;\cG_{S,\delta})\subseteq \Ball_{3R}(\bs)\, .
  \]
\end{proposition}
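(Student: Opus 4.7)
The plan is to run a telescoping argument on the Gibbs masses $\mu_\beta(C_i)$, where $C_i := \Ball_r(\bs_i')$ will track the ``local cluster'' associated with each point along a path in $\cG_{S,\delta}$, exploiting that each $\bs_i' \in \wh S$ forces most of the mass of $\Ball_R(\bs_i')$ to lie inside $C_i$.

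Concretely, I fix any $\bs = \bs_1 \in S$ and any $(S,\delta)$-path $\bs_1, \ldots, \bs_L$ with $L \leq e^{\alpha N/2}$, and for each $i$ pick some $\bs_i' \in \wh S$ with $\|\bs_i - \bs_i'\|_N \leq \delta$ (possible since $S = \Ball_\delta(\wh S)$). The triangle inequality gives $\|\bs_i' - \bs_{i+1}'\|_N \leq 3\delta$, so taking $\delta$ small enough that $3\delta + r \leq R$ produces $C_{i+1} \subseteq \Ball_R(\bs_i')$. Combined with the defining inequality of $\wh S$, this yields the ``slow-drift'' estimate
\[
\mu_\beta(C_{i+1} \setminus C_i) \leq \mu_\beta(\Ball_R(\bs_i') \setminus C_i) \leq e^{-\alpha N}\mu_\beta(\Ball_R(\bs_i')) \leq 2e^{-\alpha N}\mu_\beta(C_i),
\]
valid for $N$ large, and the symmetric statement holds by swapping $i$ and $i+1$.

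With these inequalities in hand, I set $U_\ell := \bigcup_{i=1}^\ell C_i$ and telescope to obtain
\[
\mu_\beta(U_L) \leq \mu_\beta(C_1) + 2L e^{-\alpha N}\max_{i\leq L}\mu_\beta(C_i).
\]
Since each $\mu_\beta(C_i) \leq \mu_\beta(U_L)$, this is self-referential: rearranging gives $\max_i\mu_\beta(C_i) \leq 2\mu_\beta(C_1)$ for $L \leq e^{\alpha N/2}$ and $N$ large, and hence $\mu_\beta(U_L) \leq (1 + o(1))\mu_\beta(C_1)$. In the other direction, iterating $\mu_\beta(C_{i+1}) \geq \mu_\beta(C_i)(1 - 2e^{-\alpha N})$ yields the matching lower bound $\mu_\beta(C_L) \geq (1 - o(1))\mu_\beta(C_1) \geq \mu_\beta(C_1)/2$.

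To close the argument I proceed by contradiction: if $\|\bs - \bs_L\|_N > 3R$ then $\|\bs_1' - \bs_L'\|_N > 3R - 2\delta > 2r$ for $\delta$ small, so $C_1 \cap C_L = \emptyset$ and $\mu_\beta(U_L) \geq \mu_\beta(C_1) + \mu_\beta(C_L) \geq (3/2)\mu_\beta(C_1)$, which contradicts the upper bound, using that $\mu_\beta(C_1) > 0$ almost surely since the Gibbs measure has full support on $\cS_N$. The main obstacle will be executing the self-referential estimate cleanly; the path-length threshold $e^{\alpha N/2}$ emerges precisely because this is the largest regime in which $Le^{-\alpha N}$ stays negligible, so both $\mu_\beta(C_L)$ and $\mu_\beta(U_L)$ remain within factor $1 + o(1)$ of $\mu_\beta(C_1)$, just enough room to detect a single disjoint cluster but no more.
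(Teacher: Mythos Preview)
Your proof is correct and takes a genuinely different route from the paper's.

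The paper argues as follows: given a $(\wh S,3\delta)$-path $\hbs_1,\dots,\hbs_k$ with $\|\hbs_k-\hbs_1\|_N\geq 5R/2$, a discrete intermediate value theorem shows that every $\bx\in\Ball_r(\hbs_i)$ lies in some annulus $\Ball_R(\hbs_j)\setminus\Ball_r(\hbs_j)$. Summing the resulting pointwise inequality and integrating against $\mu_\beta$ yields
\[
k\sum_i \mu_\beta\big(\Ball_R(\hbs_i)\setminus\Ball_r(\hbs_i)\big)\geq \sum_i \mu_\beta\big(\Ball_r(\hbs_i)\big),
\]
which combined with the $\wh S$ property forces $k\geq e^{\alpha N}-1$.

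Your chaining argument avoids the IVT entirely: you track the incremental growth $\mu_\beta(C_{i+1}\setminus C_i)\leq 2e^{-\alpha N}\mu_\beta(C_i)$ and telescope to get $\mu_\beta(U_L)\leq (1+o(1))\mu_\beta(C_1)$ whenever $Le^{-\alpha N}=o(1)$, then derive a contradiction from disjointness of the first and last balls. Both approaches yield the same quantitative threshold (paths of length $o(e^{\alpha N})$ stay within $\Ball_{3R}$), so nothing is lost. Your argument is slightly more elementary and perhaps more portable to settings where the IVT step is awkward; the paper's argument is a bit slicker in that it produces a single global inequality rather than an iterated one. Either is a perfectly good proof of the proposition.
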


\begin{proof}
  Suppose $\bs_1,\bs_2,\dots,\bs_k$ is an $(S,\delta)$-path such that $\|\bs_k-\bs_1\|_N\geq 3R$. 
  We will show that $k>e^{\alpha N/2}$.
  First, for each $i\le k$ let $\hbs_i\in \hS$ be such that $\|\hbs_i-\bs_i\|_N\leq \delta$. 
  Then $\hbs_1,\dots,\hbs_k$ is a $(\hS,3\delta)$-path such that $\|\hbs_k-\hbs_1\|_N\geq 5R/2$.

For any $\rho\ge 0$ and each $i\le k$, define $\psi_{i}(\bx;\rho):= \bfone(\|\bx-\hbs_i\|_N\le \rho)$ and
\begin{align}
\psi_*(\bx;\rho):= \sum_{i=1}^k \psi_{i}(\bx;\rho)\, .
\end{align}

Take $\delta\leq\frac{R-r}{10}$ small enough. It then follows using the discrete intermediate 
value theorem that for any $i\le k$ and any $\bx\in \Ball_r(\hbs_i)$ there exists $1\leq j\leq k$ such 
  that $r<\|\bx-\hbs_j\|<R$. This implies
  \begin{align}
  \psi_{i}(\, \cdot\, ;r)\le \sum_{j=1}^k\big[ \psi_{j}(\, \cdot\, ;R) -
   \psi_{j}(\, \cdot\, ;r)\big]\, .
   \end{align}
   Summing over $i$, we get
  \begin{align}  
  k\psi_*(\,\cdot\, ;R)-(k+1)\psi_*(\,\cdot\, ;r)\ge 0\, .
  \end{align}
  Integrating over all of $\cS_N$ with the Gibbs measure, we obtain
  \begin{align}  
  k\sum_{i=1}^{k}\mu_{\beta}\big(\Ball_R(\hbs_i)\setminus\Ball_r(\hbs_i)\big)
  \ge \sum_{i=1}^{k}\mu_{\beta}\big(\Ball_r(\hbs_i)\big)\, .
  \end{align}
  Recalling \eqref{eq:S-def}, 
  this implies $k>e^{\alpha N/2}$ as desired.
\end{proof}

Using Proposition~\ref{prop:S-long-paths}, we now show $\mu_{\beta}$ has a 
bottleneck around each $\bs\in S$.
\begin{proposition}
\label{prop:S-cuts}
Assume $\max_{\bsig\in\cS_N} |H_N(\bsig)|\le C_0 N$ for some constant $C_0$. 
  Then there exist constants
  $\alpha,\delta>0$ such that, for any $\bs\in S$ there exists $i\leq e^{\alpha N/2}$ such that 
  \[
  \mu_{\beta}(\Ball_{i}(\bs;\cG_{S,\delta}))\geq (1-e^{-\alpha N/3})\mu_{\beta}(\Ball_{i+1}(\bs;\cG_{S,\delta})).
  \]
\end{proposition}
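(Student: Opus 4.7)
The plan is a telescoping/pigeonhole argument on the Gibbs masses $M_i := \mu_\beta(\Ball_i(\bs;\cG_{S,\delta}))$, which form a nondecreasing sequence in $[0,1]$. Set $K := \lfloor e^{\alpha N/2} \rfloor$. If $M_1 = 0$, take $i=0$: since $\mu_\beta(\{\bs\}) = 0$ by absolute continuity of $\mu_\beta$ with respect to $\mu_0$, the inequality $M_0 \geq (1-e^{-\alpha N/3})\, M_1$ holds trivially. From now on I assume $M_1 > 0$.

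The key auxiliary step is a base lower bound $M_1 \geq e^{-\kappa N}$ for some constant $\kappa = \kappa(\beta, C_0, \delta)$. Since $\bs \in S = \Ball_\delta(\wh S)$, pick $\hbs \in \wh S$ with $\|\hbs - \bs\|_N \leq \delta$, and let $\bs_\star := \sqrt{N}(\bs+\hbs)/\|\bs+\hbs\|$ be the midpoint projected onto the sphere. A direct calculation using $\<\bs,\hbs\>/N \geq 1 - \delta^2/2$ gives $\<\bs_\star,\bs\>/N = \<\bs_\star,\hbs\>/N = \sqrt{(1+\<\bs,\hbs\>/N)/2} \geq \sqrt{1 - \delta^2/4}$, which for $\delta$ sufficiently small translates to $\|\bs_\star - \bs\|_N, \|\bs_\star - \hbs\|_N \leq 2\delta/3$. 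Hence the spherical cap $\Ball_{\delta/4}(\bs_\star) \cap \cS_N$ lies inside the lens $L := \Ball_\delta(\bs) \cap \Ball_\delta(\hbs) \cap \cS_N$. Every $\bs' \in L$ lies in $\Ball_\delta(\hbs) \subseteq \Ball_\delta(\wh S) = S$ and within $\delta$ of $\bs$, so $\bs'$ is adjacent to $\bs$ in $\cG_{S,\delta}$, giving $L \subseteq \Ball_1(\bs;\cG_{S,\delta})$. The $\mu_0$-measure of $\Ball_{\delta/4}(\bs_\star) \cap \cS_N$ is at least $e^{-N\log(4/\delta) - O(1)}$, and the hypothesis $|H_N| \leq C_0 N$ gives $\de\mu_\beta/\de\mu_0 \geq e^{-2\beta C_0 N}$; combining these yields $M_1 \geq e^{-\kappa N}$.

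The pigeonhole step then closes the argument. Suppose for contradiction that $M_i < (1 - e^{-\alpha N/3})\, M_{i+1}$ for every $i \in \{1, \ldots, K\}$. Iterating and using $-\log(1-x) \geq x$ for $x \in (0,1)$,
\[
M_{K+1} > M_1 \cdot (1 - e^{-\alpha N/3})^{-K} \geq e^{-\kappa N}\,\exp\big(K\, e^{-\alpha N/3}\big) = \exp\big(e^{\alpha N/6} - \kappa N\big),
\]
which exceeds $1$ for all $N$ sufficiently large, contradicting $M_{K+1} \leq 1$.

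The main obstacle is the base estimate. The delicate case is that $\bs$ might lie on the outer boundary of $S$, at Euclidean distance exactly $\delta$ from $\wh S$, in which case no Euclidean neighborhood of $\bs$ itself is contained in $S$. The midpoint-on-the-sphere trick circumvents this: $\bs_\star$ sits strictly interior to $S$ by a margin of order $\delta$, so a full spherical cap around $\bs_\star$ lies inside $L$ and contributes $\mu_0$-mass $e^{-O(N)}$. Everything else is a routine telescoping and amounts to verifying that the double-exponential slack in $K = e^{\alpha N/2}$ versus the tolerance $e^{-\alpha N/3}$ dominates the single-exponential defect $e^{-\kappa N}$ in the base measure.
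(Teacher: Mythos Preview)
Your proof is correct and follows essentially the same route as the paper: a telescoping/pigeonhole argument on the ratios $M_{i+1}/M_i$, combined with the midpoint trick (using $\hbs\in\wh S$ near $\bs$) and the assumed Hamiltonian bound to get the base estimate $M_1\geq e^{-\kappa N}$. The only cosmetic difference is that the paper uses the Euclidean midpoint $(\bs+\hbs)/2$ and the ball $\Ball_{\delta/2}$ in place of your spherical midpoint $\bs_\star$ and $\Ball_{\delta/4}$; both variants give the needed lower bound on $\mu_\beta(\Ball_1(\bs;\cG_{S,\delta}))$.
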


\begin{proof}
    If the claim were false, then for $j=e^{\alpha N/2}$, telescoping the product gives
  \begin{align}
  \frac{\mu_{\beta}(\Ball_{j}(\bs;\cG_{S,\delta}))}{\mu_{\beta}(\Ball_{1}(\bs;\cG_{S,\delta}))}
  \geq 
  (1+e^{-\alpha N/3})^{j-1}
  \geq 
  \exp\big\{e^{cN}\big\}\, , ~~~ c = c(\alpha)>0\,.\label{eq:Telescope}
  \end{align}
  On the other hand, since $\bs\in S$, there exists $\hbs\in \hS\subseteq S$
  such that $\|\hbs-\bs\|_N\le \delta$. Letting $\bs_1:=(\hbs+\bs)/2$, we see that
   $\Ball_{\delta/2}(\bs_1)\subseteq\Ball_{\delta}(\bs)\cap \Ball_{\delta}(\hbs)$,
   and therefore $\Ball_{\delta/2}(\bs_1)\subseteq \Ball_{1}(\bs;\cG_{S,\delta})$.
   By the assumed bound on the Hamiltonian, we have 
   $\mu_{\beta}(\Ball_{\delta/2}(\bs_1))\geq e^{-CN}$ for a constant $C$. 
  Therefore 
   \begin{align*}
  \frac{\mu_{\beta}(\Ball_{j}(\bs;\cG_{S,\delta}))}{\mu_{\beta}(\Ball_{1}(\bs;\cG_{S,\delta}))}
  \le  \frac{1}{\mu_{\beta}(\Ball_{1}(\bs;\cG_{S,\delta}))}
  \le 
    e^{CN}\, .
  \end{align*}
 This contradicts Eq.~\eqref{eq:Telescope} for $N$ large enough, thus proving our claim.
\end{proof}

The next Proposition lets us transform $\cG_{S,\delta}$-connected clusters into path-connected subsets of $\cS_N$, while maintaining separation between distinct clusters. 
We remark that the argument does not obviously generalize to the Ising state space.
\begin{proposition}
\label{prop:line-segments}
  Suppose $X,Y\subseteq S\subseteq \cS_N$ are connected subsets under the graph structure 
  $\cG_{S,\delta}$  introduced above, 
  and there is no edge between $X$ and $Y$. Let $\overline X$ be the union of
   all shortest-path geodesics on $\cS_N$ between neighbors in $X$
   (i.e., between pairs $\bx_1,\bx_2\in X$ such that
   $\|\bx_1-\bx_2\|_N\le \delta$), and similarly for $\overline Y$. 
   
   Then $\overline X,\overline Y$ are path-connected subsets of $\cS_N$ with $\overline X\subseteq \Ball_{2\delta/3}(X)$, $\overline Y\subseteq \Ball_{2\delta/3}(Y)$. 
   Moreover, $d(\overline X,\overline Y)\geq \delta\sqrt{N}/C$ for some universal constant 
   $C$ (independent of $\delta$).
\end{proposition}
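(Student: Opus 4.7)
The plan is to handle the three claims of the proposition in sequence: path-connectedness, containment in a thin neighborhood, and quantitative separation. Path-connectedness of $\overline X$ will follow immediately from the fact that $X$ is $\cG_{S,\delta}$-connected: any two points of $X$ are joined by a chain of $\delta$-neighbors whose consecutive geodesic arcs concatenate to a continuous path in $\overline X$, and any point of $\overline X$ is connected to $X$ via the arc on which it sits. For the containment $\overline X \subseteq \Ball_{2\delta/3}(X)$, I will parameterize a geodesic arc between $\bx_1, \bx_2 \in X$ with $\|\bx_1-\bx_2\|_N \le \delta$ as $\bz(\phi) = \cos\phi\,\bx_1 + \sin\phi\,\bv$ for $\phi \in [0,\theta]$, where $\bv \in \bx_1^\perp \cap \cS_N$ and $\theta = 2\arcsin(\|\bx_1-\bx_2\|_N/2)$. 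A direct calculation gives $\|\bz(\phi)-\bx_1\|_N = 2\sin(\phi/2)$; the maximum over the arc of $\min(\|\bz(\phi)-\bx_1\|_N, \|\bz(\phi)-\bx_2\|_N)$ equals $2\sin(\theta/4)$, which is bounded by $2\delta/3$ for $\delta$ below a universal constant.

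The substantive step is the separation bound, which I will derive from a quantitative point-to-arc distance estimate: if $\bp,\bq\in\cS_N$ satisfy $\|\bp-\bq\|_N \le \delta$ and $\br\in\cS_N$ satisfies $\|\br-\bp\|_N, \|\br-\bq\|_N \ge \eta$ with $\eta > \delta/2$, then the distance from $\br$ to the geodesic arc $\bp\bq$ is at least $\sqrt{\eta^2 - \delta^2/4}\,\sqrt{N}$ (to leading order in $\delta$). To establish this, I will use coordinates with $\bp = \sqrt{N}\be_1$ and $\bq$ in the $(\be_1,\be_2)$-plane subtending angle $\theta = 2\arcsin(\|\bp-\bq\|_N/2)$ from $\bp$, so the arc becomes $\bz(\phi) = \sqrt{N}(\cos\phi, \sin\phi, 0, \dots, 0)$. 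Writing $\br/\sqrt{N} = (r_1, r_2, \dots, r_N)$ gives $\|\bz(\phi)-\br\|_N^2 = 2 - 2(r_1\cos\phi + r_2\sin\phi)$, and the hypotheses translate to $r_1 \le 1-\eta^2/2$ together with $\cos\theta\, r_1 + \sin\theta\, r_2 \le 1 - \eta^2/2$. Maximizing $\sqrt{r_1^2+r_2^2}$ (which controls the interior minimum over $\phi$) subject to these two constraints, with optimum at $\arctan(r_2/r_1) = \theta/2$, yields $\sqrt{r_1^2+r_2^2} \le (1-\eta^2/2)/\cos(\theta/2)$; plugging in $\cos(\theta/2) = \sqrt{1-\|\bp-\bq\|_N^2/4}$ then produces the claimed bound via elementary inequalities. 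The case where the constrained optimum lies outside $[0,\theta]$ reduces to an endpoint distance, which already exceeds $\eta\sqrt{N}$.

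With this estimate in hand, the separation bound will follow by applying it twice. First, the no-edge hypothesis provides $\eta = \delta$, so any $\by \in Y$ will be at distance at least $(\sqrt{3}/2)\delta\sqrt{N}$ from every arc in $\overline X$, and hence from all of $\overline X$. Next, applying the estimate again with $\br = \bz \in \overline X$, the pair $(\bp,\bq)$ being the endpoints of an arc in $\overline Y$, and $\eta = (\sqrt{3}/2)\delta$, will yield $\|\bz-\bw\|_N \ge \sqrt{3\delta^2/4 - \delta^2/4} = \delta/\sqrt{2}$ for every $\bw$ on that arc. Taking the infimum over $\bz \in \overline X$ and $\bw \in \overline Y$ will give $d(\overline X, \overline Y) \ge \delta\sqrt{N}/\sqrt{2}$, proving the proposition with $C = \sqrt{2}$. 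The main obstacle will be executing the point-to-arc calculation cleanly: the estimate degrades as $\eta$ approaches $\delta/2$, but the two successive iterations preserve a safe margin since $\sqrt{3}/2$ and $1/\sqrt{2}$ are both comfortably above $1/2$.
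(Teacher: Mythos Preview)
Your argument is correct and takes a genuinely different route from the paper. The paper first replaces the spherical geodesic arcs by Euclidean chords (incurring an $O(\delta^2)$ Hausdorff error), then assumes $\hbx\in\wh X$ and $\hby'\in\wh Y$ are within $\delta/C$, translates the $Y$-segment so that it passes through $\hbx$, and applies Stewart's theorem to the triangle $\triangle\bx_0\by_0\by_1$ with cevian $\overline{\bx_0\hbx}$; comparing the two sides of Stewart's identity yields a contradiction for $C=4$. You instead work directly on the sphere, proving a quantitative point-to-arc estimate in explicit coordinates and iterating it twice (point-of-$Y$ to arc-of-$X$, then point-of-$\overline X$ to arc-of-$Y$).

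Your route is more elementary in that it avoids Stewart's theorem and the chord-to-geodesic approximation, and it yields the sharper leading-order constant $C=\sqrt{2}$ rather than $C=4$. The paper's route is more compact once Stewart's theorem is invoked, and its single application handles both arcs simultaneously rather than requiring two passes. One small point worth tightening in your write-up: when you say you maximize $\sqrt{r_1^2+r_2^2}$ subject only to the two linear constraints, that problem is unbounded; what you actually need (and implicitly use) is that when the unconstrained maximizer $\alpha=\arctan(r_2/r_1)$ lies in $[0,\theta]$, the relevant quantity $\max_{\phi\in[0,\theta]}(r_1\cos\phi+r_2\sin\phi)=\sqrt{r_1^2+r_2^2}$ is bounded by $(1-\eta^2/2)/\cos(\theta/2)$ via the parametrization $(r_1,r_2)=\rho(\cos\alpha,\sin\alpha)$ with $\alpha\in[0,\theta]$. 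Making this explicit removes any ambiguity.
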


\begin{proof}
  The path-connectivity and containment of $\overline X,\overline Y$ are clear.  
  Hence, we are left with the task of proving the separation 
  $d(\overline X,\overline Y)\geq \delta\sqrt{N}/C$.
  
   For the main part of the proof, 
  let $\wh X,\wh Y\subseteq \reals^N$ be defined in analogously to  $\overline X,\overline Y$
   but using line segments in
   $\bbR^N$ instead of geodesics in $\cS_N$. Formally,
   \begin{align*}
   \wh X:= \bigcup\limits_{\substack{\bx_1,\bx_2\in X,\\\|\bx_1-\bx_2\|_N\le \delta}}\,  L(\bx_1,\bx_2)\,
   ;
    \;\;\;\;\;\;\;\;\;\;
    L(\bx_1,\bx_2):= \big\{(1-t)\bx_1+t\bx_2:\; t\in [0,1]\big\}\,.
   \end{align*}
  The Hausdorff distances from this approximation satisfy $d_H(\wh X,\overline X), 
  d_H(\wh Y,\overline Y)\leq O(\delta^2)$, so it will suffice to show below that 
  $d(\wh X,\wh Y)\geq \delta\sqrt{N}/C$. 
  
  By contradiction, assume $\hbx\in \wh X$, $\hby'\in\wh Y$ are such that
  $\|\hbx-\hby'\|_N<\delta/C$. 
  By assumption, $\hbx=(1-t_x)\bx_0+t_x \bx_1\in \wh X$ is on 
  the line-segment connecting $\bx_0,\bx_1\in X$ for $t_x\in [0,1/2]$ and similarly 
  $\hby'$ is on the line segment connecting $\by_0'$, $\by_1'\in Y$. 
  Let $\hby = \hby'-(\hby'-\hbx)$, on the line segment connecting 
  $\by_0:=\by_0'-(\hby'-\hbx)$, $\by_1:=\by_1'-(\hby'-\hbx)$.
  Applying Stewart's theorem to the triangle $\triangle \bx_0 \by_0 \by_1$ with 
  cevian $\overline{\bx_0 \hbx}$ we get:
  \begin{equation}
  \label{eq:stewart}
  \|\bx_0-\by_0\|_N^2 \cdot \|\by_1-\hby\|_N
  +
  \|\bx_0-\by_1\|_N^2 \cdot \|\by_0-\hby\|_N
  =
  \|\by_0-\by_1\|_N\cdot \big(
  \|\bx_0-\hbx\|_N^2 + \|\by_0-\hby\|_N\cdot \|\by_1-\hby\|_N
  \big)\, .
  \end{equation}
  By definition, we have
  \begin{align*}
  &\|\bx_0-\by_0\|_N,\|\bx_0-\by_1\|_N
   \geq 
  (1-C^{-1})\delta\, ,
  \\
  &\|\by_1-\hby\|_N+\|\by_0-\hby\|_N
  \le 
  \delta
  \;\;\;
  \implies\;\; 
  \|\by_0-\hby\|_N\cdot \|\by_1-\hby\|_N
  \leq 
  \frac{\delta^2}{4}\, ,
  \\
  &\|\by_0-\by_1\|_N\leq \delta\, ,
  \\
  &\|\bx_0-\hbx\|_N
  =
  t_x \|\bx_0-\bx_1\|_N
  \leq 
  \frac{\delta}{2}\, .
  \end{align*}
  Therefore the left-hand side of \eqref{eq:stewart} is 
  \begin{align*}
  \|\bx_0-\by_0\|_N^2 \cdot \|\by_1-\hby\|_N
  +
  \|\bx_0-\by_1\|_N^2 \cdot \|\by_0-\hby\|_N\ge (1-C^{-1})^2\delta^3\, .
  \end{align*}
   Meanwhile the right-hand side is
   \begin{align*}
    \|\by_0-\by_1\|_N\cdot \big(
  \|\bx_0-\hbx\|_N^2 + \|\by_0-\hby\|_N\cdot \|\by_1-\hby\|_N\big)
\le \delta\Big(\frac{\delta^2}{4}+\frac{\delta^2}{4}\Big) = \frac{\delta^3}{2}\, .
  \end{align*}
  The two displays yield a contradiction if we take, e.g., $C=4$, thus proving our claim.
\end{proof}

\begin{proof}[Proof of Theorem~\ref{thm:improved-clustering}]
  We will first construct an initial clustering which satisfies all 
  properties required by Definition \ref{def:Shattering} except for 
  point \ref{it:clusters-isolated} on bottlenecks. 
  We will then remove violating clusters.

  We begin by combining Propositions~\ref{prop:S-long-paths} and \ref{prop:S-cuts}.
  On the high probability event \\ $\max_{\bsig\in\cS_N}|H_N(\bsig)|\le C_0N$, for $C_0$ large enough,
  the following holds.
   For any $\bs\in S$ there exists $i\leq e^{\alpha N/2}$ such that $\wh\cC_{\bs}:= 
  \Ball_{i}(\bs;\cG_{S,\delta})$ satisfies 
  \begin{align}
  \label{eq:clusters-diam-6R}
  \diam(\wh\cC_{\bs})&\leq 6R\, ,
  \\
  \label{eq:clusters-efficiently-extracted}
  \mu_{\beta}(\wh\cC_{\bs})&\geq (1-e^{-\alpha N/3})\mu_{\beta}(S\cap \Ball_{\delta}(\wh\cC_{\bs}))\, .
  \end{align}
  Moreover, it is easy to see that the same properties hold if $S$ is replaced by any 
  subset $S^{\circ}\subseteq S$ (in particular,
  this is the case for both Propositions~\ref{prop:S-long-paths} and \ref{prop:S-cuts}.)

  We form our clusters iteratively. 
  We start by setting $S_1=S$, and proceed as follows for $\ell\in\{1,2,\dots\}$:
  \begin{enumerate}
  \item Choose $\bs\in S_{\ell}$.
\item   Let the $\ell$-th cluster be $\cC_{\ell}=\overline{\cC}_{\bs}$,
 where  $\overline{\cC}_{\bs}\supseteq\wh\cC_{\bs}$, is  the path-connected superset 
  defined as in Proposition~\ref{prop:line-segments} using $S_{\ell}$ for $S$ therein. (In particular $\overline{\cC}_{\bs}\subseteq \Ball_{2\delta/3}(S_{\ell})$.)
\item   Set 
  $S_{\ell+1}= S_{\ell}\setminus \Ball_{\delta}(\wh{\cC}_{\bs})$. 
  \end{enumerate}
  This is repeated until $S_{\ell+1}$ becomes 
  empty. The number of iterations $M$ is at most the $\delta\sqrt{N}/3$-covering number of 
  $S\subseteq\cS_N$, and in particular is finite. This procedure can obviously be modified as to be  
  origin-symmetric if $p$ is even and $R\leq 0.01$.
  Note that we may have $\overline{\cC}_s \subsetneq S$, but this poses no issues. 

  We now check that all conditions of Definition \ref{def:Shattering}  besides
  point  \ref{it:clusters-isolated} are satisfied. 
  
  Consider first Condition~\ref{it:clusters-cover}.
  With $\cI$ the set of points $\bs\in S$ selected during 
  the procedure to construct clusters defined above (and used as subscripts),
  we have
  \begin{align}
\nonumber
  \bbE \mu_{\beta}
  \lt(
  \bigcup_{\ell=1}^M
  \cC_{\ell}
  \rt)
  &=
  \bbE \mu_{\beta}
  \lt(
  \bigcup_{\bs\in\cI}
  \overline{\cC_{\bs}}
  \rt)\\
  \nonumber
&  \ge
  \bbE \mu_{\beta}
  \lt(
  \bigcup_{\bs\in\cI}
    \wh\cC_{\bs}
  \rt)\\
  \nonumber
  &  \stackrel{(a)}{\geq} 
  (1-e^{-\alpha N/3})
  \mu_{\beta}(S)\\
  \label{eq:most-mass-inside}
  &\stackrel{(b)}{\geq}  1-e^{-cN/2}.
  \end{align}
Here $(a)$ follows from Eq.~\eqref{eq:clusters-efficiently-extracted} and $(b)$
from Eq.~\eqref{eq:S-big}.

Condition~\ref{it:small-diam} is clear from \eqref{eq:clusters-diam-6R}. 
In order to prove Condition \ref{it:small-prob}, note that
  \begin{align*}
  \mu_{\beta}^{\otimes 2}\Big(\Big\{\bsig,\bsig' \,:\,
  \la \bsig,\bsig'\ra/N \geq 0.1
  \Big\}\Big)
  \geq 
  \sum_{\bs\in\cI} \mu_{\beta}(\overline{\cC_\bs})^2\, .
  \end{align*}
  However using Corollary~\ref{cor:FP-maximized}, it follows that for some constant $c_0>0$,
  \begin{align*}
  \mu_{\beta}^{\otimes 2}\Big(\Big\{\bsig,\bsig' \,:\,
  \la \bsig,\bsig'\ra/N \geq 0.1
  \Big\}\Big)\le e^{-c_0N}\, ,
  \end{align*}
    holds with exponentially high probability. The previous two displays imply the claim.
  
  Condition~\ref{it:clusters-separate} is also clear by construction: for any $k<\ell$, the set $S_{\ell}$ is disjoint from $\Ball_{\delta}(\cC_{k})$, while $\cC_{\ell}\subseteq \Ball_{2\delta/3}(S_{\ell})$ by Proposition~\ref{prop:line-segments}.

  We now refine our clustering to ensure Condition~\ref{it:clusters-isolated},
   by deleting all clusters which violate
   it\footnote{Condition~\ref{it:clusters-isolated} might fail to hold even for the first
    cluster: the right-hand side of \eqref{eq:clusters-efficiently-extracted} has 
    $S\cap \Ball_{\delta}(\wh\cC_s)$ rather than just $\Ball_{\delta}(\wh\cC_s)$. Further issues 
    arise for later clusters because the validity of Condition~\ref{it:clusters-isolated} depends on
     the mass deleted during previous iterations.}. More specifically, we delete any 
     $\overline{\cC_{\bs}}$ formed above which satisfies
  \begin{equation}
  \label{eq:non-bottleneck}
    \mu_{\beta}(\overline{\cC_{\bs}})\leq (1-e^{-cN/10})\mu_{\beta}(\Ball_{\delta/10}
    (\overline{\cC_{\bs}})).
  \end{equation}
  Note that the neighborhoods $\Ball_{\delta/10}(\overline{\cC_{\bs}})$ are disjoint 
  for different clusters. Let $\cD\subseteq\cI$ denote the set of deleted clusters, and set
  \[
  \eps
  :=
  \mu_{\beta}
  \lt(
  \bigcup_{s\in\cD}
  \overline{\cC_{\bs}}
  \rt)\, .
  \]
  Then \eqref{eq:non-bottleneck} and disjointness imply
  \[
  \mu_{\beta}
  \lt(
  \bigcup_{s\in\cD}
  \Ball_{\delta/10}(\overline{\cC_s})\backslash\overline{\cC_s}
  \rt)
  \geq 
  e^{-cN/10} \eps\, .
  \]
  However, \eqref{eq:most-mass-inside} implies that with exponentially good probability,
  \[
   \mu_{\beta}
  \lt(
  \bigcup_{s\in\cD}
  \Ball_{\delta/10}(\overline{\cC_s})\backslash\overline{\cC_s}
  \rt)
  \leq e^{-cN/2}.
  \]
  Hence on this event, we find that $\eps\leq e^{-cN/10}$.
  
   By definition of $\eps$, this means Condition~\ref{it:clusters-cover} continues to hold 
   after the deletions. Of course, Condition~\ref{it:clusters-isolated} trivially
    holds after deletions. Finally, Assertions~\ref{it:small-diam}, \ref{it:small-prob}, 
    and \ref{it:clusters-separate} are automatically inherited since the new clusters 
    constitute a subset of the preliminary clusters.
\end{proof}

\section*{Acknowledgments}

AM was supported by the NSF through award DMS-2031883, the Simons Foundation through
Award 814639 for the Collaboration on the Theoretical Foundations of Deep Learning, the NSF
grant CCF-2006489 and the ONR grant N00014-18-1-2729. Part of this work was carried out while
Andrea Montanari was on partial leave from Stanford and a Chief Scientist at Ndata Inc dba
Project N. The present research is unrelated to AM’s activity while on leave.
We thank Brice Huang, Eren K{\i}z{\i}lda{\u{g}}, Kevin Luo, Xiaodong Yang, and the anonymous referees for helpful comments.

\appendix

\section{The static and dynamical temperatures}
\label{app:StaticTemperature}

Here we study $\beta_d(p)$, $\beta_c(p)$, and in particular derive the
asymptotic formulas 
\begin{equation}
\beta_d(p) =\sqrt{e}+O(1/p),\;\;\;\;\; \beta_c(p) = \sqrt{\log p}\cdot
(1+o(1))\, ,\label{eq:AsymBeta}
\end{equation}
(as $p\to\infty$) that are quoted in the main text. 

\paragraph{The static temperature.} To characterize $\beta_c(p)$, we use the Crisanti-Sommers formula of
Eq.~\eqref{eq:CrisantiSommers} (see
\cite{jagannath2018bounds}). 

For the case $\xi'(0)=0$, the high-temperature phase $\beta\leq \beta_c(p)$ is defined by $\zeta=\delta_0$ being
the minimizer of this functional, which we denote below $\Par(\zeta):=\Par_{\beta}(\zeta;\xi)$. Such a minimizer is always unique as $\Par(\zeta)$ is strictly convex.
Furthermore, the derivative of $\Par(\zeta)$ with respect to adding an atom at $\delta_q$ is
\begin{align*}
\lim_{\eps\downarrow 0}
\frac{1}{\eps}\left[\Par((1-\eps)\delta_0+\eps\delta_q)-\Par(\delta_0)\right] &=
    -\frac{\beta^2}{2}\int_0^q (q-t)\xi''(t)~\de t
    +\frac{1}{2}
    \int_0^q 
    \frac{q-t}{(1-t)^2}~\de t\\
    &=-\frac{1}{2}\big[\beta^2\xi(q)
    +q
    +\log(1-q)\big].
\end{align*}
Hence we have (in agreement with \cite[Proposition 2.3]{talagrand2006free}):
\[
    \beta\leq \beta_c(p)
    \iff
    \inf_{q\in (0,1)}
    \Big(
    -\beta^2
    \xi(q)
    -q
    +\log\lt(\frac{1}{1-q}\rt)
    \Big)
    \geq 0.
\]
Equivalently, for the case $\xi(t) = t^p$ of interest
\begin{equation}
    \beta_c^2(p)
    =
    \inf_{q\in [0,1]}
    \lt\{\frac{1}{q^p}\log\lt(\frac{1}{1-q}\rt)-\frac{1}{q^{p-1}}\rt\}.\label{eq:BetacInf}
\end{equation}
To get an upper bound, we choose $q=1-(x/p)$ which yields, as $p\to\infty$:
\begin{align*}
 \beta_c^2(p)
    \le e^x\log p +O(1)
\end{align*}
Taking $x\to 0$ after $p\to \infty$ we thus get 
$ \beta_c(p)\le \sqrt{\log p}\cdot (1+o_p(1))$ for $p$ large.

To get a matching lower bound, we split the infimum in Eq.~\eqref{eq:BetacInf}
in two regions:
\begin{enumerate}
\item For $0<q<1-c(\log p)/p$, we use $-\log(1-q)-q\ge q^2/2$ to get
\begin{align*}
\frac{1}{q^p}\log\lt(\frac{1}{1-q}\rt)-\frac{1}{q^{p-1}}&\ge \frac{1}{2q^{p-2}}\\
&\ge \frac{1}{2}\Big(1-c\frac{\log p}{p}\Big)^{-p+2}\\
& = \frac{1}{2}\cdot 
\exp\Big(c\log p+O\big((\log p)^2/p\big)\Big)\ge p^{c/2}\, ,
\end{align*}
where the last inequality holds for all $p$ large enough.
\item For $1-c(\log p)/p<q<1$, we set $q= 1-x/p$ with $x\leq c\log p$ 
and obtain:
\begin{align*}
\frac{1}{q^p}\log\lt(\frac{1}{1-q}\rt)-\frac{1}{q^{p-1}}&
= \frac{1}{q^p}\lt(\lt(\log \frac{p}{x}\rt)-q\rt)\\
&\ge \log p - \log(c \log p)-1.
\end{align*}
%
\end{enumerate}
Taken together, these bounds imply $\beta_x(p)\ge \sqrt{\log p}\cdot (1-o(1))$
and therefore conclude the proof of the asymptotic formula for
$\beta_c(p)$ in Eq.~\eqref{eq:AsymBeta}.

\revv{ \paragraph{The dynamical temperature.} As
discussed in the introduction, the dynamical/shattering phase transition
temperature was first identified\footnote{This paper
contains a typo, corrected e.g. in \cite{castellani2005spin} or \cite[Appendix A]{folena2020rethinking}.
} in \cite{crisanti1993spherical}
via the asymptotics of the equilibrium correlation
function $C(t) = \lim_{N \to \infty} \langle\bsig_0 , \bsig_t\rangle /N$ (assuming the limit exists) where $\bsig_0 \sim \mu_{\beta}$.}

\revv{For general mixed $p$-spin model with mixture $\xi$, such that $\xi'(0)=0$,
the plateau $q = \lim_{t \to \infty} C(t)$ is expected to satisfy~\cite{cugliandolo2004course} the equation      
\begin{equation}
\beta^2\xi'(q)(1-q) = q \, .
\end{equation}
The inverse temperature for the 
dynamical phase transition  $\beta_d(\xi)$ is the smallest $\beta$ at which a non-zero solution 
appears:
\begin{align}
\beta_d(\xi)= \inf_{q\in (0,1)} \frac{q}{\xi'(q)(1-q)}\,.
\end{align}
In the case of a pure $p$-spin $\xi(t) = t^p$, a simple calculation yields the formula 
\begin{align}
\beta_d(p)=\sqrt{\frac{(p-1)^{p-1}}{p(p-2)^{p-2}}}\,.
\end{align}
}      
The claim of Eq.~\eqref{eq:AsymBeta} follows by calculus.

\section{Overlap disorder chaos in the replica-symmetric phase}
\label{sec:trivial-overlaps}

Here we consider general mixed $p$-spin models without external field, whose covariances are parametrized by $\xi(t)=\sum_{p\geq 2} \gamma_p^2 t^p$ for an exponentially decaying sequence $\gamma_2,\gamma_3,\dots \geq 0$. It will be useful to consider the perturbed mixture functions $\xi^{(p,\delta)}(t)=\xi(t)+\delta t^p$, where $\delta$ is small but may be negative.

\begin{proposition}
\label{prop:RS-stability}
    If $\beta<\beta_c(\xi)$, then for any $p$ with $\gamma_p>0$ the free energy of $\xi^{(p,\delta)}$ is replica symmetric for $|\delta|< \delta_0(\xi,\beta,p)$ small enough.
\end{proposition}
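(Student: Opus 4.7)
The plan is to reduce the proposition to the pointwise first-order optimality condition for the Crisanti--Sommers functional at $\zeta=\delta_{\{0\}}$ (the Dirac mass at $0$), and then exploit a strict slack that follows from the hypothesis $\beta<\beta_c(\xi)$. From the derivation of the static temperature carried out in Appendix~\ref{app:StaticTemperature}, together with strict convexity of $\zeta\mapsto\Par_\beta(\zeta;\eta)$, the free energy of an admissible mixture $\eta$ (with $\eta'(0)=0$) at inverse temperature $\beta$ is replica symmetric, i.e.\ $\delta_{\{0\}}$ is the Parisi minimizer, if and only if
\[
g_\eta^\beta(q)\;:=\;\log\tfrac{1}{1-q}\,-\,q\,-\,\beta^2\eta(q)\;\ge\;0\,,\qquad\forall\,q\in(0,1)\,.
\]

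I would then apply this equivalence first to $\eta=\xi$ at any auxiliary $\beta'\in(\beta,\beta_c(\xi))$, which yields $g_\xi^{\beta'}(q)\ge 0$ and rearranges into the quantitative slack
\[
g_\xi^{\beta}(q)\;=\;g_\xi^{\beta'}(q)\,+\,(\beta'^2-\beta^2)\,\xi(q)\;\ge\;(\beta'^2-\beta^2)\,\xi(q)\;\ge\;(\beta'^2-\beta^2)\,\gamma_p^2\,q^p\,,
\]
where the last step $\xi(q)=\sum_{k\ge 2}\gamma_k^2 q^k\ge\gamma_p^2 q^p$ is the only place the assumption $\gamma_p>0$ is used. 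For the perturbed mixture $\xi^{(p,\delta)}=\xi+\delta\,t^p$, which remains an admissible mixture (with exponentially decaying non-negative coefficients and vanishing derivative at $0$) as long as $\delta>-\gamma_p^2$, I would then compute
\[
g_{\xi^{(p,\delta)}}^{\beta}(q)\;=\;g_\xi^{\beta}(q)\,-\,\beta^2\delta\, q^p\;\ge\;\bigl[(\beta'^2-\beta^2)\gamma_p^2\,-\,\beta^2|\delta|\bigr]\,q^p\,.
\]
Choosing $\delta_0(\xi,\beta,p):=\tfrac{1}{2}\min\bigl\{\gamma_p^2,\,(\beta'^2-\beta^2)\gamma_p^2/\beta^2\bigr\}$ makes the bracket non-negative for every $|\delta|<\delta_0(\xi,\beta,p)$, and applying the equivalence above in the reverse direction (now for the mixture $\xi^{(p,\delta)}$) delivers replica symmetry at inverse temperature $\beta$.

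The only conceptual subtlety is justifying that the pointwise condition $g_\eta^\beta\ge 0$ on $(0,1)$ \emph{characterizes} replica symmetry rather than being merely necessary; this is already implicit in the derivation in Appendix~\ref{app:StaticTemperature}, since strict convexity of $\Par_\beta(\,\cdot\,;\eta)$ makes $\delta_{\{0\}}$ its unique minimizer as soon as every one-sided directional derivative from $\delta_{\{0\}}$ is non-negative, and by taking convex combinations it suffices to test directions toward Dirac masses $\delta_{\{q\}}$, for which the derivative recorded there is precisely $-\tfrac{1}{2}g_\eta^\beta(q)$. I do not anticipate any other obstacle: the entire argument is first-order calculus on $g_\xi^\beta$, with the role of $\gamma_p>0$ being to guarantee that the available slack inherits the same $q^p$ scaling as the direction of perturbation.
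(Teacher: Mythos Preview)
Your argument is correct and in spirit matches the paper's, but the execution differs. The paper's proof is a one-liner invoking a coefficient-wise monotonicity lemma (\cite[Corollary~2.1]{sellke2023free}): if $\wt\xi\le\wh\xi$ coefficient-wise and $\wh\xi$ is replica-symmetric at a given inverse temperature, then so is $\wt\xi$. Applying this with $\wh\xi=\xi$ (at some $\beta'\in(\beta,\beta_c)$, absorbed into the mixture) and $\wt\xi=\xi^{(p,\delta)}$ (at $\beta$) gives the result for $|\delta|$ small. Your route instead unpacks this monotonicity directly at the level of the first-order optimality condition $g_\eta^\beta\ge 0$ from Appendix~\ref{app:StaticTemperature}, which makes the proof self-contained and yields an explicit $\delta_0$. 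Both arguments rest on the same mechanism: the strict slack in the RS condition at $\beta<\beta_c(\xi)$, proportional to $\xi(q)\ge\gamma_p^2 q^p$, absorbs the $\delta q^p$ perturbation. One minor correction: the directional derivative recorded in Appendix~\ref{app:StaticTemperature} equals $+\tfrac{1}{2}g_\eta^\beta(q)$, not $-\tfrac{1}{2}g_\eta^\beta(q)$; this is a sign slip that does not affect your conclusion, since you correctly state the characterizing condition as $g_\eta^\beta\ge 0$.
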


\begin{proof}
    In fact we may take $\delta_0 = \gamma_p^2\cdot \min(1,\beta_c-\beta)$. For instance, \cite[Corollary 2.1]{sellke2023free} shows that if $\wt\xi\leq \wh\xi$ holds coefficient-wise, then $F_{\beta}(\wh\xi)=F_{\beta}^{\Ann}(\wh\xi)$ implies the same for $\wt\xi$.
\end{proof}

\begin{proposition}
\label{prop:RS-trivial-overlaps}
    Let $\xi$ be any mixture function.
    Then for $\beta<\beta_c(\xi)$, letting
    $\bsig,\bsig'\stackrel{i.i.d.}{\sim}\mu_{\beta}$,
    \[
    \lim_{N\to\infty}
    \bbE\big[|\la\bsig,\bsig'\ra/N|^2\big]=0.
    \]
    Moreover the same holds in the Ising case.
\end{proposition}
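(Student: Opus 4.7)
My plan is a perturbation-plus-Gaussian-integration-by-parts argument building on Proposition~\ref{prop:RS-stability}. Fix $\beta < \beta_c(\xi)$ and consider the one-parameter family of mixtures $\xi_\delta := \xi + \delta t^2$, $\delta\geq 0$. By Proposition~\ref{prop:RS-stability} when $\gamma_2>0$, or a direct extension based on continuity of $\beta_c$ in the mixture in the general case (which in turn follows from the variational characterization of $\beta_c$ as an infimum over $q\in(0,1)$ of a jointly continuous function of $(q,\delta)$ attained in the interior), there exists $\delta_0>0$ such that $\beta < \beta_c(\xi_\delta)$ for every $\delta\in[0,\delta_0)$. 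The limiting free energy $g(\delta):=\lim_N \tfrac{1}{N}\E\log Z_N(\xi_\delta)$ therefore equals the annealed value on $[0,\delta_0)$, i.e.\ $g(\delta)=\tfrac{\beta^2}{2}(\xi(1)+\delta)+c_0$ with $c_0=0$ (spherical) or $c_0=\log 2$ (Ising); in particular $g$ is affine on this interval with slope $\beta^2/2$.

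A standard Gaussian integration by parts, applied to the decomposition $H_N^{(\xi_\delta)}=H_N^{(\xi)}+\sqrt{\delta}\,H_N^{(2)}$ with $H_N^{(2)}$ an independent pure $2$-spin Hamiltonian, yields the finite-$N$ identity
\[
\frac{d}{d\delta}\,\frac{1}{N}\E\log Z_N(\xi_\delta) \;=\; \frac{\beta^2}{2}\Bigl(1-\E\bigl\langle R_{12}^2\bigr\rangle_{\xi_\delta,\beta,N}\Bigr),
\]
where $R_{12}=\langle\bsig_1,\bsig_2\rangle/N$ and $\langle\cdot\rangle$ is the Gibbs average over two i.i.d.\ replicas. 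Integrating from $0$ to $\eta\in(0,\delta_0)$, passing to the limit $N\to\infty$, and applying dominated convergence (using $\E\langle R_{12}^2\rangle\in[0,1]$) gives $\int_0^\eta \lim_N \E\langle R_{12}^2\rangle_{\xi_\delta,\beta,N}\,d\delta=0$. Hence $\lim_{N\to\infty}\E\langle R_{12}^2\rangle_{\xi_\delta,\beta,N}=0$ for Lebesgue-a.e.\ $\delta\in(0,\delta_0)$.

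To transfer this conclusion to $\delta=0$ I would establish a uniform-in-$N$ Lipschitz bound $\bigl|\tfrac{d}{d\delta}\E\langle R_{12}^2\rangle_{\xi_\delta,\beta,N}\bigr|\leq C(\beta)$ for $\delta\in[0,\delta_0)$. This follows from a second Gaussian integration by parts: the derivative becomes a finite linear combination of four-replica overlap moments of the form $\E\langle R_{ij}^2 R_{kl}^2\rangle$, each trivially bounded by $1$ since $|R_{ij}|\leq 1$. Given such a bound, for any good $\delta>0$ we have $\limsup_N \E\langle R_{12}^2\rangle_{\xi,\beta,N}\leq C(\beta)\,\delta$, and sending $\delta\to 0^+$ along such values completes the argument. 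The Ising case is handled identically, as the additive $\log 2$ in the annealed free energy plays no role after differentiation in $\delta$.

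The main technical obstacle I anticipate is the careful enumeration of the four-replica terms arising in the second IBP for $\tfrac{d}{d\delta}\E\langle R_{12}^2\rangle$; while each term is manifestly $\leq 1$ in absolute value, the cavity-style bookkeeping must be carried out attentively to confirm that only finitely many four-replica overlap products appear with coefficients uniformly bounded in $N$. This is a routine but somewhat tedious calculation rather than a conceptual difficulty.
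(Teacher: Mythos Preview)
Your argument is correct and takes a route that overlaps with but genuinely differs from the paper's. The paper perturbs at a degree $p$ with $\gamma_p>0$, which permits a \emph{two-sided} perturbation $|\delta|<\delta_0$; writing $F_{N,\beta}$ as a function of the coupling $u=\sqrt{\gamma_p^2+\delta}$ (which is convex in $u$ as a log-partition function), pointwise convergence to an affine-in-$\delta$ (hence smooth-in-$u$) limit on a two-sided neighborhood of $u=\gamma_p>0$ forces convergence of the derivative at $\delta=0$ directly---no second integration by parts is needed. The price is that this yields only $\bbE\big[(\la\bsig,\bsig'\ra/N)^p\big]\to 0$ for that particular $p$, so when $\xi$ is odd the paper appends a separate annealed estimate to exclude overlaps in $[-1,-\eps]$. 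Your choice to perturb always by $\delta t^2$ gives $\bbE\langle R_{12}^2\rangle\to 0$ in one stroke and handles odd $\xi$ uniformly, at the cost of the one-sided perturbation (since $\gamma_2$ may vanish), hence the need for the uniform Lipschitz bound via a second IBP. One minor point: your dominated-convergence step presumes the pointwise limit $\lim_N\bbE\langle R_{12}^2\rangle_{\xi_\delta,N}$ exists, which has not been shown; but this detour is unnecessary, since from $\int_0^\eta\bbE\langle R_{12}^2\rangle_{\xi_\delta,N}\,d\delta\to 0$ and your Lipschitz bound you obtain $\bbE\langle R_{12}^2\rangle_{\xi,N}\le\eta^{-1}\int_0^\eta\bbE\langle R_{12}^2\rangle_{\xi_\delta,N}\,d\delta+C(\beta)\eta$ directly, and then let $\eta\downarrow 0$.
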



\begin{proof}
    By definition, $F_{\beta}(\xi)=\beta^2 \xi(1)/2$ for $\beta\leq\beta_c$.
    Fix $p$ with $\gamma_p>0$.
    Then for $|\delta|$ small enough, as $N\to\infty$, $F_{N,\beta}(\xi^{(p,\delta)})$ tends to 
    \[
    F_{\beta}(\xi^{(p,\delta)})=\frac{\beta^2 (\xi(1)+\delta)}{2}
    \]
    locally uniformly in probability; the right-hand value follows from Proposition~\ref{prop:RS-stability}.
    Therefore 
    \[
    \plim_{N\to\infty}\frac{\de}{\de\delta}F_{N,\beta}(\xi^{(p,\delta)})\big|_{\delta=0}
    =
    \beta.
    \]
    On the other hand \cite[Theorem 1.2]{talagrand2006parisimeasures} (see also \cite[Theorem 3.7]{panchenko2013sherrington}) gives
    \begin{align}
    \nonumber
    \frac{\de}{\de\delta}F_{N,\beta}(\xi^{(p,\delta)})\big|_{\delta=0}
    &=
    \beta\big(1-\bbE\big[\big(\la\bsig,\bsig'\ra/N\big)^p\big]\big),
    \\
    \label{eq:xi-zero-on-average}
    &\implies 
    \bbE\big[\big(\la\bsig,\bsig'\ra/N\big)^p\big]
    =
    o_N(1).
    \end{align}
    In the case that $\gamma_p>0$ for some even $p$, this completes the proof. 

    In the case that $\xi$ is an odd function, note that for any $\eps>0$ there exists $c=c(\xi,\eps)>0$ such that $\xi(t)\leq -c$ on $t\in [-1,-\eps]$. Let 
    \[
    \cS_N(2,[-1,-\eps])
    =
    \lt\{
    (\bsig,\bsig')\in\cS_N^2~:~
    \frac{\la\bsig,\bsig'\ra}{N}\in [-1,-\eps].
    \rt\}
    \]
    Then for $(\bsig,\bsig')\in\cS_N(2,[-1,-\eps])$, the sum $H_N(\bsig)+H_N(\bsig')$ is a centered Gaussian with 
    \[
    \frac{1}{N}\bbE\lt[\big(H_N(\bsig)+H_N(\bsig')\big)^2\rt]
    =
    2\xi(1)-2\xi(\la\bsig,\bsig'\ra/N)<2\xi(1)-2c.
    \]
    By Jensen's inequality at finite $N$, we have the annealed upper bound
    \begin{align*}
    \lim_{N\to\infty}
    \frac{1}{N}
    \bbE\log
    \int
    e^{\beta H_N(\bsig)+\beta H_N(\bsig')}
    1_{\cS_N(2,[-1,-\eps])}
    ~\de \mu_0^{\otimes 2}
    &\leq 
    \lim_{N\to\infty}
    \frac{1}{N}
    \log
    \int
    \bbE
    e^{\beta H_N(\bsig)+\beta H_N(\bsig')}
    1_{\cS_N(2,[-1,-\eps])}
    ~\de \mu_0^{\otimes 2}
    \\
    &\leq 
    \lim_{N\to\infty}
    \frac{1}{N}
    \log
    \int_{\cS_N^2}
    e^{N\beta^2\xi(1)-Nc\beta^2}
    ~\de \mu_0^{\otimes 2}
    \\
    &\leq 
    \beta^2\xi(1)-c\beta^2.
    \end{align*}
    Since $\beta<\beta_c$, standard free energy concentration implies with probability $1-o_N(1)$ over $\bG^{(p)}$, 
    \[
    \mu_{\beta}^{\otimes 2}(\cS_N(2,[-1,-\eps])\leq o_N(1).
    \]
    Combining with \eqref{eq:xi-zero-on-average} implies that the overlap also concentrates around $0$ for odd $\xi$.
\end{proof}

\begin{proposition}
\label{prop:OverlapChaos}
    Let $\xi$ be any mixture function.
    Let $\mu_{\beta}$, $\mu^{\eps_N}_{\beta}$ be the corresponding
    $p$-spin model Gibbs measures with $\eps_N$-correlated disorder
    (either spherical or Ising).
    Then for any $\beta<\beta_c(\xi)$ and $\eps_N\geq 0$, with $\bsig,\bsig'\sim\mu_{\beta}\otimes \mu^{\eps_N}_{\beta}$,
    we have
    \[
    \lim_{N\to\infty}
    \bbE[|\la\bsig,\bsig'\ra/N|^2]=0.
    \]
    Moreover, this limit holds uniformly in $\eps_N\in [0,1]$. 
\end{proposition}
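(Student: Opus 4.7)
The plan is a short domination argument that reduces the claim to Proposition~\ref{prop:RS-trivial-overlaps} via Cauchy--Schwarz in the disorder; the key observation is that the upper bound obtained will not depend on $\eps_N$, so that uniform convergence over $\eps_N\in[0,1]$ comes for free. Note that the endpoint cases are unrevealing on their own: at $\eps_N=0$ the claim is exactly Proposition~\ref{prop:RS-trivial-overlaps}, while at $\eps_N=1$ the two disorders are independent and a direct rotational-invariance argument gives $\bbE[(\la\bsig,\bsig'\ra/N)^2]=1/N$. The real content is the intermediate regime, which the argument below subsumes uniformly.

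First, I would condition on the independent Gaussian pair $(\bG,\bW)$ defining the correlated disorder $\bG^{\eps_N}=(1-\eps_N)\bG+\sqrt{2\eps_N-\eps_N^2}\bW$. Under this conditioning $\bsig$ and $\bsig'$ are independent, with laws $\mu_\beta=\mu_\beta(\bG)$ and $\mu_\beta^{\eps_N}=\mu_\beta(\bG^{\eps_N})$ respectively. Writing $M_{ij}(\bG):=\bbE_{\mu_\beta(\bG)}[\sigma_i\sigma_j]$, the tower property yields
\[
\bbE\bigl[(\la\bsig,\bsig'\ra/N)^2\bigr]
=\frac{1}{N^2}\sum_{i,j=1}^N\bbE_{\bG,\bW}\bigl[M_{ij}(\bG)\,M_{ij}(\bG^{\eps_N})\bigr].
\]
Since $\bG^{\eps_N}\stackrel{d}{=}\bG$ marginally (both are standard Gaussian tensors), Cauchy--Schwarz applied to each $(i,j)$ gives
\[
\bigl|\bbE[M_{ij}(\bG)\,M_{ij}(\bG^{\eps_N})]\bigr|
\le\sqrt{\bbE[M_{ij}(\bG)^2]\,\bbE[M_{ij}(\bG^{\eps_N})^2]}=\bbE[M_{ij}(\bG)^2],
\]
and summing recognizes
\[
\bbE\bigl[(\la\bsig,\bsig'\ra/N)^2\bigr]
\le\frac{1}{N^2}\sum_{i,j}\bbE[M_{ij}(\bG)^2]
=\bbE\bigl[(\la\bsig_1,\bsig_2\ra/N)^2\bigr],
\]
where $\bsig_1,\bsig_2$ are conditionally iid samples from $\mu_\beta(\bG)$, as seen by expanding $\la\bsig_1,\bsig_2\ra^2=\sum_{i,j}\sigma_{1,i}\sigma_{1,j}\sigma_{2,i}\sigma_{2,j}$ inside the inner expectation.

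Finally, Proposition~\ref{prop:RS-trivial-overlaps} (for the same mixture $\xi$ and $\beta<\beta_c(\xi)$, applicable in both the spherical and Ising settings) gives that the right-hand side tends to $0$. As the bound is independent of $\eps_N$, the convergence is uniform in $\eps_N\in[0,1]$, which is the desired conclusion.

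I do not anticipate any real obstacle. The only item to verify is finiteness of $\bbE[M_{ij}(\bG)^2]$ needed for Cauchy--Schwarz: this is immediate in the Ising case since $|M_{ij}|\le 1$, and in the spherical case follows from the crude bound $|M_{ij}|\le\bbE_{\mu_\beta}[|\sigma_i\sigma_j|]\le N$, so $\bbE[M_{ij}^2]<\infty$ at each fixed $N$. Since the dominating quantity $\frac{1}{N^2}\sum_{i,j}\bbE[M_{ij}^2]$ is exactly the two-replica overlap moment already shown to vanish, no further estimate is needed.
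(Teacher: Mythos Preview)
Your proof is correct and is essentially the same as the paper's: both write $\bbE[(\la\bsig,\bsig'\ra/N)^2]$ as the disorder-expectation of the Frobenius inner product between the Gibbs second-moment matrices of $\mu_\beta$ and $\mu_\beta^{\eps_N}$, then apply Cauchy--Schwarz together with $\bG^{\eps_N}\stackrel{d}{=}\bG$ to reduce to Proposition~\ref{prop:RS-trivial-overlaps}. The only difference is notational---you work entrywise with $M_{ij}$ while the paper writes the same thing as $\bbE\la \bbE_{\bx\sim\mu_\beta}[\bx^{\otimes 2}]/N,\bbE_{\bx'\sim\mu_\beta'}[\bx'^{\otimes 2}]/N\ra$---and your added remarks on finiteness and endpoint cases are sound but inessential.
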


\begin{proof}
    This follows  from Proposition~\ref{prop:RS-trivial-overlaps}    
    by writing, for $(\bsig,\bsig')\sim\mu_{\beta}\times \mu_{\beta}'$:
    \[
    \bbE\big[\big(\la\bsig,\bsig'\ra/N\big)^2\big]
    =
    \bbE
    \lt\la
    \bbE_{\bx\sim\mu_{\beta}} [\bx^{\otimes 2}]/N
    \,,\,
    \bbE_{\bx'\sim\mu_{\beta'}} [\bx'^{\otimes 2}]/N
    \rt\ra.
    \]
    Here we use the usual matrix inner product $\la A,B\ra=\Tr(A^{\top}B)$. Hence the extension of Proposition~\ref{prop:RS-trivial-overlaps} to Gibbs measures for different disorder is immediate by Cauchy--Schwarz.
\end{proof}

\footnotesize
\bibliographystyle{alpha}
\bibliography{bib}

\newcommand{\etalchar}[1]{$^{#1}$}
\begin{thebibliography}{KMRT{\etalchar{+}}07}

\bibitem[ACO08]{achlioptas2008algorithmic}
Dimitris Achlioptas and Amin Coja-Oghlan.
\newblock Algorithmic barriers from phase transitions.
\newblock In {\em 2008 49th Annual IEEE Symposium on Foundations of Computer
  Science}, pages 793--802. IEEE, 2008.

\bibitem[AMS22]{alaoui2022sampling}
Ahmed~El Alaoui, Andrea Montanari, and Mark Sellke.
\newblock Sampling from the sherrington-kirkpatrick gibbs measure via
  algorithmic stochastic localization.
\newblock In {\em 2022 IEEE 63rd Annual Symposium on Foundations of Computer
  Science (FOCS)}, pages 323--334. IEEE, 2022.

\bibitem[AMS23]{alaoui2023sampling}
Ahmed~El Alaoui, Andrea Montanari, and Mark Sellke.
\newblock Sampling from mean-field gibbs measures via diffusion processes.
\newblock {\em arXiv:2310.08912}, 2023.

\bibitem[ART06]{achlioptas2006solution}
Dimitris Achlioptas and Federico Ricci-Tersenghi.
\newblock On the solution-space geometry of random constraint satisfaction
  problems.
\newblock In {\em Proceedings of the thirty-eighth annual ACM symposium on
  Theory of computing}, pages 130--139, 2006.

\bibitem[Ban38]{banach1938homogene}
Stefan Banach.
\newblock {{\"U}ber homogene Polynome in $L^2$}.
\newblock {\em Studia Mathematica}, 7(1):36--44, 1938.

\bibitem[BDG06]{ben2006cugliandolo}
G{\'e}rard Ben\space{}Arous, Amir Dembo, and Alice Guionnet.
\newblock {Cugliandolo-Kurchan equations for dynamics of spin-glasses}.
\newblock {\em Probability theory and related fields}, 136(4):619--660, 2006.

\bibitem[BJ18]{ben2018spectral}
G{\'e}rard Ben\space{}Arous and Aukosh Jagannath.
\newblock Spectral gap estimates in mean field spin glasses.
\newblock {\em Communications in Mathematical Physics}, 361(1):1--52, 2018.

\bibitem[BJ24]{arous2021shattering}
G{\'e}rard Ben\space{}Arous and Aukosh Jagannath.
\newblock Shattering versus metastability in spin glasses.
\newblock {\em Communications on Pure and Applied Mathematics}, 77(1):139--176,
  2024.

\bibitem[BM87]{bray1987chaotic}
Alan~J Bray and Michael~A Moore.
\newblock Chaotic nature of the spin-glass phase.
\newblock {\em Physical review letters}, 58(1):57, 1987.

\bibitem[BMW00]{biroli2000variational}
Giulio Biroli, Remi Monasson, and Martin Weigt.
\newblock A variational description of the ground state structure in random
  satisfiability problems.
\newblock {\em The European Physical Journal B-Condensed Matter and Complex
  Systems}, 14(3):551--568, 2000.

\bibitem[CC05]{castellani2005spin}
Tommaso Castellani and Andrea Cavagna.
\newblock Spin-glass theory for pedestrians.
\newblock {\em Journal of Statistical Mechanics: Theory and Experiment},
  2005(05):P05012, 2005.

\bibitem[Cel24]{celentano2022sudakov}
Michael Celentano.
\newblock {Sudakov--Fernique post-AMP, and a new proof of the local convexity
  of the TAP free energy}.
\newblock {\em The Annals of Probability}, 52(3):923--954, 2024.

\bibitem[Cha09]{chatterjee2009disorder}
Sourav Chatterjee.
\newblock Disorder chaos and multiple valleys in spin glasses.
\newblock {\em arXiv preprint arXiv:0907.3381}, 2009.

\bibitem[Che13]{chen2013aizenman}
Wei-Kuo Chen.
\newblock {The Aizenman-Sims-Starr scheme and Parisi formula for mixed $p$-spin
  spherical models}.
\newblock {\em Electronic Journal of Probability}, 18, 2013.

\bibitem[Che19]{chen2019phase}
Wei-Kuo Chen.
\newblock {Phase transition in the spiked random tensor with Rademacher prior}.
\newblock {\em The Annals of Statistics}, 47(5):2734--2756, 2019.

\bibitem[CHL21]{chen2021phase}
Wei-Kuo Chen, Madeline Handschy, and Gilad Lerman.
\newblock Phase transition in random tensors with multiple independent spikes.
\newblock {\em The Annals of Applied Probability}, 31(4):1868--1913, 2021.

\bibitem[CHS93]{crisanti1993spherical}
Andrea Crisanti, Heinz Horner, and H~J Sommers.
\newblock {The spherical $p$-spin interaction spin-glass model: the dynamics}.
\newblock {\em Zeitschrift f{\"u}r Physik B Condensed Matter}, 92:257--271,
  1993.

\bibitem[CK93]{cugliandolo1993analytical}
Leticia~F Cugliandolo and Jorge Kurchan.
\newblock Analytical solution of the off-equilibrium dynamics of a long-range
  spin-glass model.
\newblock {\em Physical Review Letters}, 71(1):173, 1993.

\bibitem[CK94]{cugliandolo1994out}
Leticia~F. Cugliandolo and Jorge Kurchan.
\newblock {On the out-of-equilibrium relaxation of the Sherrington-Kirkpatrick
  model}.
\newblock {\em Journal of Physics A: Mathematical and General}, 27(17):5749,
  1994.

\bibitem[CS92]{crisanti1992spherical}
Andrea Crisanti and H-J Sommers.
\newblock The spherical $p$-spin interaction spin glass model: the statics.
\newblock {\em Zeitschrift f{\"u}r Physik B Condensed Matter}, 87(3):341--354,
  1992.

\bibitem[CS95]{crisanti1995thouless}
Andrea Crisanti and H-J Sommers.
\newblock {Thouless-Anderson-Palmer approach to the spherical p-spin spin glass
  model}.
\newblock {\em Journal de Physique I}, 5(7):805--813, 1995.

\bibitem[Cug04]{cugliandolo2004course}
Leticia~F Cugliandolo.
\newblock Course 7: Dynamics of glassy systems.
\newblock In {\em Slow Relaxations and nonequilibrium dynamics in condensed
  matter: Les Houches Session LXXVII, 1-26 July, 2002}, pages 367--521.
  Springer, 2004.

\bibitem[FFRT20]{folena2020rethinking}
Giampaolo Folena, Silvio Franz, and Federico Ricci-Tersenghi.
\newblock Rethinking mean-field glassy dynamics and its relation with the
  energy landscape: The surprising case of the spherical mixed p-spin model.
\newblock {\em Physical Review X}, 10(3):031045, 2020.

\bibitem[FP95]{franz1995recipes}
Silvio Franz and Giorgio Parisi.
\newblock Recipes for metastable states in spin glasses.
\newblock {\em Journal de Physique I}, 5(11):1401--1415, 1995.

\bibitem[GJ21]{gamarnik2019overlap}
David Gamarnik and Aukosh Jagannath.
\newblock The overlap gap property and approximate message passing algorithms
  for $ p $-spin models.
\newblock {\em The Annals of Probability}, 49(1):180--205, 2021.

\bibitem[GJK23]{gamarnik2023shattering}
David Gamarnik, Aukosh Jagannath, and Eren~C K{\i}z{\i}lda{\u{g}}.
\newblock {Shattering in the Ising Pure $p$-Spin Model}.
\newblock {\em arXiv preprint arXiv:2307.07461}, 2023.

\bibitem[GJW20]{gamarnik2020optimization}
David Gamarnik, Aukosh Jagannath, and Alexander~S. Wein.
\newblock Low-degree hardness of random optimization problems.
\newblock In {\em Proceedings of 61st FOCS}, pages 131--140. IEEE, 2020.

\bibitem[HMP24]{huang2024sampling}
Brice Huang, Andrea Montanari, and Huy~Tuan Pham.
\newblock {Sampling from Spherical Spin Glasses in Total Variation via
  Algorithmic Stochastic Localization}.
\newblock {\em arXiv preprint arXiv:2404.15651}, 2024.

\bibitem[HS23]{huang2023algorithmic}
Brice Huang and Mark Sellke.
\newblock Algorithmic threshold for multi-species spherical spin glasses.
\newblock {\em arXiv preprint arXiv:2303.12172}, 2023.

\bibitem[HS24]{huang2021tight}
Brice Huang and Mark Sellke.
\newblock {Tight Lipschitz hardness for optimizing mean field spin glasses}.
\newblock {\em Communications on Pure and Applied Mathematics}, 2024.

\bibitem[JLM20]{jagannath2020statistical}
Aukosh Jagannath, Patrick Lopatto, and L{\'e}o Miolane.
\newblock {Statistical thresholds for tensor PCA}.
\newblock {\em Annals of Applied Probability}, 30(4):1910--1933, 2020.

\bibitem[JT18]{jagannath2018bounds}
Aukosh Jagannath and Ian Tobasco.
\newblock Bounds on the complexity of replica symmetry breaking for spherical
  spin glasses.
\newblock {\em Proceedings of the American Mathematical Society},
  146(7):3127--3142, 2018.

\bibitem[KB05]{krzkakala2005disorder}
F~Krz{\k{a}}ka{\l}a and J-P Bouchaud.
\newblock Disorder chaos in spin glasses.
\newblock {\em Europhysics Letters}, 72(3):472, 2005.

\bibitem[K{\i}z23]{kizildag2023reply}
Eren~C K{\i}z{\i}lda{\u{g}}.
\newblock {Personal Communication}.
\newblock 2023.

\bibitem[KMRT{\etalchar{+}}07]{krzakala2007gibbs}
Florent Krzaka{\l}a, Andrea Montanari, Federico Ricci-Tersenghi, Guilhem
  Semerjian, and Lenka Zdeborov{\'a}.
\newblock Gibbs states and the set of solutions of random constraint
  satisfaction problems.
\newblock {\em Proceedings of the National Academy of Sciences},
  104(25):10318--10323, 2007.

\bibitem[KPV93]{kurchan1993barriers}
Jorge Kurchan, Giorgio Parisi, and Miguel~Angel Virasoro.
\newblock Barriers and metastable states as saddle points in the replica
  approach.
\newblock {\em Journal de Physique I}, 3(8):1819--1838, 1993.

\bibitem[MMZ05]{mezard2005clustering}
Marc M{\'e}zard, Thierry Mora, and Riccardo Zecchina.
\newblock Clustering of solutions in the random satisfiability problem.
\newblock {\em Physical Review Letters}, 94(19):197205, 2005.

\bibitem[Mon95]{monasson1995structural}
R{\'e}mi Monasson.
\newblock Structural glass transition and the entropy of the metastable states.
\newblock {\em Physical review letters}, 75(15):2847, 1995.

\bibitem[MPZ02]{mezard2002analytic}
Marc M{\'e}zard, Giorgio Parisi, and Riccardo Zecchina.
\newblock Analytic and algorithmic solution of random satisfiability problems.
\newblock {\em Science}, 297(5582):812--815, 2002.

\bibitem[MRZ16]{montanari2015limitation}
Andrea Montanari, Daniel Reichman, and Ofer Zeitouni.
\newblock On the limitation of spectral methods: From the gaussian hidden
  clique problem to rank one perturbations of gaussian tensors.
\newblock {\em IEEE Transactions on Information Theory}, 63(3):1572--1579,
  2016.

\bibitem[MS02]{milgrom2002envelope}
Paul Milgrom and Ilya Segal.
\newblock Envelope theorems for arbitrary choice sets.
\newblock {\em Econometrica}, 70(2):583--601, 2002.

\bibitem[MS06]{montanari2006rigorous}
Andrea Montanari and Guilhem Semerjian.
\newblock Rigorous inequalities between length and time scales in glassy
  systems.
\newblock {\em Journal of statistical physics}, 125:23--54, 2006.

\bibitem[Pan13]{panchenko2013sherrington}
Dmitry Panchenko.
\newblock {\em {The Sherrington-Kirkpatrick model}}.
\newblock Springer Science \& Business Media, 2013.

\bibitem[RM14]{richard2014statistical}
Emile Richard and Andrea Montanari.
\newblock {A statistical model for tensor PCA}.
\newblock {\em Advances in neural information processing systems}, 27, 2014.

\bibitem[Sel23]{sellke2023free}
Mark Sellke.
\newblock {Free energy subadditivity for symmetric random Hamiltonians}.
\newblock {\em Journal of Mathematical Physics}, 64(4), 2023.

\bibitem[Sel24]{sellke2023threshold}
Mark Sellke.
\newblock {The threshold energy of low temperature Langevin dynamics for pure
  spherical spin glasses}.
\newblock {\em Communications on Pure and Applied Mathematics}, 2024.

\bibitem[Sub17]{subag2017geometry}
Eliran Subag.
\newblock {The Geometry of the Gibbs Measure of Pure Spherical Spin Glasses}.
\newblock {\em Inventiones mathematicae}, 210(1):135--209, 2017.

\bibitem[Tal06a]{talagrand2006free}
Michel Talagrand.
\newblock Free energy of the spherical mean field model.
\newblock {\em Probability theory and related fields}, 134(3):339--382, 2006.

\bibitem[Tal06b]{talagrand2006parisimeasures}
Michel Talagrand.
\newblock Parisi measures.
\newblock {\em Journal of Functional Analysis}, 231(2):269--286, 2006.

\end{thebibliography}

\end{document}